\begin{document}
\title{Free precategories as presheaf categories}
\author[1]{Simon Forest\thanks{The work of this author
    was partially supported by the French ANR project PPS (ANR-19-CE48-0014).}}
\author[2]{Samuel Mimram}
\affil[1]{Aix-Marseille Univ, CNRS, I2M, Marseille, France}
\affil[2]{LIX, École Polytechnique, Palaiseau, France}
\renewcommand\Affilfont{\small}

\maketitle
\begin{abstract}
  Precategories generalize both the notions of strict $n$-category and
  sesquicategory: their definition is essentially the same as the one of strict
  $n$-categories, excepting that we do not require the various interchange laws
  to hold. Those have been proposed as a framework in which one can express
  semi-strict definitions of weak higher categories: in dimension 3, Gray
  categories are an instance of them and have been shown to be equivalent to
  tricategories, and definitions of semi-strict tetracategories have been
  proposed, and used as the basis of proof assistants such as Globular.
  In this article, we are mostly interested in free precategories. Those can be
  presented by generators and relations, using an appropriate variation on the
  notion of polygraph (aka computad), and earlier works have shown that the
  theory of rewriting can be generalized to this setting, enjoying most of the
  fundamental constructions and properties which can be found in the traditional
  theory, contrarily to polygraphs for strict categories.
  We further study here why this is the case, by providing several results which
  show that precategories and their associated polygraphs bear properties which
  ensure that we have a good syntax for those. In particular, we show that the
  category of polygraphs for precategories form a presheaf category.
\end{abstract}


\newpage
\tableofcontents
\newpage

\section*{Introduction}

\paragraph{Strict polygraphs.}
The notion of \emph{polygraph}, also known as \emph{computad}, was introduced by
Street~\cite{street1976limits} and Burroni~\cite{burroni1993higher} as a
generalization of the notion of presentation for strict $n$-categories, thus
extending the now classical notions of presentation for groups and monoids
introduced by Dehn~\cite{dehn1911unendliche} and
Thue~\cite{thue1914probleme}. From an algebraic point of view, they constitute
the right notion of ``free $n$-category'', in the sense that they have been
established as being the cofibrant objects in the folk model structure on the
category of $n$-categories~\cite{metayer2008cofibrant,lafont2010folk}. They thus
allow for computing various invariants of categories, as well as showing
coherence theorems, based on the construction of resolutions (or cofibrant
replacements) of categories of interest. For this reason, one is often
interested in constructing coherent presentations of low-dimensional categories,
which are polygraphs whose underlying free category is suitably equivalent to
the original one.

In order to be able to perform practical computations, one is generally looking
for polygraphs which are as small as possible. This task can be often be
achieved by using techniques originating from rewriting
theory~\cite{baader1999term,bezem2003term}, suitably generalized to this
setting, which exploits the orientation of relations in a presentation. Namely,
when the presentation is terminating and confluent, generators corresponding to
relations between relations can be found as confluence diagrams for critical
branchings. This idea originates in the works of Squier on presented
monoids~\cite{squier1987word,squier1994finiteness,lafont1995new} and has been
the starting point of a series of works exploring higher dimensional
rewriting~\cite{guiraud2009higher, guiraud2012coherence, malbos2013homotopical,
  gaussent2015coherent}, which has since then been further generalized to
various algebraic structures such as term rewriting
systems~\cite{malbos2016homological}, algebras~\cite{guiraud2019convergent} or
operads~\cite{malbos2021completion}.
While polygraphs have thus been proved to be quite a useful tool, they are still
quite unsatisfactory on many aspects.

\paragraph{Limitations of strict polygraphs.}
From a categorical point of view, strict polygraphs are adapted to strict $n$-categories, but
those are known not to be equivalent to weak $n$-categories, which are the real
objects of interest. Namely, already starting from dimension $3$, not every
tricategory is equivalent to a $3$-category: the best we can do is to strictify
associativity and unitality, and show that every tricategory is equivalent to a
Gray category~\cite{gordon1995coherence} (we should underline here that this is
not the only possible partial strictification~\cite{joyal2006weak}). Following
our terminology, a Gray category is a $3$-precategory equipped with interchange
isomorphisms satisfying suitable axioms.
Another categorical defect of polygraphs is the fact that they do not form a
presheaf category. It is namely noted in~\cite{carboni2004corrigenda} that this
cannot be the case because of ``the lack of an ordering'' of $2$-dimensional
(and higher) cells, since composition is commutative for $2$-cells with identity
source and target. More formally, an abstract explanation of the fact that
polygraphs do not form a presheaf category can be found in~\cite{makkai2005word} and
an elementary proof of this fact can be found in~\cite{cheng2012direct}. One route to
solve this consists in restricting to polygraph where generators do not have
identity sources (or targets), which has successfully been explored by
Henry~\cite{henry2018nonunital,henry2018regular}. Our exploration consists here in
taking the other route and ``add ordering'' to morphisms.

From a rewriting point and computer science point of view, polygraphs, when
considered as rewriting systems, lack a fundamental property found in most
settings for rewriting: we expect that a finite rewriting system has a finite number of
critical branchings. This was first observed by Lafont~\cite{lafont2003towards}
and further studied by Guiraud and Malbos who showed that, because of this,
there are finite convergent $3$-polygraphs without finite derivation
type~\cite{guiraud2009higher}.
From a practical perspective, this causes problems. Namely, representing the
possibly infinite families of critical branchings is a difficult challenge, even
in low dimensions~\cite{mimram2014towards}. But in fact, even providing a
concrete representation of morphisms is a challenge, because there is no
canonical representative of morphisms in free categories, up to the axioms of
strict $n$-categories.

\paragraph{Polygraphs for precategories.}
For these reasons, it seems natural to investigate the framework of
\emph{$n$-precategories} whose definition is similar to the one of strict
$n$-categories, excepting that we do not require the interchange laws to
hold. In particular, in dimension~$2$, those correspond to Street's
\emph{sesquicategories}~\cite{street1996categorical}.
We have defined in~\cite{forest2018coherence,forest2021rewriting} an associated
notion of polygraph and developed a theory of rewriting in this setting
(interestingly, Ara{\'u}jo has recently independently come up with a very
similar notion~\cite{araujo2022computads}). It seems that, in this setting, most
of the limitations mentioned above vanish.
First, we now have canonical representatives of morphisms in free
$n$-precategories~\cite{forest2021rewriting}, a property which was first
observed by Makkai while studying strict
$n$-categories~\cite[Section~8]{makkai2005word}, which makes them suitable for
implementing software performing computation on morphisms. For this reason, they
are also used internally in the \emph{Globular} graphical proof
assistant~\cite{vicary2018globular,bar2017data}.
Second, a finite rewriting system has a finite number of critical branchings,
and those can be computed effectively.
Third, we have a hope of being able to deal with weak higher-categories in this
setting. Namely, we have already mentioned that Gray categories are equivalent
to tricategories and are particular $3$-precategories, and putative definitions
of semistrict $4$-categories based on $4$-precategories have been
proposed~\cite{bar2017data}. Note that the polygraph corresponding to a Gray
category is almost never finite, but the infinite families of generators we add
are regular enough to be dealt with in a uniform
way~\cite{forest2018coherence,forest2021rewriting}.

\paragraph{Properties of polygraphs.}
In this article, we further study of the category of $n$-polygraphs for
$n$-precategories. Most importantly, we show that they form a presheaf
category. Our proof is based on the characterization of concrete presheaf
categories given by Makkai~\cite{makkai2005word}. Simultaneously and
independently, another proof of this result has been given by
Ara{\'u}jo~\cite{araujo2022computads}. We should also mention that a notion of
polygraph for weak categories has been developed and shown to be a presheaf
category in~\cite{dean2022computads}. Our approach gives rise to much smaller
polygraphs and thus more amenable computations, although it is not entirely
clear (yet) how to encode weak $n$-categories in our setting, excepting in low
dimensions.

\paragraph{Plan of the paper.}
We begin by introducing precategories and associated polygraphs
(\Cref{sec:precat}) and show that functors between precategories induced by
polygraphs have the important property of being Conduché (\Cref{sec:conduche}),
which is used subsequently. Most of the remainder of the paper is devoted to
showing that polygraphs form a presheaf category. Our proof is based on Makkai's
theorem characterizing presheaf categories (recalled in \Cref{sec:makkai}). In
order to make computations on cells in free precategories, it is useful to
consider their support (\Cref{sec:support}). These allow defining and studying
polyplexes (\Cref{sec:polyplex}) which are shapes parametrizing compositions in
precategories. This finally allows us to show that polygraphs form a presheaf
category (\Cref{sec:presheaf}). As a nice by-product, we derive a parametric
adjunction together with an associated generic-free factorization for
precategories, which gives a more conceptual view of the good syntactical
properties of precategories (\Cref{sec:parametricity}). Finally, we leave two
open questions on homotopical aspects of polygraphs of precategories. First,
whether polygraphs are the cofibrant objects for a reasonable model structure on
precategories (we explain that the usual proof for strict categories does not
immediately generalize to precategories), and second, whether the presheaf
category of polygraphs is able to model homotopy types (we explain why the proof
used by Henry for regular plexes~\cite{henry2018regular} does not adapt here)
(\Cref{sec:folk}).

\section{Precategories and their polygraphs}
\label{sec:precat}
We recall here the definition of $n$\precategories as algebras over globular
sets, as well as their elementary properties. We also recall the associated
notion of polygraph, introduced in earlier
works~\cite{forest2018coherence,forest2021rewriting}, which is a particular
instance of the very general notion of polygraph associated to a monad on
globular sets introduced by Batanin~\cite{batanin1998computads}.

The notion of precategory was first introduced by Street, in dimension $2$,
under the name of \emph{sesquicategory}: this means a
``$1\text{\textonehalf}$-category'', since sesquicategories have more structure
than $1$-categories, but less than $2$-categories (they lack the interchange
law). The general definition of precategories was (implicitly) given by Makkai
in~\cite[Section~8]{makkai2005word}, who used them to deal with the word problem
for free strict categories. Later, they were used as data structures for the
Globular proof assistant~\cite{bar2016globular} and more recently for studying
coherent presentations of Gray categories in \cite{forest2021rewriting} and
coherence for adjunctions~\cite{araujo2022simple,araujo2022coherence}.

In the following, given $n\in\N$, we write $\Nlt n$ for the subset
$\set{0,\ldots,n-1}$ of $\N$, and $\Nle n$ for~$\Nlt{n+1}$.

\paragraph{Globular sets.}
Given~$n\in\N\cup \set\omega$, an \index{globular set}\emph{$n$-globular
  set}~$(X,\csrc,\ctgt)$ (often simply denoted~$X$) is the data of sets~$X_k$
for~$k\in \Nle n$ together with \glossary(dround){$\csrc_i,\ctgt_i$}{the source
  and target operations of a globular set}functions~$\csrc_i,\ctgt_i \co X_{i+1}
\to X_i$ for~$i \in \Nlt{n}$ as in
\[
  \begin{tikzcd}
    X_0
    &
    \ar[l,shift right,"\csrc_0"']
    \ar[l,shift left,"\ctgt_0"]
    X_1
    &
    \ar[l,shift right,"\csrc_1"']
    \ar[l,shift left,"\ctgt_1"]
    X_2
    &
    \ar[l,shift right,"\csrc_2"']
    \ar[l,shift left,"\ctgt_2"]
    \cdots
    &
    \ar[l,shift right,"\csrc_{k-1}"']
    \ar[l,shift left,"\ctgt_{k-1}"]
    X_k
    &
    \ar[l,shift right,"\csrc_{k}"']
    \ar[l,shift left,"\ctgt_{k}"]
    X_{k+1}
    &
    \ar[l,shift right,"\csrc_{k+1}"']
    \ar[l,shift left,"\ctgt_{k+1}"]
    \cdots
  \end{tikzcd}
\]
such that
\begin{align*}
  \csrc_i\circ \csrc_{i+1}&=\csrc_i\circ \ctgt_{i+1}
  &
  \ctgt_i\circ \csrc_{i+1}&=\ctgt_i\circ \ctgt_{i+1}
\end{align*}
for~$i \in \Nlt{n}$.
When there is no ambiguity on~$i$, we often write~$\csrc$ and~$\ctgt$
for~$\csrc_i$ and~$\ctgt_i$ respectively. An element~$u$ of~$X_i$ is called an
\index{globe}\emph{$i$-globe} of~$X$ and, for~$i>0$, the globes~$\csrc_{i-1}(u)$
and~$\ctgt_{i-1}(u)$ are respectively called the
\index{source!of a globe}\emph{source} and
\index{target!of a globe}\emph{target} and~$u$. Given $n$\globular sets~$X$
and~$Y$, a
\index{morphism!of globular sets}\emph{morphism} of $n$\globular sets
between~$X$ and~$Y$ is a family of functions~$F = (F_k\co X_k\to Y_k)_{k \in
  \Nle n}$, such that
\[
  \csrc_i\circ F_{i+1}=F_i\circ \csrc_i
\]
for~$i \in \Nlt{n}$.
We \glossary(Glob){$\nGlob n$}{the category of $n$\globular sets}write~$\nGlob
n$ for the category of $n$\globular sets. We have canonical truncation and
inclusion functors
\[
  \gtruncf_n \co \nGlob{n+1} \to \nGlob n
  \qqand
  \gincf_n \co \nGlob n \to \nGlob{n+1}
\]
which respectively forget the $(n{+}1)$\globes and add an empty set of
$(n{+}1)$\globes. They organize into an adjunction $\gincf_{n+1} \dashv
\gtruncf_n$.
It is direct from definition that globular sets are the models of an
(essentially) algebraic theory, so that the category~$\nGlob n$ is essentially
algebraic. In particular, it implies that it is locally finitely presentable,
complete and cocomplete~\cite{adamek1994locally}.

%

For~$\eps\in\set{-,+}$ and~$j\geq 0$ with $j \le n-i$, we define a morphism
$\csrctgt\eps_{i,j}:X_{i+j}\to X_i$ by
\[
  \csrctgt\eps_{i,j}=\csrctgt\eps_{i}\circ\csrctgt\eps_{i+1}\circ\cdots\circ\csrctgt\eps_{i+j-1}
\]
called the \index{source!iterated}\emph{iterated source} (\resp
\index{target!iterated}\emph{target}) operation when~$\eps=-$ (\resp
~$\eps=+$). We generally omit the index~$j$ when there is no ambiguity and
simply write~$\csrctgt\eps_i(u)$ for~$\csrctgt\eps_{i,j}(u)$.
Given~$i,k,l\in\Nle n$ with~$i < \min(k,l)$, we
\glossary(.abXkxXl){$X_k\times_i X_l$}{the set of pairs of $i$\composable~$k$-
  and $l$\globes}write~$X_k\times_i X_l$ for the pullback
\[
  \begin{tikzcd}
    X_k\times_i X_l \ar[r,dotted] \ar[d,dotted]
    \ar[rd,phantom,"\lrcorner",very near start]& X_l\ar[d,"\csrc_i"]\\
    X_k\ar[r,"\ctgt_i"']& X_i  \pbox.
  \end{tikzcd}
\]
Given~$p\in\N$ and~$k_0,\ldots,k_p \in \Nle n$, a sequence of
globes~$u_0 \in X_{k_0}, \ldots, u_p \in X_{k_p}$ is said
\index{composable}\emph{$i$\composable} for some~$i < \min(k_0,\ldots,k_p)$,
when~$\ctgt_{i}(u_j) = \csrc_i(u_{j+1})$ for~$j \in \Nlt{p}$.
Given~$k \in \Nle n$ and~$u,v \in X_{k}$,~$u$ and~$v$ are said
\index{parallel}\emph{parallel} when~${k = 0}$
or~$\csrctgt\eps_{k-1}(u) = \csrctgt\eps_{k-1}(v)$ for~${\eps \in
  \set{-,+}}$. In order to avoid dealing with the side condition~$k = 0$, we use
the convention that~$X_{-1}$ is the terminal set~$\set{\ast}$ and
that~$\csrc_{-1},\ctgt_{-1}$ are the unique function~$X_0 \to X_{-1}$.

\paragraph{Precategories.}
Given~$n\in\N \cup \set\omega$, an \index{precategory}\emph{$n$\precategory}~$C$
is an $n$\globular set (whose $k$\globes are called \emph{$k$\cells} in this
context) together with, for~$k \in \Nlt{n}$, identity
\glossary(idabpcat){$\unitp k {}$}{the identity operation for
  precategories}operations
\[
  \unitp{k+1} {}\colon C_{k}\to C_{k+1}
\]
for which we use the same notation conventions than the identity operations on
strict categories,
and, for~$k,l \in \N^*_n$, composition \glossary(.cab){$\pcomp_i$}{the
  composition operation for precategories}operations
\[
  \pcomp_{k,l}\co C_k\times_{\min(k,l)-1}C_l\to C_{\max(k,l)}
\]
which satisfy the axioms below.
Given~$i,k,l \in \Nle n$ with~${i = \min(k,l)}$, since the dimensions of the
cells determine the indices of the composition to be used, we often
write~$\pcomp_i$ for~$\pcomp_{k,l}$. In this way, we still make explicit the
most important information which is the dimension~$i$ of composition.
The axioms of $n$\precategories are the following:
\begin{enumerate}[labelwidth=26.26pt,leftmargin=!,label=(P-\roman*),ref=(P-\roman*)]
\item \label{precat:first}\label{precat:src-tgt-unit}for~$k \in \Nlt{n}$ and~$u\in C_k$,
  \[
    \csrc_k(\unitp {k+1}{u})=u=\ctgt_k(\unitp {k+1}{u}),
  \]
\item \label{precat:csrc-tgt}for~$i,k,l \in \Nle n$ such that~$i = \min(k,l) -1$,~$(u,v)\in C_k\times_iC_l$, and~${\eps \in \set{-,+}}$,
  \begin{align*}
    \csrctgt\eps(u \pcomp_i v)
    &=
      \begin{cases}
        u\pcomp_i \csrctgt\eps(v)& \text{if~$k < l$,}\\
        \csrc(u)&\text{if~$k=l$ and~$\eps = -$,}\\
        \ctgt(v)&\text{if~$k=l$ and~$\eps = +$,}\\
        \csrctgt\eps(u)\pcomp_i v&\text{if~$k>l$,}
      \end{cases}
  \end{align*}
\item \label{precat:compat-id-comp}for~$i,k,l \in \Nle n$ with~$i = \min(k,l) -
  1$, given~$(u,v)\in C_{k-1}\times_{i}C_l$,
  \begin{align*}
    \unit u\pcomp_i v
    &=
      \begin{cases}
        v&\text{if~$k \le l$,}\\
        \unit{u\pcomp_i v}&\text{if~$k > l$,}
      \end{cases}
                            \shortintertext{and, given~$(u,v) \in C_k \times_i C_{l-1}$,}
                            u\pcomp_i\unit{v}
         &=
           \begin{cases}
             u&\text{if~$l\le k$,}\\
             \unit{u\pcomp_i v}&\text{if~$l > k$,}
           \end{cases}
  \end{align*}

\item \label{precat:before-last} \label{precat:assoc}for~$i,k,l,m \in \Nle n$ with~$i = \min(k,l) - 1 =
  \min(l,m) - 1$, and~$u \in C_k$,~$v \in C_l$ and~$w \in C_w$ such that~$u,v,w$
  are $i$\composable,
  \[
    (u\pcomp_iv)\pcomp_iw
    =
    u\pcomp_i(v\pcomp_iw),
  \]
\item \label{precat:distrib}\label{precat:last}for~$i,j,k,l,l' \in \Nle n$ such that
  \[
    i = \min(k,\max(l,l')) - 1,
    \qquad
    j = \min(l,l') - 1
    \qqtand
    i < j\zbox,
  \]
  given~$u \in C_k$ and~$(v,v') \in C_l \times_j C_{l'}$ such that~$u,v$ are $i$\composable,
  \[
    u \pcomp_i (v \pcomp_j v') = (u \pcomp_i v) \pcomp_j (u \pcomp_i v')
  \]
  and, given~$(u,u') \in C_l \times_j C_{l'}$ and~$v \in C_k$ such that~$u,v$
  are $i$\composable,
  \[
    (u \pcomp_j u') \pcomp_i v = (u \pcomp_i v) \pcomp_j (u' \pcomp_i v).
  \]
\end{enumerate}

Note that, provided that the Axioms~\ref{precat:first}
to~\ref{precat:before-last} are satisfied, \Axr{precat:distrib} can be shown
equivalent to the more symmetrical axiom
\begin{enumerate}[labelwidth=26.26pt,leftmargin=!,label=\textup{\ref{precat:distrib}'},ref=\textup{\ref{precat:distrib}'}]
\item for every~$i,j,k \in \Nle{n}$ satisfying~$i < j < k$, and
  cells~$u_1,u_2 \in C_{i+1}$,~$v_1,v_2 \in C_{j+1}$ and~$w \in C_k$ such
  that~$u_1,w,u_2$ are $i$\composable and~$v_1,w,v_2$ are $j$\composable, we
  have
  \[
    u_1 \pcomp_i (v_1 \pcomp_j w \pcomp_j v_2) \pcomp_i  u_2 = (u_1 \pcomp_i
    v_1 \pcomp_i u_2) \pcomp_j (u_1
    \pcomp_i w \pcomp_i u_2) \pcomp_j (u_1 \pcomp_i v_2 \pcomp_i u_2)\zbox.
  \]
\end{enumerate}

\begin{example}
  Given a $2$\precategory~$C$ with two $2$\cells~$\phi$ and~$\psi$ as in
  \[
    \begin{tikzcd}
      x
      \ar[r,bend left=60,"f",""{auto=false,name=topl}]
      \ar[r,bend right=60,"{f'}"'{pos=0.52},""{auto=false,name=botl}]
      &
      y
      \ar[r,bend left=60,"g",""{auto=false,name=topr}]
      \ar[r,bend right=60,"{g'}"',""{auto=false,name=botr}]
      &
      z
      \ar[from=topl,to=botl,phantom,"\Downarrow\phi"]
      \ar[from=topr,to=botr,phantom,"\Downarrow\psi"]
    \end{tikzcd}
  \]
  there are two ways to compose~$\phi$ and~$\psi$ together, given by
  \[
    (\phi \pcomp_0 g) \pcomp_1 (f' \pcomp_0 \psi)
    \qtand
    (f \pcomp_0 \psi) \pcomp_1 (\phi \pcomp_0 g')
  \]
  that can be represented using string diagrams by
  \[
    \satex{wires-left-right}
    \quad\qqtand\quad
    \satex{wires-right-left}
  \]
  and these two composites are not expected to be equal in~$C$. Moreover, by our
  definition of precategories, there is no such thing as a valid cell~$\phi
  \pcomp_0 \psi$, and the string diagram
  \[
    \satex{wires-mid-mid}
  \]
  makes no sense in this setting.
\end{example}

Given two $n$\precategories~$C$ and~$D$, a \index{morphism!of
  precategories}\emph{morphism} of $n$-precategories (or
\index{prefunctor@$n$\prefunctor}\emph{$n$\prefunctor}) between~$C$ and~$D$ is a
morphism of $n$\globular sets~$F\co C \to D$ such that
\begin{itemize}
\item $F(\unitp {k+1} u) = \unitp {k+1} {F(u)}$ for~$k \in \Nlt{n}$ and~$u \in C_k$,
  
\item $F(u \pcomp_i v) = F(u) \pcomp_i F(v)$ for~$i,k,l \in \Nle n$ with~$i =
  \min(k,l) - 1$ and~${(u,v) \in C_k \times_i C_l}$.
\end{itemize}
We \glossary(PCatn){$\nPCat n$}{the category of $n$\precategories}write~$\nPCat
n$ for the category of $n$\precategories thus defined. We have canonical
truncation and inclusion functors
\[
  \pctruncf_n \co \nPCat{n+1} \to \nPCat n
  \qqand
  \pcincf_n \co \nPCat n \to \nPCat{n+1}
\]
which respectively forget the $(n{+}1)$\cells and add a set of $(n{+}1)$\cells
consisting of formal identities of $n$\cells. They organize into an adjunction
$\pcincf_{n+1} \dashv \pctruncf_n$.

\paragraph{The globular monad of $n$-precategories.}
The above definition of $n$\precategories directly translates into an
essentially algebraic theory so that the category $\nPCat n$ is locally finitely
presentable~\cite{adamek1994locally}.
There is a forgetful functor
\[
  \ccfgff_n \co \nPCat n \to \nGlob n
\]
which maps an $n$\precategory to its underlying $n$\globular set, and this
functor is induced by the inclusion of the essentially algebraic theory of
$n$-globular sets into the one of $n$-precategories. We thus have the
following~\cite[Proposition~1.4.2.4]{forest:tel-03155192}:

\begin{prop}
  \label{prop:pcatn-complete-cocomplete-ra}
  The category~$\nPCat n$ is locally finitely presentable, complete and
  cocomplete. Moreover, the functor~$\ccfgff_n$ is a right adjoint which
  preserves directed colimits.
\end{prop}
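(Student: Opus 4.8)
The plan is to exploit the fact, noted just above the statement, that $n$\precategories are precisely the models of an essentially algebraic theory, obtained from the theory of $n$\globular sets by adjoining the total unary operations $\unitp{k+1}{}$, the \emph{partial} binary operations $\pcomp_{k,l}$ whose domains of definition $C_k\times_i C_l$ are finite limits over the basic sorts, together with the finitely many finitary equations \ref{precat:first}--\ref{precat:last}. Since the category of models of an essentially algebraic theory is locally finitely presentable~\cite{adamek1994locally}, and every locally finitely presentable category is complete and cocomplete, the first assertion follows at once for $\nPCat n$.

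For the second assertion, the forgetful functor $\ccfgff_n$ is the one induced along the inclusion of the theory of $n$\globular sets into that of $n$\precategories. Such a morphism of essentially algebraic theories induces, besides $\ccfgff_n$, a left adjoint given by the ``free $n$\precategory on an $n$\globular set'' construction, so $\ccfgff_n$ is a right adjoint. (Alternatively: $\ccfgff_n$ preserves limits, since limits in $\nPCat n$ are computed on underlying $n$\globular sets with the unit and composition operations defined componentwise and the axioms inherited factorwise, using that limits preserve the composability pullbacks; being a limit-preserving accessible functor between locally finitely presentable categories, $\ccfgff_n$ then has a left adjoint by the adjoint functor theorem for such categories~\cite{adamek1994locally}.)

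It remains to show that $\ccfgff_n$ preserves directed (equivalently, filtered) colimits, and in fact I would show that it \emph{creates} them. Given a filtered diagram $(C^j)_j$ in $\nPCat n$, form the colimit $X := \operatorname{colim}_j \ccfgff_n(C^j)$ in $\nGlob n$, where it is computed pointwise in the category of sets. Since filtered colimits commute with finite limits there, the canonical comparison maps $\operatorname{colim}_j (C^j_k\times_i C^j_l) \to X_k\times_i X_l$ are isomorphisms; hence the identity and composition operations of the $C^j$ extend uniquely to $X$, and the axioms \ref{precat:first}--\ref{precat:last}, being finitary equations, hold in $X$ because they hold in each $C^j$. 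One checks that $X$, so equipped, is the colimit of $(C^j)_j$ in $\nPCat n$, which yields the claim.

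The only genuinely technical points are checking that the precategory structure does fit the format of an essentially algebraic theory --- i.e.\ that the domains of the partial composition operations are carved out by finite limits over the sorts and operations already constructed --- and, correspondingly, the routine verification that filtered colimits interact well with the composability pullbacks and with the operations and equations; granting these, everything else is a formal consequence of the general theory of locally presentable categories. In the present paper, this proposition is simply cited from~\cite[Proposition~1.4.2.4]{forest:tel-03155192}.
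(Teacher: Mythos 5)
Your proposal is correct and follows essentially the same route as the paper, which likewise observes that the definition of $n$\precategories translates directly into a (finitary) essentially algebraic theory containing that of $n$\globular sets, and then invokes the general theory of locally presentable categories (citing \cite[Proposition~1.4.2.4]{forest:tel-03155192} for the details). Your fleshing-out of the filtered-colimit preservation via commutation with the finite limits defining the composability domains is exactly the standard argument the cited reference relies on.
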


\noindent
The above proposition states the existence of a functor
\[
  \freealgf_n \co \nGlob n \to \nPCat n
\]
which is left adjoint to $\ccfgff_n$, sending an $n$-globular set to the
$n$-precategory it freely generates. Moreover, the functor~$\ccfgff_n$ can be
shown monadic using Beck's monadicity
theorem~\cite[Proposition~1.4.2.5]{forest:tel-03155192}:

\begin{prop}
  \label{prop:pc-cat-fgf-monadic}
  For every~$n \in \N\cup\set\omega$, the functor~$\ccfgff_n$ is monadic.
\end{prop}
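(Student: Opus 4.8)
The plan is to invoke Beck's monadicity theorem for the adjunction $\freealgf_n \dashv \ccfgff_n$ provided by \Cref{prop:pcatn-complete-cocomplete-ra}. It then suffices to check that $\ccfgff_n$ reflects isomorphisms and that it creates coequalizers of $\ccfgff_n$-split pairs, that is, of parallel pairs of $n$\prefunctors whose image under $\ccfgff_n$ admits a split coequalizer in $\nGlob n$.

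Reflection of isomorphisms is immediate: if $F \co C \to D$ is an $n$\prefunctor whose underlying morphism of $n$\globular sets is invertible, with inverse $G$, then $G$ preserves $\csrc_i$ and $\ctgt_i$, hence sends $i$\composable pairs to $i$\composable pairs, and since $F$ is a prefunctor which is bijective on cells, the equalities $F(G(\unitp{k+1}{u})) = \unitp{k+1}{u} = F(\unitp{k+1}{Gu})$ and $F(G(u \pcomp_i v)) = u \pcomp_i v = F(Gu \pcomp_i Gv)$ yield, after cancelling $F$, that $G$ is an $n$\prefunctor; thus $F$ is invertible in $\nPCat n$.

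For the creation of $\ccfgff_n$-split coequalizers, let $f, g \co C \to D$ be a parallel pair in $\nPCat n$ and let $q \co \ccfgff_n D \to Q$ be a split coequalizer of $\ccfgff_n f, \ccfgff_n g$ in $\nGlob n$, with section $s \co Q \to \ccfgff_n D$ of $q$ and contraction $t \co \ccfgff_n D \to \ccfgff_n C$ satisfying $\ccfgff_n f \circ t = \mathrm{id}$, $\ccfgff_n g \circ t = s \circ q$ and $q \circ s = \mathrm{id}$. I would first endow $Q$ with an $n$\precategory structure: since $s$ preserves $\csrc_i$ and $\ctgt_i$, for an $i$\composable pair $(\bar u, \bar v)$ of cells of $Q$ the pair $(s\bar u, s\bar v)$ is $i$\composable in $D$, so set $\bar u \pcomp_i \bar v := q(s\bar u \pcomp_i s\bar v)$ and $\unitp{k+1}{\bar u} := q(\unitp{k+1}{s\bar u})$. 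The crucial observation is that $q$ then preserves $\pcomp_i$, i.e.\ $q(x \pcomp_i y) = q(sq(x) \pcomp_i sq(y))$ for every $i$\composable pair $(x, y)$ in $D$: indeed, $(tx, ty)$ is $i$\composable in $C$ since $t$ preserves $\csrc_i, \ctgt_i$, and (writing also $f, g$ for the underlying maps of cells, and using successively $\ccfgff_n f \circ t = \mathrm{id}$, that $f$ is a prefunctor, $q \circ \ccfgff_n f = q \circ \ccfgff_n g$, that $g$ is a prefunctor, and $\ccfgff_n g \circ t = s \circ q$) one computes $q(x \pcomp_i y) = q(f(tx) \pcomp_i f(ty)) = q(f(tx \pcomp_i ty)) = q(g(tx \pcomp_i ty)) = q(g(tx) \pcomp_i g(ty)) = q(sq(x) \pcomp_i sq(y))$; preservation of identities by $q$ is analogous and easier. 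As each component of $q$ is a split, hence surjective, function, lifting tuples of cells of $Q$ to $D$ along $s$ then transports Axioms~\ref{precat:first}--\ref{precat:last} from $D$ to $Q$, so that $Q$ becomes an $n$\precategory and $q \co D \to Q$ an $n$\prefunctor; moreover this is the only $n$\precategory structure on $Q$ making $q$ a prefunctor, since surjectivity of $q$ forces the formulas above. Finally, any $n$\prefunctor $h \co D \to E$ with $h f = h g$ factors uniquely as $\bar h \circ q$ through the underlying split coequalizer, and a lift-along-$s$ computation analogous to the one above shows that $\bar h$ preserves $\pcomp_i$ and identities, hence is an $n$\prefunctor; so $q \co D \to Q$ is a coequalizer of $f, g$ in $\nPCat n$ whose image under $\ccfgff_n$ is the original split coequalizer. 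Hence $\ccfgff_n$ creates coequalizers of $\ccfgff_n$-split pairs, and Beck's theorem gives monadicity, uniformly in $n \in \N \cup \set\omega$.

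The step I expect to require the most care is the last one, which is essentially bookkeeping: one must use the split-coequalizer data — in particular that $s$ and $t$ are morphisms of $n$\globular sets — to lift $i$\composable tuples from $Q$ to $D$, so that the partiality of the composition operations (defined only on the pullbacks $C_k \times_i C_l$) does not obstruct either the transfer of Axioms~\ref{precat:first}--\ref{precat:last} or the verification that $q$ and $\bar h$ are prefunctors.
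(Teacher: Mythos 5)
Your proof is correct and takes the same route the paper indicates: it invokes Beck's monadicity theorem for the adjunction $\freealgf_n\dashv\ccfgff_n$ and verifies creation of coequalizers of $\ccfgff_n$-split pairs by transporting the precategory operations along the splitting, which is exactly the standard argument the paper defers to \cite[Proposition~1.4.2.5]{forest:tel-03155192}. The key computation $q(x\pcomp_i y)=q(sq(x)\pcomp_i sq(y))$ and the lift-along-$s$ transfer of the axioms are carried out correctly.
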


\noindent
This shows that, for $n\in \Ninf$, $\nPCat n$ is the category of algebras for a
monad $T^n \co \nGlob n \to \nGlob n$ on $n$\globular sets (the monad induced by
the above adjunction).

\paragraph{Polygraphs of precategories.}
In fact, for $n \in \N$, the monads $T^n$ is adequately derived by truncation
from $T^\omega$~\cite[Theorem~1.4.2.8]{forest:tel-03155192}, the latter being
\emph{truncable} in the sense of Batatnin~\cite{batanin1998computads}. By
general arguments on globular algebras, this allows the definition of
\emph{polygraphs} for the theory of precategories.

The category of \emph{$n$\polygraphs} $\nPol n$ (for $n$-precategories) is
defined by induction on $n$, together with a functor
\[
  \freecat[n]- \co \nPol n \to \nPCat n
\]
often written $\freecat-$, which associates to an $n$-polygraph the
$n$-precategory it freely generates, as follows.
We first define $\nPol 0 = \nGlob 0$ (which is isomorphic to $\Set$) and
$\freecat[0]- = \freealgf_0$ (which is the identity functor on~$\Set$). Now,
given $n \in \N$, assuming $\nPol n$ and $\freecat[n]-$ defined in dimension
$n$, we define $\nPol{n+1}$ as the pullback
\[
  \begin{tikzcd}[column sep=12mm]
    \nPol{n+1}
    \ar[r,dashed,"\poltoglob_{n+1}"]
    \ar[d,dashed,"\poltruncf_n"']
    \ar[phantom,dr,very near start,"\lrcorner"]
    &
    \nGlob{n+1}
    \ar[d,"\gtruncf_n"]
    \\
    \nPol n
    \ar[r,"{\fgfalgf_n \freecat[n]-}"']
    &
    \nGlob n
  \end{tikzcd}
\]
The functor $\poltruncf_n:\nPol{n+1}\to\nPol{n}$, called the
\emph{$n$-truncation} functor for polygraphs, admits a left adjoint
$\polincf_{n+1}:\nPol{n}\to\nPol{n+1}$, which extends an $n$-polygraph~$\P$ as
an $(n{+}1)$-polygraph with an empty set of $(n{+}1)$-generators (using the
description of polygraphs given just below).
The image $\freecat\P$ under $\freecat[n+1]-$ of an $(n{+}1)$\polygraph~$\P$ is
defined as the pushout
\[
  \begin{tikzcd}[column sep={16em,between origins}]
    \freealgf_{n+1}\gincf_{n+1} \gtruncf_n \poltoglob_{n+1}\P
    \ar[r,"(\freealgf_{n+1} \gtrunccu_n\poltoglob_{n+1})_{\P}"]
    \ar[d,"\alpha_\P"']
    &
    \freealgf_{n+1}\poltoglob_{n+1}\P
    \ar[d,dashed]
    \\
    \pcincf_{n+1}\freecat {(\poltruncf_n\P)}
    \ar[r,dashed]
    &
    \freecat\P
    \phar[lu,"\ulcorner",very near start]
  \end{tikzcd}
\]
where $\gtrunccu_n$ is the counit of the adjunction
$\gincf_{n+1} \dashv \gtruncf_n$ and $\alpha_\P$ is the composite
\[
  \alpha_\P
  \qeq 
  \begin{tikzcd}[column sep=5em]
    \freealgf_{n+1}\gincf_{n+1} \gtruncf_n \poltoglob_{n+1}\P
    \ar[r,"\sim"]
    &[-2em]
    \pcincf_{n+1}\freealgf_{n} \fgfalgf_n \freecat[n]-\poltruncf_{n}\P
    \ar[r,"{(\pcincf_{n+1}\varepsilon_n \freecat[n]-\poltruncf_{n})_{\P}}"]
    &[2em]
    \pcincf_{n+1}\freecat {(\poltruncf_n\P)}
  \end{tikzcd}
\]
where $\varepsilon_n$ is the counit of the adjunction $\freealgf_n \dashv \fgfalgf_n$.
Intuitively, $\freecat \P$ is obtained by freely generating an
$(n{+}1)$\precategory from $\freecat{(\poltruncf_n \P)}$ by attaching the
$(n{+}1)$\generators described by $\poltoglob_{n+1}\P$.
The mapping $\P \mapsto \freecat \P$ then naturally extends to a functor
$\freecat[n]- \co \nPol {n+1} \to \nPCat n$, which concludes the inductive
definition of polygraphs of precategories. More details on this construction can
be found in~\cite{forest:tel-03155192,forest2021rewriting}.

Since the monad of the theory of precategory is truncable, given~$n \in \N$, an
$n$\polygraph~$\P$ can be alternatively described as a diagram in~$\Set$ of the
form
\[
  \begin{tikzcd}[column sep=10ex,labels={inner sep=0.5pt}]
    \P_0\ar[d,"\polinj0"{inner sep=2pt}]
    &\P_1
    \ar[dl,shift right,"\gsrc_0"',pos=0.3]
    \ar[dl,shift left,"\gsrc_0",pos=0.3]\ar[d,"\polinj1"{inner sep=2pt}]
    &\P_2\ar[dl,shift right,"\gsrc_1"',pos=0.3]\ar[dl,shift left,"\gsrc_1",pos=0.3]\ar[d,"\polinj2"{inner sep=2pt}]
    &\ldots
    &\P_{n-1}
    \ar[dl,shift right,"\gsrc_{n-2}"',pos=0.3]
    \ar[dl,shift
    left,"\gsrc_{n-2}",pos=0.3]\ar[d,"\polinj{n}"{inner sep=2pt}]
    &\P_{n}\ar[dl,shift right,"\gsrc_{n-1}"',pos=0.3]\ar[dl,shift left,"\gsrc_{n-1}",pos=0.3]\\
    \freecat{\P_0}
    &
    \freecat{\P_1}
    \ar[l,shift right,"{\csrc_0}"']
    \ar[l,shift
    left,"{\ctgt_0}"]
    &\ldots
    \ar[l,shift
    right,"{\csrc_1}"']\ar[l,shift
    left,"{\ctgt_1}"]
    &\freecat{\P_{n-2}}
    &\ar[l,shift
    right,"{\csrc_{n-2}}"']\ar[l,shift
    left,"{\ctgt_{n-2}}"]\freecat{\P_{n-1}}
  \end{tikzcd}
\]
where, for~$i \in \Nlt{n}$,~$\polinj i$ is the embedding of the $i$\generators
$\P_i$ into the set~$\freecat{\P_i}$ of freely generated $i$\cells, such that
\[
  {\csrc_i}\circ\gsrc_{i+1}={\csrc_i}\circ\gtgt_{i+1}
  \qqtand
  \ctgt_i\circ\gsrc_{i+1}=\ctgt_i\circ\gtgt_{i+1}
\]
for~$i \in \Nlt{n}$. Note that the above description is the same as the original
definition of polygraphs by Burroni~\cite{burroni1993higher}, excepting that the
sets $\freecat{\P_i}$ of $i$~cells are freely generated as $i$-precategories
instead of strict $i$-categories.

By general properties on locally presentable categories, we have:
\begin{prop}
  \label{prop:npol-loc-fin-pres}
  Given $n \in \Ninf$, $\nPol n$ is a locally finitely presentable category. In
  particular, it is complete and cocomplete.
\end{prop}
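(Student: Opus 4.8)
The plan is to argue by induction on $n$, strengthening the statement so that it also records that $\freecat[n]-$ preserves filtered colimits --- this extra clause is what feeds the induction at each step. Since a locally finitely presentable category is automatically complete and cocomplete, the ``in particular'' part will then be free. The base case $n = 0$ is immediate: $\nPol 0 = \nGlob 0 \cong \Set$ is locally finitely presentable, and $\freecat[0]- = \freealgf_0$ is the identity functor, hence preserves all colimits.

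For the step $n \to n+1$, I would first read off a concrete description of $\nPol{n+1}$ from its defining pullback. As $\gtruncf_n$ merely discards the top-dimensional globes, the equation $\gtruncf_n Y = \fgfalgf_n\freecat\P$ determines $Y$ in all dimensions below $n{+}1$, so that an $(n{+}1)$-polygraph is exactly an $n$-polygraph $\P$ together with a set $S$ and a map $S \to K_n(\P)$, where $K_n \co \nPol n \to \Set$ sends $\P$ to the set of parallel pairs of $n$-cells of $\freecat\P$; morphisms correspond as well, so that $\nPol{n+1} \cong \Set \downarrow K_n$. Now $K_n$ is the composite of $\freecat[n]-$, of the forgetful functor $\fgfalgf_n$, and of the ``parallel pairs of top cells'' functor $\nGlob n \to \Set$; the last of these is finitary (it is a finite limit of evaluation functors, which are finitary since colimits in $\nGlob n$ are pointwise, and finite limits commute with filtered colimits in $\Set$), $\fgfalgf_n$ preserves directed colimits by \Cref{prop:pcatn-complete-cocomplete-ra}, and $\freecat[n]-$ preserves filtered colimits by the induction hypothesis, so $K_n$ is finitary. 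As $\nPol n$ and $\Set$ are locally finitely presentable, the comma category $\Set \downarrow K_n$ is then accessible (comma objects of accessible categories along accessible functors are accessible, see \cite{adamek1994locally}) and cocomplete (its colimits are computed from those of $\Set$ and $\nPol n$ with the evident structure maps); an accessible cocomplete category being locally finitely presentable, we conclude that $\nPol{n+1}$ is locally finitely presentable.

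It then remains to propagate the strengthened hypothesis, i.e.\ to show that $\freecat[n+1]-$ preserves filtered colimits. The description of colimits in $\Set \downarrow K_n$ makes the projections $\poltoglob_{n+1}$ and $\poltruncf_n$ preserve filtered colimits; and $\freecat[n+1]\P$ is built as a pushout from the functors $\freealgf_{n+1}$ and $\pcincf_{n+1}$ (cocontinuous, being left adjoints), $\gincf_{n+1}$ and $\gtruncf_n$ (finitary), and $\freecat[n]-\poltruncf_n$ (finitary by the previous point and the induction hypothesis) applied to $\P$; since a pushout is a finite colimit and finite colimits commute with filtered ones in the locally finitely presentable category $\nPCat{n+1}$, it follows that $\freecat[n+1]-$ preserves filtered colimits. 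Finally, for $n = \omega$ one uses that $\nPol\omega$ is the limit of the tower of truncation functors $\poltruncf_k \co \nPol{k+1} \to \nPol k$, each of which preserves filtered colimits (as a projection of the comma categories above) and preserves limits (being a right adjoint, with left adjoint $\polincf_{k+1}$); hence $\nPol\omega$ is accessible (a limit of accessible categories along accessible functors, see \cite{adamek1994locally}), with limits and colimits computed levelwise, so it is complete, cocomplete and accessible, hence locally finitely presentable.

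The principal obstacle is not a single hard lemma but the bookkeeping around the construction of polygraphs: extracting the comma-category description $\nPol{n+1} \cong \Set \downarrow K_n$ from the nested pullback/pushout definition, and carrying along the auxiliary invariant that $\freecat[n]-$ preserves filtered colimits --- which is precisely what makes $K_n$ accessible at each stage. Everything else is a routine appeal to the standard closure properties of accessible and locally presentable categories under comma objects and inverse limits along accessible functors.
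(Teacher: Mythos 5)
Your route is genuinely different from the paper's. The paper does not induct at all: it invokes the $2$-categorical fact (due to Bird) that locally presentable categories, left adjoints and natural transformations are closed under bipullbacks, observes that the defining pullback of $\nPol{n+1}$ is taken along an isofibration and is hence a bipullback of left adjoints between locally presentable categories, and concludes local presentability in one stroke --- deferring the refinement to local \emph{finite} presentability to external references. Your induction through the comma-category description $\nPol{n+1}\cong \Set\downarrow K_n$, with the auxiliary invariant that $\freecat[n]-$ is finitary, is essentially the ``more detailed study'' those references carry out, and it correctly isolates the point everything hinges on, namely the accessibility of $K_n$, i.e.\ the fact that $\freecat[n]-$ preserves filtered colimits. (In fact $\freecat[n]-$ is a left adjoint --- this is what the paper leans on via Batanin --- so your invariant could be strengthened for free, but carrying only finitarity is enough and keeps the argument self-contained.)

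There is, however, one concrete misstatement that leaves a gap exactly where the proposition says ``finitely''. You conclude that $\Set\downarrow K_n$ is accessible and cocomplete and then assert that ``an accessible cocomplete category is locally finitely presentable''. That is false: accessible plus cocomplete gives locally \emph{presentable} (locally $\lambda$-presentable for some regular $\lambda$), not locally \emph{finitely} presentable --- Banach spaces with linear contractions are accessible and cocomplete but have no nonzero finitely presentable objects. The same slip recurs in your $n=\omega$ step. To repair it you must control the accessibility rank: use the sharper form of the comma-category theorem, which gives that $F\downarrow G$ is \emph{finitely} accessible when $F$ and $G$ are finitary functors between finitely accessible categories and $F$ preserves finitely presentable objects (here $F=\mathrm{id}_{\Set}$, which does); alternatively, exhibit the finite polygraphs directly as a dense family of finitely presentable objects, which is what the paper's cited references do. With that correction the argument goes through, including the limit-of-the-tower step for $n=\omega$ (where, as one can read off from your own description of colimits in $\Set\downarrow K_n$, the truncation functors preserve \emph{all} colimits and not just the filtered ones, so colimits in $\nPol\omega$ really are computed levelwise).
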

\begin{proof}
  The $2$\category of locally presentable categories, right adjoints (\resp left
  adjoints) and natural transformations is closed under bipullbacks (see
  \cite[Theorem 2.18,Theorem 3.15]{bird1984thesis}). A pullback along an
  isofibration happens to be a bipullback and the pullback of $\gtruncf_n$ along
  $\fgfalgf_n$ can be shown to be a left adjoint and again an isofibration.
  Then, its pullback by $\freecat[n]-$, which is known (see \cite[Proposition
  3.1]{batanin1998computads}) to be a left adjoint, is again a left adjoint
  whose domain $\nPol n$ is a locally presentable category. A more detailed
  study shows that $\nPol n$ is locally \emph{finitely} presentable with finite
  polygraphs as finitely presentable objects. See \cite[Proposition
  3.3.3]{forest2022extension} for the local presentability and \cite[Theorem
  1.3.3.19]{forest:tel-03155192} for the local \emph{finite} presentability.
\end{proof}
\noindent In the following, we will write $\polterm$ for the terminal object of $\nPol n$,
for $n \in \Ninf$.


\section{Free functors are Conduché}
\label{sec:conduche}
Free precategories on polygraphs enjoy useful properties, thanks to which we
have a nice syntax for morphisms in those, as we now show. It should be noted
that many those are not valid in the usual setting of polygraph for strict
categories (as opposed to precategories).
One remarkable such property of free precategories is that their cells can be
described as canonical compositions of generators, which happen to be unique for
a given cell, so that we prefer to call them \emph{normal forms}. These normal
forms are adequately reflected by free functors, since the latter reflect
elementary compositions: in other words, they satisfy the analogue of the
Conduché property for strict categories~\cite{guetta2020polygraphs}.
In addition to providing convenient tools in the proofs, we will see in
subsequent sections that these properties entail the existence universal shapes
of compositions.

\paragraph*{Types and contexts.}
Given $m \le n \in \N$, an $n$\precategory $C$, an \emph{$m$\type} is a pair of
parallel $(m{-}1)$\cells of $C$. We use the convention that there is a unique
$0$\type, and all pairs of $0$\cells of a precategory are parallel. Given a
$k$\cell $u \in C$ for some $k \ge m$, $u$ has a canonical associated $m$\type:
$(\csrc_{m-1}(u),\ctgt_{m-1)}(u))$. In the following, an $m$\type is thought of
as the type for a formal variable, which suggests defining the notion of context
(a morphism in which the variable occurs exactly once) and of substitution
(replacing the variable by a morphism).

An \emph{$m$\context} $E$ for an $m$\type $(s,t)$ is defined by induction
on~$m$, together with the \emph{evaluation}~$E[u]$ of $E$ at a cell of $m$\type
$(s,t)$:
\begin{itemize}
\item there is a unique $0$\context of the unique $0$\type, denoted $[-]$, and
  the evaluation of it at a cell~$u \in C$ is $u$,
\item an $(m{+}1)$\context of type $(s,t)$ is a triple $E = (l,E',r)$ with
  $l,r \in C_m$, and $E'$ an $m$\context of type
  $(\csrc_{m-1}(s),\ctgt_{m-1}(t))$ such that $\ctgt_{m}(l) = E'[s]$ and
  $E'[t] = \csrc_m(r)$, and the evaluation $E[u]$ of $E$ at a cell $u$ is
  defined by $E[u] = l \comp_m E'[u] \comp_m r$.
\end{itemize}
Alternatively, an $m$\context $E$ can be thought of as an expression of the form
\[
  l_m \comp_{m-1} ( \cdots \comp_1 (l_1 \comp_0[-]\comp_0 r_1) \comp_1 \cdots ) \comp_{m-1} r_m
\]
where the $l_i,r_i \in C_i$ are the $i$\cells occurring in the definition of $E$
for $i \in \Nle m$, and its evaluation at a cell~$u$ as the cell obtained by
replacing $[-]$ by~$u$ in the above expression.

\paragraph{Normal forms.}
We have the following normal form for the cells of free precategories:
\begin{theo}
  \label{thm:canonical-form}
  Given $m \in \N$
  and a polygraph $\P\in\oPol$, every $m$\cell of $\freecat\P$ can be written
  uniquely as
  \[
    E_1[g_1] \comp_{m-1} \cdots \comp_{m-1} E_k[g_k]
  \]
  for some unique $g_1,\cdots,g_k \in \P_m$ and $(m{-}1)$\contexts
  $E_1,\ldots,E_k$ of the corresponding types.
\end{theo}
\begin{proof}
  We only sketch the proof, which is detailed
  in~\cite[Theorem~1.8.3]{forest2021rewriting}. One can adequately orient the
  axioms \ref{precat:first}--\ref{precat:last} of precategories in order to
  obtain a terminating and locally confluent rewriting system on the formal
  expressions of cells of free precategories. By standard arguments of rewriting
  theory~\cite{baader1999term}, this gives the existence and unicity of normal
  forms.
\end{proof}
\begin{rem}
  \label{rem:polinj-injective}
  A consequence of the above theorem is that the embeddings $\polinj i \co \P_i
  \to \freecat\P_i$ introduced earlier are injective. Thus, given $g \in \P_i$,
  we will often omit $\polinj i$ and write $g$ for both the element of $\P_i$
  and the cell of $\freecat\P_i$.
\end{rem}

\noindent
The unicity of normal forms directly entails the that the image under a free
functor of an identity is an identity (\resp of a generator is a generator):

\begin{prop}
  \label{prop:freecat-functors}
  Let~$F\co \P \to \Q \in \oPol$ be a morphism of polygraphs,~$k \in \Nle n$
  and~$u \in \freecat\P_k$. The following hold:
  \begin{enumerate}[label=(\roman*),ref=(\roman*)]
  \item \label{prop:freecat-functors:unit} when~$k > 0$, there exists a cell~$u'
    \in \freecat\P_{k-1}$ such that~$u = \unitp k {u'}$ if and only if there
    exists a cell~$\tilde u' \in \freecat\Q_{k-1}$ such that~$\freecat F(u) = \unitp k
    {\tilde u'}$,
  \item \label{prop:freecat-functors:gen} there exists a generator~$g \in \P_k$
    such that~$u = g$ if and only if there exists a generator~$\tilde g \in
    \Q_k$ such that~$\freecat F(u) = \tilde g$.
  \end{enumerate}
\end{prop}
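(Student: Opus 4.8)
The plan is to deduce both statements from the normal forms of \Cref{thm:canonical-form}. The two forward implications are immediate: since $\freecat F$ is a prefunctor, $u = \unitp k {u'}$ gives $\freecat F(u) = \unitp k {\freecat F(u')}$; and, as $\freecat F$ restricts to $F$ on generators (along the embeddings $\polinj k$, cf.\ \Cref{rem:polinj-injective}), if $u = g$ for a generator $g \in \P_k$ then $\freecat F(u) = F_k(g) \in \Q_k$ is again a generator. So only the converse implications require work, and for those the key point is that $\freecat F$ carries normal forms to normal forms.

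To make this precise, I would first record the routine fact that $\freecat F$ acts on contexts: it sends a $(k{-}1)$\context $E$ to the $(k{-}1)$\context $\freecat F(E)$ obtained by replacing every cell occurring in $E$ by its image under $\freecat F$, and it satisfies $\freecat F(E[v]) = \freecat F(E)[\freecat F(v)]$; this holds because the composability conditions defining a context are preserved by the prefunctor $\freecat F$, and evaluation is built from the compositions and identities that $\freecat F$ preserves. Then, writing the normal form of a $k$\cell $u$ of $\freecat\P$ as $u = E_1[g_1] \comp_{k-1} \cdots \comp_{k-1} E_j[g_j]$ and applying $\freecat F$, functoriality yields
\[
  \freecat F(u) = \freecat F(E_1)[F_k(g_1)] \comp_{k-1} \cdots \comp_{k-1} \freecat F(E_j)[F_k(g_j)].
\]
Each $\freecat F(E_i)$ is a $(k{-}1)$\context and each $F_k(g_i)$ a generator of $\Q_k$, so the right-hand side is already of the shape prescribed by \Cref{thm:canonical-form}; by the uniqueness in that theorem, it \emph{is} the normal form of $\freecat F(u)$. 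In particular the normal form of $\freecat F(u)$ has exactly $j$ generators, the $i$-th one being $F_k(g_i)$ and carried by the context $\freecat F(E_i)$.

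Both converses then follow. For \ref{prop:freecat-functors:unit}: if $\freecat F(u) = \unitp k {\tilde u'}$ is an identity, its normal form has no generators, so $j = 0$ above, which forces $u$ to be an identity $k$\cell, i.e.\ $u = \unitp k {u'}$ with $u' = \csrc_{k-1}(u)$. For \ref{prop:freecat-functors:gen}: the case $k = 0$ is trivial since $\freecat\P_0 = \P_0$, so assume $k \ge 1$ and $\freecat F(u) = \tilde g$ for a generator $\tilde g \in \Q_k$. Since whiskering a generator by identities leaves it unchanged (axiom~\ref{precat:compat-id-comp}), one has $\tilde g = E[\tilde g]$ for the trivial $(k{-}1)$\context $E$ (all of whose cells are identities), and by uniqueness this is the normal form of $\tilde g$. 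Comparing with the normal form of $\freecat F(u)$ above forces $j = 1$, $F_k(g_1) = \tilde g$ and $\freecat F(E_1) = E$. Hence every cell occurring in $\freecat F(E_1)$ is an identity, so — applying the converse of \ref{prop:freecat-functors:unit}, already established, to each cell occurring in $E_1$ (these all have dimension $\ge 1$, being whiskered with the $k$\cell $g_1$) — every cell occurring in $E_1$ is an identity as well; therefore $E_1$ is trivial and $u = E_1[g_1] = g_1$, a generator of $\P_k$.

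The one point that needs genuine care, and that I expect to be the main obstacle in a fully formal write-up, is the bookkeeping around contexts: one has to verify precisely that $\freecat F$ acts on contexts and commutes with evaluation, and — crucially — that the displayed expression for $\freecat F(u)$ is \emph{literally} of the normal-form shape, rather than merely something reducing to it, since it is this verbatim match that licenses the appeal to the uniqueness of normal forms. Everything else reduces to \Cref{thm:canonical-form} together with the unit and identity axioms of precategories.
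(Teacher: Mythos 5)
Your proof is correct and follows exactly the route the paper takes: the paper gives no detailed argument, asserting only that the statement "directly" follows from the unicity of normal forms in \Cref{thm:canonical-form}, and your write-up is a faithful and careful elaboration of that one-line justification (including the two points that genuinely need checking, namely that $\freecat F$ commutes with context evaluation and that the image of a normal form is verbatim a normal form). Nothing further is needed.
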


We should also mention now that composition in free precategories is
cancellative. This does not seem to be deducible from the more general
properties developed in the next sections.

\begin{prop}
  \label{lem:pcomp-inj}
  Given $\P \in \oPol$ and $u,v_1,v_2 \in \freecat \P$ such that $u \pcomp_i v_1
  = u \pcomp_i v_2$ for some $i$, then~$v_1 = v_2$.
\end{prop}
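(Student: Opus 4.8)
The plan is to prove the statement by strong induction on $N=\max(\dim u,\dim v_1)$, relying throughout on the normal form of \Cref{thm:canonical-form}. One first observes that $\dim v_1=\dim v_2$: the composition index $i$ being common, $\min(\dim u,\dim v_1)=i+1=\min(\dim u,\dim v_2)$, while the equality $u\pcomp_iv_1=u\pcomp_iv_2$ forces $\max(\dim u,\dim v_1)=\max(\dim u,\dim v_2)$, and together these give $\dim v_1=\dim v_2=:l$; set $k=\dim u$, so $N=\max(k,l)$. If $u$ is an identity, say $u=\unitp{k}{u'}$, then by \ref{precat:compat-id-comp} either $k\le l$ and $u\pcomp_iv_j=v_j$, so that $v_1=u\pcomp_iv_1=u\pcomp_iv_2=v_2$ at once, or $k>l$ and $u\pcomp_iv_j=\unitp{k}{u'\pcomp_iv_j}$, in which case applying $\csrc_{k-1}$ and using \ref{precat:src-tgt-unit} yields $u'\pcomp_iv_1=u'\pcomp_iv_2$, an instance with $\max(\dim u',l)=k-1<N$ to which the induction hypothesis applies. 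So from now on $u$ is not an identity, with normal form $u=E_1[g_1]\comp_{k-1}\cdots\comp_{k-1}E_p[g_p]$ where $p\ge1$.

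If $k=l$ (so $i=k-1$), write $v_j=F^j_1[h^j_1]\comp_{k-1}\cdots\comp_{k-1}F^j_{q_j}[h^j_{q_j}]$. The concatenated expression $E_1[g_1]\comp_{k-1}\cdots\comp_{k-1}E_p[g_p]\comp_{k-1}F^j_1[h^j_1]\comp_{k-1}\cdots\comp_{k-1}F^j_{q_j}[h^j_{q_j}]$ has exactly the shape prescribed by \Cref{thm:canonical-form} (a $\comp_{k-1}$-composite of $(k{-}1)$-contexts applied to generators, composable because $\ctgt_{k-1}(u)=\csrc_{k-1}(v_j)$), hence it is the normal form of $u\pcomp_{k-1}v_j$; equating the normal forms of the two composites and using uniqueness gives $q_1=q_2$ and then a term-by-term equality of the trailing blocks, which is the equality of the normal forms of $v_1$ and $v_2$, whence $v_1=v_2$. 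If instead $k>l$ (so $i=l-1$), distributing $v_j$ through the normal form of $u$ via \ref{precat:distrib} gives $u\pcomp_{l-1}v_j=(E_1[g_1]\pcomp_{l-1}v_j)\comp_{k-1}\cdots\comp_{k-1}(E_p[g_p]\pcomp_{l-1}v_j)$, which is the normal form of $u\pcomp_{l-1}v_j$ once we know (key lemma below) that each $E_m[g_m]\pcomp_{l-1}v_j$ is again a context applied to the generator $g_m$; comparison then gives $E_m[g_m]\pcomp_{l-1}v_1=E_m[g_m]\pcomp_{l-1}v_2$ for each $m$, and applying $\ctgt_{k-1}$ together with \ref{precat:csrc-tgt} (the case $k>l$) this becomes $\ctgt_{k-1}(E_m[g_m])\pcomp_{l-1}v_1=\ctgt_{k-1}(E_m[g_m])\pcomp_{l-1}v_2$, an instance with $\max(k-1,l)=k-1<N$, so the induction hypothesis yields $v_1=v_2$.

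The remaining case $k<l$ (so $i=k-1$) is dual: writing $v_j=F^j_1[h^j_1]\comp_{l-1}\cdots\comp_{l-1}F^j_{q_j}[h^j_{q_j}]$ and distributing $u$ via \ref{precat:distrib}, one gets $u\pcomp_{k-1}v_j=(u\pcomp_{k-1}F^j_1[h^j_1])\comp_{l-1}\cdots\comp_{l-1}(u\pcomp_{k-1}F^j_{q_j}[h^j_{q_j}])$, where by the key lemma $u\pcomp_{k-1}(F[h])=\overline F[h]$ for the context $\overline F$ obtained from $F$ by replacing each whisker of dimension $\ge k$ by its $\pcomp_{k-1}$-whiskering by $u$ (keeping $h$ and the other whiskers untouched); so this is again the normal form, and comparison gives $q_1=q_2$, $h^1_r=h^2_r$, and $\overline{F^1_r}=\overline{F^2_r}$ for all $r$. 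Unpacking the last equality whisker by whisker, the unchanged whiskers coincide directly, and each pair of changed whiskers $c^1,c^2$ satisfies $u\pcomp_{k-1}c^1=u\pcomp_{k-1}c^2$ with $\max(\dim u,\dim c^1)=\dim c^1<l=N$, hence $c^1=c^2$ by the induction hypothesis; thus $F^1_r=F^2_r$ for all $r$ and $v_1=v_2$. In all three cases the subcases where some $v_j$ is an identity are disposed of using the same tools together with the elementary observation (again a consequence of \Cref{thm:canonical-form}) that composing or whiskering preserves the number of generators of the normal form, so that a composite which is an identity has its higher-dimensional factor (or both factors, in equal dimension) an identity.

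The one genuinely substantial point is the key lemma invoked above: that whiskering a context applied to a generator by a cell of strictly smaller dimension yields again a context applied to the \emph{same} generator, the modification consisting precisely of whiskering each sufficiently high-dimensional whisker. I would prove this by induction on the structure of the context, using \ref{precat:distrib} and \ref{precat:assoc} to push the lower-dimensional cell inward and \ref{precat:csrc-tgt} to identify the boundary whiskers of the resulting context. The bookkeeping with the indexing of contexts and of the composition operations, and the treatment of whiskers that happen to be identities, is where essentially all of the effort lies, the rest of the argument being a rather mechanical application of \Cref{thm:canonical-form}.
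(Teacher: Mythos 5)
Your proof is correct and follows the same overall strategy as the paper's: both arguments reduce everything to the uniqueness of normal forms (\Cref{thm:canonical-form}) via a case analysis on the relative dimensions of $u$ and the $v_j$, settling the equal-dimension case by concatenation of normal forms and the whiskering cases by a whisker-by-whisker comparison of contexts. The differences are organizational rather than substantive. For the case $\dim u > \dim v_j$ you apply $\ctgt_{k-1}$ to strip a dimension off the left factor and invoke the induction hypothesis on $\max(\dim u,\dim v_1)$ --- a clean reduction which, incidentally, already works on $u\pcomp_i v_j$ itself via \ref{precat:csrc-tgt}, without first decomposing $u$ into its normal form --- whereas the paper performs a structural induction on an expression for $u$ and cancels against the outermost right whisker of a context. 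Your explicit key lemma (whiskering a context applied to a generator by a lower-dimensional cell yields a context applied to the \emph{same} generator, modified only in its high-dimensional whiskers) is used implicitly in the paper, folded into phrases like ``by the definition of composition and the unicity of normal forms''; making it explicit is sensible, and your sketched proof by induction on the context via \ref{precat:distrib}, \ref{precat:assoc} and \ref{precat:csrc-tgt} is the right one. One small imprecision: at the critical dimension $k$, associativity makes only the \emph{left} whisker absorb $u$, the right whisker of dimension $k$ being untouched; your description of which whiskers change glosses over this, but it does not affect the argument, since the untouched whiskers are compared directly and the modified ones are handled by the induction hypothesis either way.
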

\begin{proof}
  Note that, by the input and output dimension conditions of $\pcomp_i$, we
  necessarily have that the dimension of $v_1$ is the one of $v_2$. We do an
  induction on the dimension of the resulting cell $u \comp_i v_1$ and
  distinguish three cases depending on the relative dimensions of $u$, $v_1$ and
  $v_2$.
  \begin{itemize}
  \item Suppose that $u,v_1,v_2\in \freecat\P_{i+1}$. By unicity of the
    decomposition of $(i{+}1)$\cells of free precategories
    (\Cref{thm:canonical-form}) and its compatibility with $i$\composition as
    concatenation, we have~$v_1 = v_2$.
  \item Suppose $u \in \freecat\P_{i+1}$ and $v_1,v_2 \in \freecat\P_{n}$ with
    $n>i+1$. We reason by induction on~$v_1$.
    \begin{itemize}
    \item Suppose that $v_1 = \alpha$ for some generator $\alpha \in
      \P_n$. Then, by the definition of composition and the normal forms, we
      have that $v_2 = \alpha$.
    \item Suppose that $v_1 = E_1[\alpha]$ for some generator $\alpha \in \P_n$
      and $m$\context $E_1$ with $0 < m < n$. By the definition of composition
      and the unicity of normal forms, we have $v_2 = E_2[\alpha]$. Let
      $(l_j,E'_j,r_j) = E_j$ for $j \in \set{1,2}$. If $m = i+1$, then, by
      unicity of normal forms, we have $u \pcomp_i l_1 = u \pcomp_i l_2$,
      $E'_1[\alpha] = E'_2[\alpha]$ and $r_1 = r_2$. By the beginning of the
      proof, we have $l_1 = l_2$, so that $v_1 = v_2$. Otherwise, if $m > i+1$,
      then $u \pcomp_i l_1 = u \pcomp_i l_2$, $u \pcomp_i E'_1[\alpha] = u
      \pcomp_i E'_2[\alpha]$ and $u \pcomp_i r_1 = u \pcomp_i r_2$. By the
      different induction hypotheses, we have $l_1 = l_2$, $E'_1[\alpha] =
      E'_2[\alpha]$ and $r_1 = r_2$, so that $v_1 = v_2$.
    \item If
      $v_1 = E^1_1[\alpha_{1}] \pcomp_{n-1} \cdots \pcomp_{n-1}
      E^k_1[\alpha_{k}]$, then we necessarily have
      $v_2 = E^k_2[\alpha_1] \pcomp_{n-1} \cdots \pcomp_{n-1} E^k_2[\alpha_k]$
      such that $u \pcomp_i E^j_1[\alpha_j] = u \pcomp_i E^j_2[\alpha_j]$. By
      the previous argument, we have $E^j_1[\alpha_j] = E^j_2[\alpha_j]$, so
      that $v_1 = v_2$.
    \end{itemize}
  \item Suppose that $u \in \freecat\P_{n}$, $v_1,v_2 \in \freecat\P_{i+1}$ with
    $n > i+1$. We reason by induction on $u$.
    \begin{itemize}
    \item If $u = \unit {u'}$, then we have $u' \pcomp_i v_1 = u' \pcomp_i v_2$,
      so that $v_1 = v_2$ by induction.
    \item If $u = E[\alpha]$ for some $(n{-}1)$\context $E$, then let
      $(l,E',r)=E$. We then have $r \pcomp_i v_1 = r \pcomp_i v_2$ so that, by
      induction hypothesis, $v_1 = v_2$.
    \item If $u = E_1[\alpha_1] \comp_{n-1} \cdots \comp_{n-1} E_k[\alpha_k]$
      for some $k \ge 1$, $\alpha_1,\ldots,\alpha_k \in \P_n$ and contexts
      $E_1,\ldots,E_k$, then we have in particular
      $E_1[\alpha_1] \comp_i v_1 = E_1[\alpha_1] \comp_i v_2$ so that we can
      conclude $v_1 = v_2$ by the previous case.
    \end{itemize}
    
  \item Suppose that $u,v_1,v_2 \in \freecat\P_{i+1}$, $v_1,v_2 \in
    \freecat\P_{i+1}$ with $n > 0$. By the unicity of normal forms, we can
    uniquely write $u$ as $E^1[\alpha^1] \comp_i \cdots \comp_i E^k[\alpha^k]$
    and $v_j$ as $E^1_j[\beta^1_j] \comp_i \cdots \comp_i
    E^{l_j}_j[\beta^{l_j}_j]$ for $j \in \set{1,2}$ for some adequate $k,l_1,l_2
    \in \N$, $i$\contexts $E^{\cdot}$, $E^{\cdot}_1$, $E^{\cdot}_2$ and
    $(i{+}1)$\generators $\alpha^{\cdot}$, $\beta^\cdot_1$ and
    $\beta^\cdot_2$. By considering the induced normal forms on $u \comp_i
    v_1$ and $u \comp_i v_2$ by concatenation, we deduce by unicity of normal
    forms that $l_1 = l_2$ and $E^\cdot_1 = E^\cdot_2$ and $\beta^\cdot_1
    = \beta^\cdot_2$, so that $v_1 = v_2$.\qedhere
  \end{itemize}
\end{proof}
\begin{rem}
  \label{rem:sc-non-cancellative}
  Note that such a property does not hold for polygraphs of strict categories.
  Indeed, considering the $2$\polygraph of strict precategories~$\P$ defined by
  \begin{align*}
    \P_0 &= \set{x}
    &
    \P_1 &= \set{f \co x \to x}
    &
    \P_2 &= \set{\alpha \co \unit x \To f}
    ,
  \end{align*}
  we have $\alpha \comp_0 \unit f \neq \unit f \comp_0 \alpha$ while
  \[
    \alpha \comp_1(\alpha \comp_0 \unit f)
    =
    \alpha \comp_0 \alpha
    =
    \alpha \comp_1 (\unit f \comp_0 \alpha)
  \]
  in the free strict $2$\category $\freecat\P$. Graphically,
  \[
    \begin{tikzcd}
      x
      \ar[rr,bend left=50,equals]
      \ar[rr,bend left=25,phantom,"\alpha\!\Downarrow"]
      \ar[r,bend left,equals]\ar[r,bend right,"f"']\ar[r,phantom,"\alpha\!\Downarrow"]
      &x\ar[r,"f"']&x
    \end{tikzcd}
    =
    \begin{tikzcd}
      x
      \ar[r,bend left,equals]
      \ar[r,phantom,"\alpha\!\Downarrow"]
      \ar[r,bend right,"f"']
      &
      x
      \ar[r,bend left,equals]
      \ar[r,phantom,"\alpha\!\Downarrow"]
      \ar[r,bend right,"f"']
      &
      x
    \end{tikzcd}
    =
    \begin{tikzcd}
      x
      \ar[rr,bend left=50,equals]
      \ar[rr,bend left=25,phantom,"\alpha\!\Downarrow"]
      \ar[r,"f"']
      &
      x
      \ar[r,bend left,equals]\ar[r,bend right,"f"']\ar[r,phantom,"\alpha\!\Downarrow"]
      &
      x
    \end{tikzcd}
  \]
\end{rem}

\paragraph*{Conduché functors.}
We now introduce the notion of (strict) Conduché functor for precategories,
following the work of Guetta in the case of strict
categories~\cite{guetta2020polygraphs}. Informally, these functors have a
\eq{co-functoriality} property, in the sense that cells mapped to composites are
themselves composites. The notion of weak Conduché functor was introduced by
Guiraud in a seemingly unrelated context~\cite{giraud1964methode} as a necessary
and sufficient condition for a functor $F \co C \to D$ between strict
$n$\categories to be \emph{exponentiable}, \ie for the pullback functor
$F^\leftarrow \co \Cat/D \to \Cat/C$ to have a right adjoint.

Let~$n \in \Ninf$,~$C,D \in \nPCat n$ and~$F \co C \to D$ be an
$n$\prefunctor. We say that~$F$ is \index{Conduché functor}\emph{$n$\Conduche}
when it satisfies that, for all~$i,k_1,k_2,k\in \N^*_n$
with~$i = \min(k_1,k_2) - 1$ and~$k = \max(k_1,k_2)$,~$u \in C_k$,
$i$\composable~$v_1 \in D_{k_1}$ and~$v_2 \in D_{k_2}$ such
that
\[
  F(u) = v_1 \pcomp_i v_2,
\]
there exist unique $i$\composable~$u_1 \in C_{k_1} $ and~$u_2\in C_{k_2}$ such
that
\[
  F(u_1) = v_1 \qqtand F(u_2) = v_2 \qqtand u_1 \pcomp_i u_2 = u.
\]

As in the case of strict categories, the Conduché property implies a unique
lifting of identities:
\begin{prop}
  \label{prop:conduche-units}
  Given $n \in \Ninf$ and an $n$\Conduche prefunctor~$F\co C \to D \in \nPCat
  n$, if
  \[
    F(u) = \unit v
  \]
  for some~$k \in \Nlt{n}$,~${u \in C_{k+1}}$, and~${v \in D_k}$, then there
  exists a unique~$u' \in C_k$ such that
  \[
    F(u') = v
    \qqtand
    u = \unit {u'}
    \zbox.
  \]
\end{prop}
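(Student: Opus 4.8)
The plan is to reproduce the classical argument that Conduché functors reflect identities; the only real input is the uniqueness clause in the definition of $n$\Conduche prefunctor.

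First I would pin down the candidate antecedent by setting $u' := \csrc_k(u)$. Since $F$ is a prefunctor, $F(u') = \csrc_k(F(u)) = \csrc_k(\unit v) = v$ using \ref{precat:src-tgt-unit}, so $u'$ already has the required image, and it remains only to show $u = \unit{u'}$ (from which $\csrc_k(u) = \ctgt_k(u)$ then also follows). Uniqueness of $u'$ is immediate: if $\unit{u'} = u = \unit{u''}$, applying $\csrc_k$ and using \ref{precat:src-tgt-unit} gives $u' = u''$.

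For existence I would exploit the self-decomposition of the identity. By \ref{precat:compat-id-comp}, applied with both factors of dimension $k+1$ composed along dimension $k$, one has $\unit v \pcomp_k \unit v = \unit v$, so $(\unit v, \unit v)$ is a $k$\composable pair of cells of $D$ with $\unit v \pcomp_k \unit v = F(u)$. Applying the $n$\Conduche hypothesis to this decomposition (composition index $i = k$ and top dimension $k+1$, which is admissible since $k \in \Nlt n$) yields a \emph{unique} $k$\composable pair $(u_1, u_2)$ of cells of $C$ with $F(u_1) = F(u_2) = \unit v$ and $u_1 \pcomp_k u_2 = u$. But the unit laws of \ref{precat:compat-id-comp} exhibit two such pairs: $(\unit{\csrc_k(u)}, u)$, via $\unit{\csrc_k(u)} \pcomp_k u = u$, and $(u, \unit{\ctgt_k(u)})$, via $u \pcomp_k \unit{\ctgt_k(u)} = u$; in both cases the two components are sent by $F$ to $\unit v$, since $F$ preserves identities and $F(\csrc_k(u)) = F(\ctgt_k(u)) = v$. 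By the uniqueness clause these two pairs coincide, so comparing first components gives $u = \unit{\csrc_k(u)}$, which completes the proof with $u' = \csrc_k(u)$.

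I do not expect a genuine obstacle here. The only points requiring attention are the dimension bookkeeping showing that $\unit v \pcomp_k \unit v = \unit v$ and the two unit laws $\unit{\csrc_k(u)} \pcomp_k u = u = u \pcomp_k \unit{\ctgt_k(u)}$ are bona fide instances of \ref{precat:compat-id-comp} (in each case with $k = l = k+1$ and $i = k$ in the notation of that axiom, after checking that the relevant sources and targets are $k$\composable via \ref{precat:src-tgt-unit}), and verifying that the hypotheses of the $n$\Conduche property really are met at composition index $i = k$, the computations being uniform in $k$ and in particular unchanged when $k = 0$.
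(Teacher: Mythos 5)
Your proof is correct and follows essentially the same route as the paper's: decompose $F(u)=\unit v=\unit v\pcomp_k\unit v$, invoke the uniqueness clause of the Conduché property, and compare the two trivial decompositions $\unit{\csrc_k(u)}\pcomp_k u=u=u\pcomp_k\unit{\ctgt_k(u)}$ to conclude $u=\unit{\csrc_k(u)}$. The paper phrases the last step slightly differently (deducing $u=\unit{\csrc_k(u)}\pcomp_k\unit{\ctgt_k(u)}$ and hence $\csrc_k(u)=\ctgt_k(u)$), but this is the same argument.
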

\begin{proof}
  Since $\unit v = \unit v \pcomp_k \unit v$, by the Conduché property, there
  exist unique $u_1,u_2 \in C_{k+1}$ such that $F(u_1) = \unit v$,
  $F(u_2) = \unit v$ and $u = u_1 \pcomp_k u_2$. Moreover, we have that
  $F(\unit {\csrc_k (u)}) = v$ and~$u = \unit {\csrc_k (u)} \pcomp_k u$ so that
  $u_1 = \unit {\csrc_k(u)}$ and~$u_2 = u$. Symmetrically, we have that
  $u_1 = u$ and~$u_2 = \unit{\ctgt_k(u)}$. Thus,
  $u = \unit {\csrc_k(u)} \pcomp_k \unit {\ctgt_k(u)}$, so that
  $\csrc_k(u) = \ctgt_k(u)$ and $u = \unit{u'}$ with~$u' =
  \csrc_k(u)$. Uniqueness is immediate.
\end{proof}

\noindent
Unlike for strict categories, we have the remarkable property that all free
functors of precategories are Conduché:

\begin{prop}
  \label{prop:conduche}
  Given $m \in \Ninf$ and a morphism $F \co \P \to \Q \in\nPol m$, the
  prefunctor $\freecat F\co\freecat\P\to\freecat\Q$ is Conduché.
\end{prop}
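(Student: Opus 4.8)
The plan is to exploit the normal form of Theorem~\ref{thm:canonical-form} directly, reducing the Conduché property to a statement about normal forms and the compatibility of free functors with them (Proposition~\ref{prop:freecat-functors}). So suppose $\freecat F(u) = v_1 \pcomp_i v_2$ with $u \in \freecat\P_k$, $v_1 \in \freecat\Q_{k_1}$, $v_2 \in \freecat\Q_{k_2}$, $i = \min(k_1,k_2)-1$ and $k = \max(k_1,k_2)$. First I would dispose of the degenerate cases where one of the $v_j$ is an identity cell: if $v_1 = \unitp{k_1}{v_1'}$ with $k_1 < k_2$, then $v_1 \pcomp_i v_2 = v_2$ by Axiom~\ref{precat:compat-id-comp}, and $\csrctgt{-}_i(u)$ (or a suitable identity built from it, using Proposition~\ref{prop:freecat-functors}\ref{prop:freecat-functors:unit}) gives the required $u_1$ while $u_2 = u$; symmetrically for $v_2$ an identity. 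Uniqueness in these cases follows because any candidate $u_1$ must be mapped by $\freecat F$ to the identity $v_1$, hence is itself an identity by Proposition~\ref{prop:freecat-functors}\ref{prop:freecat-functors:unit}, and is then pinned down by Proposition~\ref{prop:conduche-units} once we know $\freecat F$ restricted to identities is well-behaved — actually it is cleaner to observe that $u_1$ is forced by its source/target together with the requirement $u_1 \pcomp_i u_2 = u$, using cancellativity (Proposition~\ref{lem:pcomp-inj}).

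The substantive case is $k_1 = k_2 = i+1 = k$, i.e.\ an honest $i$-composition of two $k$-cells of the same dimension, together with the case where $u$ itself has dimension $>i+1$ and $v_1,v_2$ are lower-dimensional whiskers. For the first: write $u$ in normal form as $E_1[g_1] \comp_{i} \cdots \comp_{i} E_p[g_p]$ (taking $m = i+1$ in Theorem~\ref{thm:canonical-form}). Then $\freecat F(u) = E_1'[\freecat F(g_1)] \comp_i \cdots \comp_i E_p'[\freecat F(g_p)]$ where each $\freecat F(g_j)$, being the image of a generator, is again (a whiskering of) a generator by Proposition~\ref{prop:freecat-functors}\ref{prop:freecat-functors:gen} — more precisely, the normal form of the right-hand side is obtained by concatenating the normal forms of the $E_j'[\freecat F(g_j)]$. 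On the other hand, $v_1 \pcomp_i v_2$ also has a normal form obtained by concatenating the normal forms of $v_1$ and $v_2$. By uniqueness of normal forms, there is a unique index $q$ with $0 \le q \le p$ such that the normal form of $v_1$ is (the concatenation corresponding to) $E_1'[\freecat F(g_1)] \comp_i \cdots \comp_i E_q'[\freecat F(g_q)]$ and that of $v_2$ is the remaining tail; set $u_1 = E_1[g_1] \comp_i \cdots \comp_i E_q[g_q]$ and $u_2 = E_{q+1}[g_{q+1}] \comp_i \cdots \comp_i E_p[g_p]$ (with the evident identity-cell conventions when $q = 0$ or $q = p$). Then $\freecat F(u_1) = v_1$, $\freecat F(u_2) = v_2$, and $u_1 \pcomp_i u_2 = u$. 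Uniqueness: any other lift $(u_1', u_2')$ must also be in normal form, and since $\freecat F$ acts on normal forms by the above concatenation recipe, matching $\freecat F(u_1') = v_1$ forces the cut point to be $q$, hence $u_1' = u_1$; then $u_2' = u_2$ by cancellativity (Proposition~\ref{lem:pcomp-inj}).

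For the remaining case, where $u \in \freecat\P_k$ with $k > i+1$ and $v_1, v_2$ are $k_1$- and $k_2$-cells with $\min(k_1,k_2) = i+1 < k = \max(k_1,k_2)$ — so the composite $v_1 \pcomp_i v_2$ is a whiskering of the higher-dimensional one of $v_1, v_2$ by the lower-dimensional one — I would argue by induction on the normal form of $u$, exactly paralleling the case analysis in the proof of Proposition~\ref{lem:pcomp-inj}: if $u$ is an identity then both $v_j$ are identities by Proposition~\ref{prop:freecat-functors} and we are in the degenerate case; if $u = E[g]$ for a single generator $g$, push the whiskering into the context $E = (l, E', r)$ and recurse; if $u = E_1[g_1] \comp_{k-1} \cdots \comp_{k-1} E_p[g_p]$, use that $\freecat F$ distributes the whiskering over the $\comp_{k-1}$-components and apply the single-generator case to each, using Axiom~\ref{precat:distrib}; in each step uniqueness comes from the uniqueness furnished by the inductive hypothesis together with uniqueness of normal forms. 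The main obstacle is the bookkeeping of exactly which of the many sub-cases of \ref{precat:csrc-tgt}--\ref{precat:distrib} are in play for given $(k_1,k_2)$ and of how the normal-form decomposition interacts with whiskering on either side; none of the individual steps is deep, but organizing the induction so that the degenerate (identity) cases, the equidimensional case, and the whiskering case fit together cleanly — and so that uniqueness is genuinely established and not merely existence — is where care is required.
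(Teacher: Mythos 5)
Your proposal follows essentially the same route as the paper's proof: a case split on the relative dimensions of $v_1$ and $v_2$, with the equidimensional case handled by uniqueness of normal forms (\Cref{thm:canonical-form}) and their compatibility with $\pcomp_{k-1}$ as concatenation, and the whiskering case by structural induction on the cell, with uniqueness obtained from the equidimensional case applied to iterated sources together with cancellativity (\Cref{lem:pcomp-inj}). The bookkeeping you flag at the end is exactly what the paper's proof carries out explicitly, so no essential idea is missing.
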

\begin{proof}
  For the sake of simplicity, we only handle the case $m = \omega$. Suppose
  given $n \in \N$ and an $n$-cell $u\in\freecat\P_n$ such that $F(u)=\bar
  u_1\comp_{i}\bar u_2$. We reason by case analysis on the relative dimensions
  of $\bar u_1$ and $\bar u_2$.
  \begin{itemize}
  \item If $\bar u_1,\bar u_2\in\freecat\Q_n$ then $i=n-1$. By the unicity of
    normal forms and its compatibility with $\pcomp_{n-1}$, there are unique
    $u_1,u_2$ such that $\freecat F(u_k) = \bar u_k$ for $k \in \set{1,2}$ and
    $u = u_1 \pcomp_{n-1} u_2$.
  \item Suppose $\bar u_1\in \freecat \Q_{i+1}$ and
    $\bar u_2 \in \freecat\Q_{n}$.  If there are $u_1,u_2$ such that
    $\freecat F(u_k) = \bar u_k$ for $k \in \set{1,2}$ and
    $u = u_1 \comp_i u_u$, then they are unique since, by the previous point, $u_1$
    and $\csrc_{i+1}(u_2)$ are uniquely determined by
    $\csrc_{i+1}(u) = u_1 \pcomp_{i} u^-_2$ and $\freecat F(u_1) = \bar u_1$ and
    $\freecat F(u^-_2) = \csrc_{i+1}(u_2)$. Moreover, since
    $u = u_1 \pcomp_i u_2$, we have that $u_2$ is unique by
    \Cref{lem:pcomp-inj}. So unicity holds. For existence, we reason by
    induction on $n$ and $\bar u_2$.
    \begin{itemize}
    \item If $\bar u_2 = \bar E[\bar \alpha]$ for some $(i{+}1)$\context
      $\bar E = (\bar l,\bar E',\bar r)$, then, by unicity of normal forms,
      $u = E[\alpha]$ for some $\alpha \in \P$, and $(i{+}1)$\context
      $E = (l, E',r)$, and we moreover have
      $\freecat F(l) = \bar u_1 \pcomp_i \bar l$,
      $\freecat F(E[\alpha]) = \bar E[\bar \alpha]$ and
      $\freecat F(r) = \bar r$. By the first part, there are $u_1$ and
      $\tilde l$ such that $l = u_1 \pcomp_i \tilde l$, so that
      $\tilde E = (\tilde l,E',r)$ satisfies that
      $u = u_1 \pcomp_{i} \tilde E[\alpha]$, $\freecat F(u_1) = \bar u_1$ and
      $\freecat F(\tilde E[\alpha]) = \bar E[\bar \alpha]$.
    \item If $\bar u_2 = \bar E[\bar \alpha]$ for some $(j{+}1)$\context
      $\bar E = (\bar l,\bar E',\bar r)$ with $j > i$, then, by unicity of
      normal forms, $u = E[\alpha]$ for some $\alpha \in \P$, and
      $(j{+}1)$\context $E = (l, E',r)$, and we moreover have
      $\freecat F(l) = \bar u_1 \pcomp_i \bar l$,
      $\freecat F(E'[\alpha]) = \bar u_1 \pcomp_i \bar E'[\bar \alpha]$ and
      $\freecat F(r) = \bar u_1 \pcomp_i \bar r$. By the other induction
      hypothesis, there are $u^1_1,u^2_1,u^3_1$, $\tilde l, \tilde u', \tilde r$
      such that $l = u^1_1 \pcomp_i \tilde l$,
      $E'[\alpha] = u^2_1 \pcomp_i \tilde u'$ and $r = u^3_1 \pcomp_i \tilde
      r$. Since
      $u^1_1 \pcomp_i \ctgt_j(\tilde l) = \ctgt_{j}(l) = \csrc_j(\tilde u') =
      u^2_1 \pcomp_i \csrc_j(\bar E'[\bar \alpha])$ and
      $\freecat F(u^1_1) = \freecat F(u^2_1) = \bar u_1$ and
      $\freecat F(\ctgt_j(\tilde l)) = \freecat F(\csrc_j(\tilde u')) =
      \ctgt_j(\bar l)$, by unicity, we have $u^1_1 = u^2_1$ and
      $\ctgt_j(\tilde l) = \csrc_j(\tilde u')$.  Similarly, $u^2_1 = u^3_1$ and
      $\ctgt_j(\tilde u') = \csrc_j(\tilde r)$. Thus, writing $u_1$ for $u^1_1$
      and $u_2$ for $\tilde l \pcomp_j \tilde u' \pcomp_j \tilde r$, we have
      that $u = u_1 \pcomp_i u_2$ is the wanted decomposition for $u$.
    \item If
      $\bar u_2 = \bar E_1[\bar \alpha_1] \pcomp_{n-1} \cdots \pcomp_{n-1} \bar
      E_k[\bar \alpha_k]$, then, by unicity of normal forms, we have that
      $u = E_1[\alpha_1] \pcomp_{n-1} \cdots \pcomp_{n-1} E_k[\alpha_k]$ such
      that
      $\freecat F (E_l[\alpha_l]) = \bar u_1 \pcomp_i \bar E_l[\bar
      \alpha_l]$. By induction hypothesis, we get $u^l_1$ and $u^l_2$ such that
      $E_l[\alpha_l] = u^l_1 \pcomp_i u^l_2$, $\freecat F(u^l_1) = \bar u_1$ and
      $\freecat F(u^l_2) = \bar E_l[\bar \alpha_l]$. Using the same argument as
      earlier, we get that $u^1_1 = \cdots = u^k_1$ and $u^1_2,\ldots,u^k_2$ are
      $(n{-}1)$\composable so that, writing $u_1$ for $u^1_1$ and $u_2$ for
      $u^1_2 \pcomp_{n-1} \cdots \pcomp_{n-1} u^k_2$, we have a decomposition
      $u = u_1 \pcomp_i u_2$ satisfying the wanted properties.
    \end{itemize}
  \item Suppose $\bar u_1\in \freecat \Q_{n}$ and
    $\bar u_2 \in \freecat\Q_{i+1}$. This case is similar to the previous
    one.\qedhere
  \end{itemize}
\end{proof}

\begin{rem}
  As a counter-example for the above property in the context of strict
  categories, consider the polygraphs $\P$ and $\Q$ defined by
  \begin{align*}
    \P_0 &= \set{x}
           &
    \P_1 &= \emptyset
           &
    \P_2 &= \set{\alpha \co \unit x \To \unit x, \beta\co \unit x \To \unit x}
    \\
    \Q_0 &= \set{y}
           &
    \Q_1 &= \emptyset
           &
    \Q_2 &= \set{\gamma \co \unit y \To \unit y}
           \zbox.
  \end{align*}
  We then have a morphism $F \co \P \to \Q$ sending $\alpha$ and $\beta$ to
  $\gamma$, and the associated prefunctor $\freecat F$ sends both $\alpha \comp_0 \beta$
  and $\beta \comp_0 \alpha$ to $\gamma \comp_0 \gamma$.
\end{rem}

A nice application of the above Conduché properties is the characterization of
monomorphisms of polygraphs. First, we briefly observe the equivalence between
monomorphisms of precategories and dimensionwise injections.
\begin{prop}
  \label{prop:charact-pcat-mono}
  Given $n \in \Ninf$ and $F \co C \to D \in \nPCat n$, the following are equivalent:
  \begin{enumerate}[(i)]
  \item \label{prop:charact-ppol-mono:mono} $F$ is a monomorphism,
    
  \item \label{prop:charact-ppol-mono:comp-mono} $F_k$ is a monomorphism for every $k\le n$.
  \end{enumerate}
\end{prop}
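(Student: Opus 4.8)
The strategy is to transport the question along the forgetful functor $\ccfgff_n \co \nPCat n \to \nGlob n$, using two facts about it that are already available. First, by \Cref{prop:pcatn-complete-cocomplete-ra}, $\ccfgff_n$ is a right adjoint, hence preserves all limits and in particular monomorphisms. Second, $\ccfgff_n$ is faithful: a prefunctor is by definition a morphism of the underlying $n$\globular sets subject to extra equations, so it is completely determined by its underlying family of functions, and a faithful functor reflects monomorphisms. Finally, recall that a morphism of $n$\globular sets is a monomorphism exactly when each of its components $X_k \to Y_k$, for $k \le n$, is injective ($\nGlob n$ being a presheaf category, monomorphisms there are computed levelwise). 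With these three ingredients in hand, both implications become bookkeeping.

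For \textrm{(i)} $\Rightarrow$ \textrm{(ii)}: if $F$ is a monomorphism in $\nPCat n$, then $\ccfgff_n F$ is a monomorphism in $\nGlob n$ because $\ccfgff_n$ preserves limits; hence each component $F_k \co C_k \to D_k$ is injective, that is, is a monomorphism in $\Set$. For \textrm{(ii)} $\Rightarrow$ \textrm{(i)}: if every $F_k$ is injective, then $\ccfgff_n F$ is levelwise monic, hence a monomorphism in $\nGlob n$, and since $\ccfgff_n$ is faithful it reflects monomorphisms, so $F$ is a monomorphism in $\nPCat n$.

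I do not expect any real obstacle, since all the content is carried by the standard facts that right adjoints preserve monomorphisms, that faithful functors reflect them, and that monomorphisms of presheaves are pointwise. If one prefers not to invoke ``faithful functors reflect monos'', the implication \textrm{(ii)} $\Rightarrow$ \textrm{(i)} can instead be checked by hand: given prefunctors $G, H \co E \to C$ with $F \circ G = F \circ H$, one gets $F_k \circ G_k = F_k \circ H_k$ for every $k \le n$, whence $G_k = H_k$ by injectivity of $F_k$, and therefore $G = H$ since a prefunctor is fixed by its underlying family of functions.
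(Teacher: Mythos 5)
Your proof is correct and follows essentially the same route as the paper's: one direction uses that the underlying-data functor is a right adjoint (the paper applies this to each cell functor $(-)_k$ via sketch theory, you to the forgetful functor to $\nGlob n$ from \Cref{prop:pcatn-complete-cocomplete-ra}, which amounts to the same thing since monomorphisms of globular sets are levelwise), and the other uses (joint) faithfulness to reflect monomorphisms. No gaps.
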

\begin{proof}
  The theory of $n$\precategories is sketchable and the functor
  $(-)_k \co \nPCat n \to \Set$, which to a precategory associates its set of
  $k$-cells is induced by a sketch morphism. It is thus a right
  adjoint \cite[Section~4, Theorem~4.1]{barr2000toposes}. In particular, it preserves monomorphisms. Thus,
  \ref{prop:charact-ppol-mono:mono} implies
  \ref{prop:charact-ppol-mono:comp-mono}. Moreover, since the functors $(-)_k$
  for $k < n+1$ are jointly faithful, we have that
  \ref{prop:charact-ppol-mono:comp-mono} implies
  \ref{prop:charact-ppol-mono:mono}.
\end{proof}
\noindent We then have the following characterization property for monomorphisms
of polygraphs, which are in particular preserved by the functor $\freecat-$:
\begin{prop}
  \label{prop:charact-ppol-mono}
  Given $n \in \Ninf$ and $F \co \P \to \Q \in \nPol n$, the following are equivalent:
  \begin{enumerate}[(i)]
  \item \label{prop:charact-ppol-mono:mono} $F$ is a monomorphism,
    
  \item \label{prop:charact-ppol-mono:comp-mono} $F_k$ is a monomorphism for every $k\le n$,
  \item \label{prop:charact-ppol-mono:freefunct-mono} $\freecat F$ is a
    monorphism in $\nPCat n$.
  \end{enumerate}
\end{prop}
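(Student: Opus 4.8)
The plan is to prove the three conditions equivalent, with the bulk of the work concentrated in the implication (ii)$\Rightarrow$(iii) and in an induction on $n$ for (i)$\Rightarrow$(ii); the other directions are formal. For the formal parts, I would first note that a morphism of polygraphs is entirely determined by its components $F_k\co\P_k\to\Q_k$ on generators (because $\freecat-$ is a functor), so the functors $(-)_k\co\nPol n\to\Set$ are jointly faithful, and hence if every $F_k$ is injective then $F$ is a monomorphism, giving (ii)$\Rightarrow$(i). For (iii)$\Rightarrow$(ii) I would use that $\freecat F_k\circ\polinj_k=\polinj_k\circ F_k$ by construction of $\freecat F$, together with $\polinj_k$ injective (\Cref{rem:polinj-injective}) and \Cref{prop:charact-pcat-mono} (which turns monicity of $\freecat F$ into injectivity of each $\freecat F_k$), to conclude that each $F_k$ is injective.

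The substantive step is (ii)$\Rightarrow$(iii). Assuming every $F_k$ injective, I would prove each $\freecat F_k$ injective by induction on $k$ (reducing monicity of $\freecat F$ to this via \Cref{prop:charact-pcat-mono}). The base case $k=0$ is immediate since $\freecat\P_0=\P_0$. For the inductive step, given $u,u'$ with $\freecat F(u)=\freecat F(u')$, I would write both in the normal form of \Cref{thm:canonical-form}, say $u=E_1[g_1]\comp_{k-1}\cdots\comp_{k-1}E_l[g_l]$, apply $\freecat F$, and observe that — since $\freecat F$ is a prefunctor sending generators to generators (\Cref{prop:freecat-functors}) and hence $(k{-}1)$-contexts to $(k{-}1)$-contexts — the result is again literally of normal-form shape, so by unicity it \emph{is} the normal form of $\freecat F(u)$, and likewise for $u'$. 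Unicity of normal forms in $\freecat\Q$ then matches up the number of terms, the generators (which agree after applying $F_k$, hence agree by injectivity of $F_k$), and the contexts (whose constituent cells live in dimension $<k$, hence agree by the induction hypothesis), so $u=u'$.

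I expect the one genuinely delicate point in the whole argument to be the claim just used, that the image of a normal form under a free functor is again a normal form rather than a composite requiring further normalization; establishing it is exactly where one needs that free functors preserve the compositional structure and carry generators to generators (\Cref{prop:freecat-functors}), so that the evaluation of a context commutes with $\freecat F$. Everything downstream is then bookkeeping.

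Finally, for (i)$\Rightarrow$(ii) I would induct on $n$ (the case $n=0$ being trivial, and $n=\omega$ reducing to the finite cases since $\nPol\omega$ is the inverse limit of the $\nPol n$ along the right-adjoint truncation functors $\poltruncf_n$, so each condition at $\omega$ holds iff it holds at every finite stage). Given $F$ monic in $\nPol{n+1}$: since $\poltruncf_n$ is a right adjoint ($\polincf_{n+1}\dashv\poltruncf_n$), $\poltruncf_n F$ is monic in $\nPol n$, so by the induction hypothesis $F_k$ is injective for $k\le n$, and then by (ii)$\Rightarrow$(iii) (already proved, applied in dimension $n$) $\freecat F_k$ is injective for $k\le n$. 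To see $F_{n+1}$ injective, take $g,g'\in\P_{n+1}$ with $F_{n+1}(g)=F_{n+1}(g')$; applying $\csrc_n,\ctgt_n$ and using injectivity of $\freecat F_n$ shows $g$ and $g'$ are parallel with a common boundary $b$. I would then build a witness: let $Z$ be the $(n{+}1)$-polygraph obtained from $\poltruncf_n\P$ by adjoining one fresh $(n{+}1)$-generator $z$ of type $b$, and let $G,H\co Z\to\P$ be the identity in dimensions $\le n$ with $G(z)=g$ and $H(z)=g'$ — both well-defined precisely because $g$ and $g'$ share the boundary $b$. Then $FG=FH$, so monicity of $F$ forces $G=H$, hence $g=g'$; thus $F_{n+1}$ is injective, completing the induction.
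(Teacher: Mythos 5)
Your proposal is correct and, for three of the four implications, follows the paper's own route: (ii)$\Rightarrow$(i) via joint faithfulness of the $(-)_k$, (iii)$\Rightarrow$(ii) via the naturality square $\polinje^\Q_k \circ F_k = \freecat F_k \circ \polinje^\P_k$ together with injectivity of the embeddings, and (i)$\Rightarrow$(ii) via exactly the same witness construction (adjoin a single fresh $n$-generator with the common boundary, build two morphisms sending it to $g$ and $g'$, and use monicity of $F$). The one place you genuinely diverge is (ii)$\Rightarrow$(iii). The paper argues by induction on an expression defining $u$, invoking the Conduch\'e property (\Cref{prop:conduche}) to lift a decomposition $\freecat F(u)=\freecat F(u_1)\comp_i\freecat F(u_2)$ back to a matching decomposition of $v$, \Cref{prop:conduche-units} for the identity case, and \Cref{prop:freecat-functors} for the generator case. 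You instead go directly through \Cref{thm:canonical-form}: write $u$ and $u'$ in normal form, observe that a free functor carries generators to generators and contexts to contexts so the image decomposition is again of normal-form shape, and then match factors by unicity, handling the contexts' constituent cells by induction on dimension. Both arguments are sound; yours is somewhat more self-contained (it leans only on the normal-form theorem and injectivity of the embeddings), while the paper's reuses the Conduch\'e machinery it has already built and avoids having to make explicit the (correct but slightly delicate) claim that the image of a normal form is literally the normal form of the image. The only point I would ask you to spell out is the degenerate case of the normal form, namely when $u$ is an identity cell: there the decomposition has no factors and you must fall back on the dimension induction applied to the underlying lower-dimensional cell, which is exactly where the paper invokes \Cref{prop:conduche-units}.
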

\begin{proof}
  We show this property by induction on $n$.
  \Cref{prop:charact-ppol-mono:comp-mono} clearly implies
  \Cref{prop:charact-ppol-mono:mono}.

  Conversely, assuming \Cref{prop:charact-ppol-mono:mono}, by induction
  hypothesis, we have that $F_k$ and $\freecat F_k$ are monomorphisms for $k <
  n$. Now, let $x,y \in \P_n$ such that $F_n(x) = F_n(y)$. In particular, we
  have $\freecat F_{n-1}(\csrctgt\eps(x)) = \freecat F_{n-1}(\csrctgt\eps(y))$
  for $\eps \in\set{-,+}$, so that $\csrctgt\eps(x) = \csrctgt\eps(y)$ for
  $\eps\in\set{-,+}$, by injectivity of $\freecat F_{n-1}$. Consider the
  $n$\polygraph $\R$ such that $\poltruncf_{n-1} \R = \poltruncf_{n-1} \P$ and
  $\R_n = {z}$ with $\gsrctgt\eps_{n-1}(z) =\csrctgt\eps(x)$ for $\eps \in
  \set{-,+}$. Then, we have two canonical morphisms $G^x,G^y \co \R \to \P$,
  verifying $G^x(z) = x$ and $G^y(z) = y$. We then have $F\circ G^x = F\circ
  G^y$, so that $G^x = G^y$ since $F$ is a monomorphism. In particular, we have
  $x = y$. Thus, $F_n$ is injective, so \Cref{prop:charact-ppol-mono:comp-mono}
  holds.

  By \Cref{thm:canonical-form}, the embedding
  $\polinje^\P_k:\P_k\to\freecat \P_k$ (\resp
  $\polinje^\Q_k:\Q_k\to\freecat \Q_k$) is a monomorphism. Thus,
  \Cref{prop:charact-ppol-mono:freefunct-mono} implies
  \Cref{prop:charact-ppol-mono:comp-mono}, since
  $\polinje^\Q_{k} \circ F_k = \freecat F_k \circ \polinje^\P_{k}$ and the
  right-hand side of the latter equation is a monomorphism by
  \Cref{prop:charact-pcat-mono}.

  Conversely, assume \Cref{prop:charact-ppol-mono:comp-mono}. Let $u,v \in
  \freecat\P_n$ such that $\freecat F(u) = \freecat F(v)$. We show that $u = v$
  by induction on an expression defining $u$. If $u = \unit{u'}$ for some $u'
  \in \freecat\P_{n-1}$, by \Cref{prop:conduche,prop:conduche-units}, there
  exists $v' \in \freecat\P_{n-1}$ such that $v = \unit{v'}$. We thus have
  $\freecat F(u') = \freecat F(v')$ and $u' = v'$ by induction hypothesis. If $u
  = u_1 \comp_i u_2$ for some $i < n$ and $i$\composable $u_1,u_2 \in
  \freecat\P$, then by \Cref{prop:conduche}, there exists $i$\composable
  $v_1,v_2 \in \freecat\P$ such that $v = v_1\comp_i v_2$ and $\freecat F(u_k) =
  \freecat F(v_k) $ for $k \in \set{1,2}$, so that $u_k = v_k$ by induction
  hypothesis, and $u = v$. Finally, if $u = \polinje_{\P,n}(g)$ for some $g \in \P_n$
  then $v = \polinje^\P_n(h)$ for some $h \in \P_n$ by \Cref{prop:freecat-functors}.
  But then, we have
  \[
    \polinje^\Q_n(F_n(g))
    =
    \freecat F(\polinje^\P_n(g))
    =
    \freecat F(\polinje^\P_n(h))
    =
    \polinje^\Q_n(F_n(h))
  \]
  where $\polinje^\Q_n \circ F_n$ is a monomorphism by hypothesis and
  \Cref{rem:polinj-injective}. Thus, $g = h$ and $u = v$. Hence,
  \Cref{prop:charact-ppol-mono:freefunct-mono} holds.
\end{proof}


\section{Makkai's criterion for presheaf categories}
\label{sec:makkai}
We now recall the criterion given by Makkai~\cite{makkai2005word} to detect
whether a category $\cC$ is a presheaf category in the expected way, \ie
relatively to a concretization functor $\cC \to \Set$. In the case of a presheaf
category, the objects of the base category are recognized as the \eq{suitably
  initial} elements of the concretization. Makkai used this criterion to show
that polygraphs for strict categories do not form a presheaf categories in the
expected way, where the concretization functor maps a polygraph to the set of
all generators. We will use this criterion in \Cref{sec:presheaf} to prove that,
in the case of precategories, we do get a presheaf category.

\medskip

A \index{concrete category}\emph{concrete category} is a category~$\mcal C$
endowed with a functor
\[
  \concrf[\mcal C]- \co \mcal C \to \Set\zbox.
\]
The above \index{concretization functor}\emph{concretization functor} should be
understood as a candidate set-theoretic representation of~$\mcal C$: for
$c$ an object of $C$, the set $\concrf c$ describes the candidate elements of the associated
presheaf. The following canonical example should provide a good illustration of
this intuition.

\begin{example}
  \label{ex:can-concrete-preasheaf-cat}
  Let~$C$ be a small category.~$\hat C$ has a canonical structure of concrete
  category, where~$\concrf[\hat C]-$ is defined on preasheaves~$P \in \hat C$ by
  \[
    \concrf[\hat C]{P} = \bigsqcup_{c \in C_0} P(c)
  \]
  and extended naturally to morphisms between presheaves.
\end{example}
\noindent In the following, we will be interested in the concretization functor
given by the following example:
\begin{example}
  \label{ex:pol-concrete}
  The functor~$\poltoset{-}\co \nPol\omega \to \Set$ which maps~$\P \in \nPol\omega$ to
  \[
    \poltoset \P = \bigsqcup_{k \in \N} \P_k
  \]
  equips~$\nPol\omega$ with a structure of concrete category.
\end{example}
Later, we will study the properties of~$\oPol$ equipped with the above
concretization functor. Another concretization functor on~$\oPol$ that will be
of interest for us is given by the example below:
\begin{example}
  \label{ex:polconcrb}
  There is a functor~$\cattoset-\co \nCat\omega \to \Set$ which
  maps~$C\in\nCat\omega$ to
  \[
    \cattoset C=\bigsqcup_{k \in \N}C_k
  \]
  By precomposition with the functor $(-)^*:\nPol\omega\to\nCat\omega$, we
  obtain a functor~$\polconcrb-\co \nPol\omega \to \Set$ which
  maps~$\P \in \nPol\omega$ to
  \[
    \polconcrb\P = \bigsqcup_{k \in \N} \freecat\P_k
  \]
  and also equips~$\nPol\omega$ with a structure of concrete category.
\end{example}
\noindent
In order to distinguish between the two preceding structures of concrete
category on~$\oPol$, we use the convention that we write~$\oPol$ when
considering the concrete category structure on~$\oPol$ given by~$\poltoset-$
\glossary(Polostar){$\oPolb$}{the category~$\oPol$ equipped with the
  concretization functor~$\polconcrb -$}and~$\oPolb$ when considering the
concrete category structure on~$\oPol$ given by~$\polconcrb -$.

\medskip

An \index{equivalence of concrete categories}\emph{equivalence of concrete
  categories} between concrete categories~$(\mcal C,\concrf[\mcal C]{-})$
and~$(\mcal D,\concrf[\mcal D]{-})$ is the data of an equivalence of
categories~$\mcal E\co \mcal C \to \mcal D$ and a natural isomorphism
\[
  {\Phi\co \concrf[\mcal D]{-} \circ \mcal E \To \concrf[\mcal C]{-}}\zbox.
\]
When such an equivalence exists,~$(\mcal C,\concrf[\mcal C]{-})$ and~$(\mcal
D,\concrf[\mcal D]{-})$ are said \index{concretely equivalent
  categories}\emph{concretely equivalent}. One might then consider the following
natural question:
\begin{center}
  When is some concrete category~$(\mcal C,\concrf[\mcal C]{-})$ concretely equivalent\\
  to a presheaf category~$(\hat C,\concrf[\hat C]{-})$ for some small
  category~$C$?
\end{center}
When it is the case, we say that~$(\mcal C,\concrf[\mcal C]{-})$ is a
\index{concrete presheaf category}\emph{concrete presheaf category}.

\medskip

Given a concrete category~$(\mcal C,\concrf[\mcal C]{-})$, the
\emph{category of elements}~$\Elt(\mcal C)$ of~$\mcal C$ is the category
\begin{itemize}
\item whose objects are the pairs~$(X,x)$ where~${X \in \mcal C_0}$ and~${x \in
    \concrf[\mcal C]{X}}$, and
\item whose morphisms from~$(X,x)$ to~$(Y,y)$ are the morphisms~$f \co X \to
  Y \in \mcal C$ such that~$\concrf[\mcal C]{f}(x) = y$.
\end{itemize}
Given a morphism~$f\co (X,x) \to (Y,y)$ as above, we say that~$y$ is a
\index{specialization}\emph{specialization} of~$x$.
An object~$(X,x) \in \Elt(\mcal C)$ is \index{principal!element}\emph{principal} when, for every
morphism~$f\co (Y,y) \to (X,x) \in \Elt(\mcal C)$ such that~$f$ is a
monomorphism in~$\mcal C$, we have that~$f$ is an isomorphism; it is
\index{primitive element}\emph{primitive} when it is principal and, for all~$f\co (Y,y) \to (X,x)
\in\Elt(\mcal C)$ where~$(Y,y)$ is principal,~$f$ is an isomorphism.
\begin{example}
  Let~$C$ be a small category and consider the canonical concrete category
  structure on~$\hat C$ given by \Cref{ex:can-concrete-preasheaf-cat}. Given
  $P\in\hat C$ and $c\in C$, we write $\iota_c:P(c)\to\bigsqcup_{c\in C}P(c)$
  for the canonical injection. The category~$\Elt(\hat C)$ has
  \begin{itemize}
  \item as objects the pairs~$(P,\iota_c(x))$ where~$P \in \hat C$ and~$x \in
    P(c)$, and
  \item as morphisms from~$(P,\iota_c(x))$ to~$(Q,\iota_d(y))$ the natural
    transformations~$\alpha\co P \To Q$ such that~$c = d$ and~$\alpha_c(x) = y$.
  \end{itemize}
  \noindent Given~$(P,\iota_c(x)) \in \Elt(\hat C)$, we have the following.
  \begin{itemize}
  \item $(P,\iota_c(x))$ is principal when~$P$ is the smallest subpresheaf~$P'$
    of~$P$ such that~$x \in P'(c)$. In particular, for
    all~$c \in C$,~$(C(-,c),\iota_c(\unit c))\in \Elt(\hat C)$ is principal.
  \item $(P,\iota_c(x))$ is primitive when the natural transformation~$\theta\co
    C(-,c) \to P$ which maps~$\unit c$ to~$x$ is an isomorphism.
  \end{itemize}
\end{example}
\noindent The characterization of concrete presheaf categories given by Makkai
is the following~\cite[Theorem~4]{makkai2005word}:
\begin{theo}
  \label{thm:concrete-charact}
  Let~$(\mcal C,\concrf[\mcal C]{-})$ be a concrete category. $\mcal C$ is
  concretely equivalent to a presheaf category if and only if the following
  conditions are all satisfied:
  \begin{enumerate}[(a)]
  \item \label{thm:concrete-charact:iso} $\concrf[\mcal C]{-}$ reflects isomorphisms,
  \item \label{thm:concrete-charact:colimits} $\mcal C$ is cocomplete
    and~$\concrf[\mcal C]{-}$ preserves all small colimits,
    
  \item \label{thm:concrete-charact:small} the collection of isomorphism
    classes of primitive elements of~$\Elt(\mcal C)$ is small,
    
  \item \label{thm:concrete-charact:prim-exists} for every element~$(X,x) \in
    \Elt(C)$, there is a morphism~$(U,u) \to (X,x)$ for some primitive
    element~$(U,u)$,
    
  \item \label{thm:concrete-charact:prim-equal} given two morphisms~$f,g\co
    (U,u) \to (X,x) \in \Elt(\mcal C)$ where~$(U,u)$ is primitive, we have~$f =
    g$,
    
  \item \label{thm:concrete-charact:prim-iso} given two morphisms~$f\co (U,u)
    \to (X,x)$ and~$g\co (V,v) \to (X,x)$ of~$\Elt(\mcal C)$ where both~$(U,u)$
    and~$(V,v)$ are primitive, there is an isomorphism~$\theta\co (U,u) \to
    (V,v)$ such that~$g \circ \theta = f$. 
  \end{enumerate}
\end{theo}

\section{The support function}
\label{sec:support}
It is often useful to consider the support of a cell in a precategory, which
informally consists in the set of generators occurring in this cell. In
particular, the support will allow us to retrieve some properties of a morphism
of polygraphs $F$ from the associated free functor $\freecat F$, which will turn
out to be useful when studying polyplexes. A support function for free strict
categories was already introduced by Makkai for his study of the word problem on
these categories~\cite{makkai2005word}.

\medskip

Given $n \in \Ninf$ and an $n$\polygraph~$\P$, we define the
\index{support function}\emph{support function}
\[
  \supp[\P]\co \cattoset{\freecat\P}
  \to
  \pset{\poltoset\P}
\]
which to any cell in $\freecat\P$ associates a set of generators of~$\P$, by
induction on $u \in \freecat\P$ as follows:
\begin{itemize}
\item if $u = g \in \P_0$, then $\supp(u) = \set g$,
\item if $u = g \in \P_{k+1}$ for some $k < n$, then $\supp(u) = \set g \cup
  \supp(\gsrc(g)) \cup \supp(\gtgt(g))$,
\item if $u = \unit{u'}$ for some $k < n$ and $u' \in \freecat{\P}_{k}$, then $\supp(u) = \supp(u')$,
\item if $u = u_1 \pcomp_i u_2$ for some $0 < k_1,k_2 < n+1$, $i =
  \min(k_1,k_2)-1$ and $i$\composable $u_1 \in \freecat{\P_{k_1}}$ and $u_2 \in
  \freecat{\P_{k_2}}$, then $\supp(u) = \supp(u_1) \cup \supp(u_2)$.
\end{itemize}

One can easily verify that $\supp$ respects the axioms of precategories, so that:
\begin{lem}
  The function $\supp$ is well-defined.
\end{lem}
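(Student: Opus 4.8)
The statement to prove is that the support function $\supp$ is well-defined. Since $\supp$ has been defined by induction on the syntactic expressions denoting cells of $\freecat\P$, what needs checking is that the definition does not depend on the chosen expression: whenever two syntactic expressions denote the same cell of $\freecat\P$ — i.e.\ they are related by the axioms \ref{precat:first}--\ref{precat:last} of precategories together with functoriality of identities and compositions — the two recursively computed sets of generators agree. Concretely, $\freecat\P$ is the quotient of the free syntax on the generators of $\P$ by the congruence generated by these axioms, so by a standard argument it suffices to verify that the clauses defining $\supp$ are compatible with each defining equation of precategories.

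The plan is therefore to go through the axioms one by one and check that both sides have the same support, which in each case reduces to elementary properties of set union (idempotence, commutativity, associativity). For \ref{precat:src-tgt-unit} and \ref{precat:csrc-tgt} there is nothing to check about the value of $\supp$ itself since these axioms concern sources and targets, but one does use that the recursive clause for a generator $g$ builds in $\supp(\gsrc(g))$ and $\supp(\gtgt(g))$, so that the support of a cell always contains the supports of its iterated sources and targets; this observation is what makes the unit clause $\supp(\unit{u'}) = \supp(u')$ consistent with \ref{precat:src-tgt-unit}. For \ref{precat:compat-id-comp}, the cases $\unit u \pcomp_i v = v$ and $u \pcomp_i \unit v = u$ require that $\supp(\unit u) = \supp(u) \subseteq \supp(v)$ when $u$ is (an iterate of) a source of $v$, which follows from the previous observation, while the cases producing $\unit{u \pcomp_i v}$ are immediate from $\supp(\unit w) = \supp(w)$. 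Associativity \ref{precat:assoc} is exactly associativity of $\cup$, and the distributivity/interchange-type equations \ref{precat:distrib} follow from idempotence and commutativity of $\cup$: both sides of $u \pcomp_i (v \pcomp_j v') = (u \pcomp_i v) \pcomp_j (u \pcomp_i v')$ have support $\supp(u) \cup \supp(v) \cup \supp(v')$.

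The only mild subtlety — and the step I expect to require the most care rather than genuine difficulty — is making precise the reduction from ``well-defined on the quotient'' to ``compatible with the generating axioms'': one must be sure that the inductive clauses cover all syntactic expressions (generators, identities, and binary $i$\composites of cells of arbitrary dimensions), that they are exhaustive and exclusive up to the precategory axioms, and that compatibility with the \emph{generating} relations indeed propagates to the full congruence (this is automatic, since $\supp$ is being defined as a morphism of the algebraic structure into the precategory structure on power sets, as the sentence preceding the lemma already hints by saying ``$\supp$ respects the axioms of precategories''). Once this framing is in place, the verification itself is the short routine check above, and no genuine obstacle arises.
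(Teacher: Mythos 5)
Your proposal is correct and follows exactly the route the paper indicates (the paper omits the proof, merely stating that one "can easily verify that $\supp$ respects the axioms of precategories"). You correctly identify the only non-routine point, namely that the unit-absorption cases of \ref{precat:compat-id-comp} require the auxiliary fact $\supp(\csrctgt\eps(v)) \subseteq \supp(v)$, which follows by induction from the generator clause including the supports of sources and targets; the remaining axioms reduce to associativity, commutativity and idempotence of union.
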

\noindent
The function $\supp$ is moreover natural:
\begin{lem}
  \label{lem:supp-natural}
  Let $n \in \Ninf$ and~$F \co \P \to \Q \in \nPol n$. Then, we have that
  $\supp[\Q] \circ \cattoset{\freecat F} = \poltoset F \circ \supp[\P]$.
\end{lem}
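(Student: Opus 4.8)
The plan is to prove $\supp[\Q] \circ \cattoset{\freecat F} = \poltoset F \circ \supp[\P]$ by structural induction on a cell $u \in \freecat\P$, following exactly the inductive clauses in the definition of $\supp$. Since both sides of the equation are honest functions $\cattoset{\freecat\P} \to \pset{\poltoset\Q}$, it suffices to check that they agree on every cell $u$; by \Cref{thm:canonical-form} every cell admits an expression built from generators, identities, and compositions, so induction on such an expression is legitimate (and the value does not depend on the chosen expression since $\supp$ is well-defined).

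First I would handle the base case $u = g \in \P_0$: here $\supp[\P](u) = \set g$, so $\poltoset F(\supp[\P](u)) = \set{F_0(g)}$, while $\cattoset{\freecat F}(u) = F_0(g) \in \Q_0$, whence $\supp[\Q](\cattoset{\freecat F}(u)) = \set{F_0(g)}$ as well. Next, for $u = g \in \P_{k+1}$, I would use $\supp[\P](u) = \set g \cup \supp[\P](\gsrc(g)) \cup \supp[\P](\gtgt(g))$; applying $\poltoset F$ (which, being induced by a disjoint union of the $F_k$, commutes with unions of subsets) and the induction hypothesis on the lower-dimensional cells $\gsrc(g), \gtgt(g)$, together with the fact that $\freecat F$ is a morphism of polygraphs so sends the generator $g$ to the generator $F_{k+1}(g)$ with $\gsrc(F_{k+1}(g)) = \freecat F(\gsrc(g))$ and likewise for $\gtgt$, gives the result. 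The clause $u = \unit{u'}$ follows immediately from the induction hypothesis on $u'$ since $\freecat F$ preserves identities and $\supp$ of an identity is $\supp$ of its argument. The clause $u = u_1 \pcomp_i u_2$ follows from the induction hypotheses on $u_1, u_2$, the fact that $\freecat F$ preserves composition, and again that $\poltoset F$ commutes with binary unions.

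The main (and essentially only) subtlety is to be careful that $\poltoset F$, being the coproduct $\bigsqcup_k F_k$ of the component functions, really does satisfy $\poltoset F(S \cup T) = \poltoset F(S) \cup \poltoset F(T)$ and $\poltoset F(\set g) = \set{F_k(g)}$ for $g \in \P_k$ — this is routine but should be stated, since it is what makes the induction go through clause-by-clause. One should also remark that the induction is well-founded: the recursive calls in each clause are either on strictly lower-dimensional cells (the generator clause) or on strict subexpressions (the identity and composition clauses), so no circularity arises. I do not anticipate any genuine obstacle here; the lemma is a naturality statement that unwinds directly from the definitions once one observes that $\freecat F$ is a prefunctor and $\poltoset F$ is a union-preserving map of powersets.

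\begin{proof}
  We proceed by induction on a cell $u \in \freecat\P$, following the clauses defining $\supp[\P]$; note that $\poltoset F$ sends a subset $S \subseteq \poltoset\P$ to $\bigcup_k F_k(S \cap \P_k)$, so it preserves finite unions and sends $\set g$, for $g \in \P_k$, to $\set{F_k(g)}$.
  If $u = g \in \P_0$, then $\poltoset F(\supp[\P](u)) = \set{F_0(g)} = \supp[\Q](F_0(g)) = \supp[\Q](\freecat F(u))$.
  If $u = g \in \P_{k+1}$, then since $\freecat F$ is a morphism of polygraphs, $\freecat F(g) = F_{k+1}(g) \in \Q_{k+1}$ with $\gsrc(F_{k+1}(g)) = \freecat F(\gsrc(g))$ and $\gtgt(F_{k+1}(g)) = \freecat F(\gtgt(g))$; hence, using that $\poltoset F$ preserves unions and the induction hypotheses on $\gsrc(g)$ and $\gtgt(g)$,
  \[
    \poltoset F(\supp[\P](u)) = \set{F_{k+1}(g)} \cup \supp[\Q](\freecat F(\gsrc(g))) \cup \supp[\Q](\freecat F(\gtgt(g))) = \supp[\Q](\freecat F(u)).
  \]
  If $u = \unit{u'}$, then $\freecat F(u) = \unit{\freecat F(u')}$, so $\poltoset F(\supp[\P](u)) = \poltoset F(\supp[\P](u')) = \supp[\Q](\freecat F(u')) = \supp[\Q](\freecat F(u))$ by the induction hypothesis on $u'$.
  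If $u = u_1 \pcomp_i u_2$, then $\freecat F(u) = \freecat F(u_1) \pcomp_i \freecat F(u_2)$, so, using that $\poltoset F$ preserves unions and the induction hypotheses on $u_1$ and $u_2$,
  \[
    \poltoset F(\supp[\P](u)) = \supp[\Q](\freecat F(u_1)) \cup \supp[\Q](\freecat F(u_2)) = \supp[\Q](\freecat F(u)).
  \]
  This exhausts all cases, and the recursive calls are on strictly lower-dimensional cells or on strict subexpressions, so the induction is well-founded. Hence $\supp[\Q] \circ \cattoset{\freecat F} = \poltoset F \circ \supp[\P]$.
\end{proof}
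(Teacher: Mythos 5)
Your proof is correct and is exactly the argument the paper intends: the paper's proof of this lemma is simply ``By induction on $u \in \freecat\P$,'' and your case analysis spells out precisely that induction, clause by clause, along the definition of $\supp$. The additional observations you make (that $\poltoset F$ acts on subsets as a union-preserving direct image, and that well-definedness of $\supp$ makes the induction on an expression legitimate) are the right bookkeeping and match the paper's setup.
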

\begin{proof}
  By induction on $u \in \freecat\P$.
\end{proof}

\noindent
Given a polygraph~$\P$ and a cell $u \in \freecat\P$, the support of $u$ is
always finite. By restricting~$\P$ to the generators occurring in this support,
on can show the following:
\begin{prop}
  \label{prop:pol-lifting-cell-ex}
  Given~$n \in \Ninf$, an $n$\polygraph~$\P$ and~$u \in \freecat\P$, there
  exist a finite $n$\polygraph~$\tilde{P}$, a monomorphism~$F\co \tilde{P}
  \to \P$ and~$\tilde u \in \freecat{{\tilde{P}}}$ such that~$\freecat F(\tilde
  u) = u$ and~$\supp(\tilde u) = \poltoset{\tilde\P}$.
\end{prop}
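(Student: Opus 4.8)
The idea is to let $\tilde\P$ be the sub-polygraph of $\P$ generated by $\supp(u)$. First I would observe that $S := \supp(u) \subseteq \poltoset\P$ is finite by an immediate induction on the expression defining $u$ (each clause of the definition of $\supp$ only adds finitely many generators and recurses on cells of strictly smaller dimension or strictly smaller size). The subtlety is that $S$ is not \emph{a priori} closed under taking sources and targets of generators; but the second clause in the definition of $\supp$ forces $\supp(g) \supseteq \supp(\gsrc(g)) \cup \supp(\gtgt(g))$, and since $\supp$ is compatible with the precategory axioms (the lemma just above the statement), one shows by induction on $k$ that if $g \in S \cap \P_k$ then every generator appearing in (the normal form of) $\gsrc(g)$ and $\gtgt(g)$ already lies in $S$. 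Hence $S$ \emph{is} closed, and setting $\tilde\P_k := S \cap \P_k$ with source and target maps restricted from $\P$ yields a well-defined $n$-polygraph $\tilde\P$ together with an obvious inclusion morphism $F \co \tilde\P \to \P$, which is a monomorphism since each $F_k$ is an inclusion of sets (apply \Cref{prop:charact-ppol-mono}). By construction $\poltoset{\tilde\P} = S$, which is finite, so $\tilde\P$ is a finite polygraph.

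\emph{Lifting the cell.} It remains to produce $\tilde u \in \freecat{\tilde\P}$ with $\freecat F(\tilde u) = u$ and $\supp(\tilde u) = \poltoset{\tilde\P} = S$. I would build $\tilde u$ by induction on the expression defining $u$, lifting each constructor verbatim: a generator $g \in S$ lifts to itself (now viewed in $\tilde\P$); an identity $\unitp{}{u'}$ lifts to $\unitp{}{\tilde u'}$ using the lift $\tilde u'$ of $u'$ — here one needs $\supp(u') \subseteq \supp(u) = S$, which is immediate from the definition of $\supp$; a composite $u_1 \pcomp_i u_2$ lifts to $\tilde u_1 \pcomp_i \tilde u_2$, again because $\supp(u_j) \subseteq S$, after checking that $\tilde u_1$ and $\tilde u_2$ remain $i$-composable — this follows since $\freecat F$ is faithful enough to detect composability, or more directly from \Cref{prop:charact-ppol-mono} giving injectivity of $\freecat F_{i}$ on the relevant source/target cells. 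Functoriality of $\freecat F$ then gives $\freecat F(\tilde u) = u$ by construction. For the support equality, \Cref{lem:supp-natural} gives $\poltoset F(\supp(\tilde u)) = \supp(\freecat F(\tilde u)) = \supp(u) = S$; since $F$ is a monomorphism $\poltoset F$ is injective and its image is $S$, so $\supp(\tilde u) = S = \poltoset{\tilde\P}$, as wanted.

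\emph{Main obstacle.} The one genuinely delicate point is the closure argument: that $\supp(u)$, despite being defined by a recursion that only adds $\supp(\gsrc(g))$ and $\supp(\gtgt(g))$ \emph{at the moment $g$ itself is encountered}, ends up containing the supports of \emph{all} sources and targets of all its elements, so that $\tilde\P$ is actually a polygraph (its source/target maps land inside it). This is where one must use that $\supp$ respects the precategory axioms — in particular that $\supp$ is compatible with normal forms — so that for a generator $g \in \supp(u) \cap \P_{k}$ we genuinely have $\supp(\gsrc(g)), \supp(\gtgt(g)) \subseteq \supp(g) \subseteq \supp(u)$; the inner inductions in the lifting step are then routine. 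A clean way to organize this is to define, for any finite set $G$ of generators of $\P$, the smallest sub-polygraph $\langle G\rangle$ containing $G$ and prove $\langle\supp(u)\rangle = \supp(u)$ by dimension induction before doing anything else.
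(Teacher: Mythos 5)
Your proof is correct and takes exactly the route the paper intends (the paper itself only sketches it as ``restricting $\P$ to the generators occurring in the support''): take $\tilde\P$ to be the sub-polygraph on $\supp(u)$, establish closure under sources and targets by induction on dimension, and lift $u$ along the inclusion. The details you supply --- the closure argument, the composability check via injectivity of $\freecat F$ from \Cref{prop:charact-ppol-mono}, and the support computation via \Cref{lem:supp-natural} --- are all sound.
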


Given $F \co \P \to \Q$ and $u \in \freecat\P$, we write $F/u \co \supp(u) \to
\supp(\freecat F (u))$ for the restriction of $F$ to the support and the image
of the support of $u$.
\begin{lem}
  \label{lem:oppol-morph-restr}
  Given a pair of parallel morphisms
  \[
    \begin{tikzcd}
      \P
      \ar[r,shift left,"F"]
      \ar[r,shift right,"G"']
      &
      \Q
    \end{tikzcd}
  \]
  of $\oPol$ such that $\freecat F(u) = \freecat G(u)$ for some $u \in
  \freecat \P$, we have $F/u = G/u$.
\end{lem}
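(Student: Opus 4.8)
The plan is to prove the statement by induction on an expression defining the cell $u \in \freecat\P$, mirroring the inductive structure of the definition of $\supp$ itself (generator, identity, composite), and exploiting the fact that two parallel morphisms $F, G$ that agree on $\freecat F(u) = \freecat G(u)$ must, by the unicity results of \Cref{thm:canonical-form} and the Conduché property (\Cref{prop:conduche}), agree on the corresponding sub-expressions of $u$. The key observation is that the support $\supp(u)$ is built up exactly from the generators appearing in such an expression, so it suffices to check that $F$ and $G$ agree on each such generator $g$, i.e. that $F_k(g) = G_k(g)$ for every $g \in \supp(u) \cap \P_k$.

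First I would treat the base cases. If $u = g \in \P_0$, then $\freecat F(u) = F_0(g)$ and $\freecat G(u) = G_0(g)$ (identifying generators with cells via \Cref{rem:polinj-injective}), so the hypothesis $\freecat F(u) = \freecat G(u)$ directly gives $F_0(g) = G_0(g)$, and since $\supp(u) = \set{g}$ we are done. If $u = g \in \P_{k+1}$ is a generator, then by \Cref{prop:freecat-functors}\ref{prop:freecat-functors:gen} and the fact that $\freecat F(g), \freecat G(g)$ are equal generators, we get $F_{k+1}(g) = G_{k+1}(g)$; moreover $\freecat F(\gsrctgt\eps(g)) = \csrctgt\eps(\freecat F(g)) = \csrctgt\eps(\freecat G(g)) = \freecat G(\gsrctgt\eps(g))$, so by induction on the lower-dimensional cells $\gsrc(g), \gtgt(g)$ we obtain $F/\gsrc(g) = G/\gsrc(g)$ and $F/\gtgt(g) = G/\gtgt(g)$; since $\supp(g) = \set g \cup \supp(\gsrc(g)) \cup \supp(\gtgt(g))$, this settles the case.

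Next come the inductive cases coming from the precategory operations. If $u = \unit{u'}$, then by \Cref{prop:conduche,prop:conduche-units} (applied to $\freecat F$ and to $\freecat G$), the equal cell $\freecat F(u) = \freecat G(u)$ is an identity $\unit{w}$, and $\freecat F(u') = w = \freecat G(u')$; since $\supp(u) = \supp(u')$, the induction hypothesis on $u'$ applies directly. If $u = u_1 \pcomp_i u_2$, then applying \Cref{prop:conduche} to $\freecat F$ gives a decomposition of $\freecat F(u)$ and similarly for $\freecat G$; since $\freecat F(u) = \freecat G(u)$, the uniqueness clause of the Conduché property forces $\freecat F(u_1) = \freecat G(u_1)$ and $\freecat F(u_2) = \freecat G(u_2)$. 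By induction hypothesis, $F/u_1 = G/u_1$ and $F/u_2 = G/u_2$, and since $\supp(u) = \supp(u_1) \cup \supp(u_2)$, we conclude $F/u = G/u$. The main subtlety—rather than a genuine obstacle—is making sure the induction is well-founded: one should induct on a fixed syntactic expression for $u$ (its normal form via \Cref{thm:canonical-form}, or any expression built from generators, identities and compositions) rather than on $u$ itself, and check that the sub-expressions produced in each case are strictly smaller; once this is set up, each case is a routine appeal to the cited results.
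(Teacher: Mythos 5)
Your base cases (generators, via \Cref{prop:freecat-functors} and recursion on sources and targets) and the identity case are fine and agree with the paper's argument. The gap is in the composite case. From $u = u_1 \pcomp_i u_2$ you obtain, by functoriality alone (not by \Cref{prop:conduche}, which lifts decompositions from the codomain rather than producing them), two factorizations $\freecat F(u_1) \pcomp_i \freecat F(u_2)$ and $\freecat G(u_1) \pcomp_i \freecat G(u_2)$ of the single cell $\freecat F(u) = \freecat G(u)$ in $\freecat\Q$. You then invoke ``the uniqueness clause of the Conduché property'' to conclude that these two factorizations coincide. That is not what the Conduché property says: its uniqueness clause asserts that, for a \emph{fixed} decomposition $v_1 \pcomp_i v_2$ of the image, the lift $(u_1,u_2)$ upstairs is unique; it says nothing about a cell of $\freecat\Q$ admitting a unique decomposition as $v_1 \pcomp_i v_2$. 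Indeed it does not: already a word $a \pcomp_0 b \pcomp_0 c$ of three composable generating $1$-cells factors both as $(a\pcomp_0 b)\pcomp_0 c$ and as $a\pcomp_0(b\pcomp_0 c)$, two distinct factorizations with the same dimension profile. So nothing in your argument rules out $\freecat F(u_1) \neq \freecat G(u_1)$ at this point, and the induction cannot proceed.

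The missing idea, which is exactly how the paper closes this case, is to compare the two factorizations over the terminal polygraph. Writing $\toterm[\P]\co \P \to \polterm$ and $\toterm[\Q]\co \Q \to \polterm$ for the terminal morphisms, we have $\toterm[\Q]\circ F = \toterm[\P] = \toterm[\Q]\circ G$, hence $\freecat{\toterm[\Q]}(\freecat F(u_j)) = \freecat{\toterm[\P]}(u_j) = \freecat{\toterm[\Q]}(\freecat G(u_j))$ for $j \in \set{1,2}$. Thus $(\freecat F(u_1),\freecat F(u_2))$ and $(\freecat G(u_1),\freecat G(u_2))$ are both Conduché lifts along $\freecat{\toterm[\Q]}$ of the \emph{same} decomposition $\freecat{\toterm[\P]}(u_1)\pcomp_i\freecat{\toterm[\P]}(u_2)$ of the same cell, and now the uniqueness clause of \Cref{prop:conduche}, applied to the free functor on the terminal morphism, does give $\freecat F(u_j) = \freecat G(u_j)$ for $j \in \set{1,2}$. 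With that step supplied, the remainder of your induction goes through as written.
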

\begin{proof}
  By induction on $n$ and a formula defining $u$.
  \begin{itemize}
  \item If $u = \alpha$ for some $\alpha \in \P$, then $F(\alpha) =
    G(\alpha)$. We then also have that
    $\freecat F(\csrctgt\eps(\alpha)) =\freecat G(\csrctgt\eps(\alpha))$ for
    $\eps \in \set{-,+}$, so that $F/\csrctgt\eps(u) =G/\csrctgt\eps(u)$ by
    induction.  Thus, $F/\alpha = G/\alpha$.
  \item If $u = \unit{u'}$, then the property follows by induction hypothesis.
  \item If $u = u_1 \pcomp_i u_2$. Then, we have $\freecat F(u_1) \pcomp_i
    \freecat F(u_2) = \freecat G(u_1) \pcomp_i \freecat G(u_2)$. Writing
    $\toterm[\P]\co \P \to \polterm$ for the terminal morphism in $\oPol$,
    we have $\freecat{\toterm[\Q]}(\freecat
    F(u_j))=\freecat{\toterm[\P]}(u_j)=\freecat{\toterm[\Q]}(\freecat G(u_j))$
    for $j \in \set{1,2}$. Since $\freecat{\toterm[\P]}$ is Conduché by
    \Cref{prop:conduche}, we have $\freecat F(u_j) = \freecat G(u_j)$ for $j \in
    \set{1,2}$. Thus, $F/u_j = G/u_j$ for $j \in \set{1,2}$ so that $F / u = G / u$.\qedhere
  \end{itemize}
\end{proof}

\noindent We have the following nice description of principal elements of
$\Elt(\oPol)$ and $\Elt(\oPolb)$ using support:
\begin{lem}
  \label{lem:principal-charact}
  An element $(\P,u)$ of $\Elt(\oPol)$ (\resp $\Elt(\oPolb)$) is principal if
  and only if $\supp(u) = \poltoset{\P}$.
\end{lem}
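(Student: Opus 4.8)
The plan is to prove both directions of the equivalence, and for both the $\oPol$ and $\oPolb$ versions simultaneously, by unwinding the definition of principality: $(\P,u)$ is principal precisely when every mono $F\co(\Q,v)\to(\P,u)$ in $\Elt$ is an isomorphism. First I would handle the easier implication: suppose $\supp(u)=\poltoset\P$, and let $F\co\Q\to\P$ be a monomorphism of polygraphs with $\freecat F(v)=u$ (in the $\oPol$ case, $F$ sends the chosen element of $\poltoset\Q$ to $u$; the two cases differ only in where $u$ lives, so I would phrase the argument uniformly via $\freecat F$). By \Cref{prop:charact-ppol-mono}, $F$ is dimensionwise injective, so it suffices to show each $F_k$ is surjective. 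By \Cref{lem:supp-natural}, $\poltoset F(\supp[\Q](v))=\supp[\P](\freecat F(v))=\supp[\P](u)=\poltoset\P$, so $\poltoset F$ is surjective; since $F$ is dimensionwise injective this forces $\poltoset F$ to be a bijection, hence each $F_k$ is a bijection and $F$ is an isomorphism. Thus $(\P,u)$ is principal.

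For the converse, suppose $(\P,u)$ is principal; I want $\supp(u)=\poltoset\P$. Here I would invoke \Cref{prop:pol-lifting-cell-ex}: there is a finite polygraph $\tilde\P$, a monomorphism $F\co\tilde\P\to\P$, and $\tilde u\in\freecat{\tilde\P}$ with $\freecat F(\tilde u)=u$ and $\supp(\tilde u)=\poltoset{\tilde\P}$. This exhibits a monomorphism $(\tilde\P,\tilde u)\to(\P,u)$ in $\Elt$ (in the $\oPolb$ case the element is literally $\tilde u\mapsto u$; in the $\oPol$ case one must additionally note that $\tilde u$, or rather the generator it is built from, is tracked correctly — but \Cref{prop:pol-lifting-cell-ex} is stated exactly so that $\supp(\tilde u)$ is everything, which is what we need on the nose). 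Principality then forces $F$ to be an isomorphism, so $\poltoset F\co\poltoset{\tilde\P}\to\poltoset\P$ is a bijection; combined with naturality of $\supp$ (\Cref{lem:supp-natural}) and $\supp(\tilde u)=\poltoset{\tilde\P}$, we get $\supp(u)=\poltoset F(\supp(\tilde u))=\poltoset F(\poltoset{\tilde\P})=\poltoset\P$.

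The one subtlety — and the step I expect to require the most care — is the $\oPol$ case of the forward lift in the second paragraph: there the distinguished element of $(\P,u)$ is an element $g\in\poltoset\P$ rather than an arbitrary cell of $\freecat\P$, so to produce a morphism in $\Elt(\oPol)$ I need a finite subpolygraph $\tilde\P$ containing a generator mapping to $g$ and with $\poltoset{\tilde\P}=\supp(g)$; this is again exactly \Cref{prop:pol-lifting-cell-ex} applied to the cell $g\in\freecat\P$, using that $\supp$ of a generator contains that generator. Modulo that bookkeeping, both directions reduce to the interplay between \Cref{prop:charact-ppol-mono} (monos are dimensionwise injections), \Cref{lem:supp-natural} (naturality of support), and \Cref{prop:pol-lifting-cell-ex} (support-saturated finite lifts exist), and no genuinely new argument is needed.
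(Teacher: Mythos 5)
Your proof is correct and follows essentially the same route as the paper's: the same three ingredients (\Cref{prop:pol-lifting-cell-ex} for the principal~$\Rightarrow$ support-full direction, \Cref{lem:supp-natural} plus \Cref{prop:charact-ppol-mono} for the converse) combined in the same way, with dimensionwise bijectivity yielding the isomorphism. The extra bookkeeping you flag for the $\Elt(\oPol)$ case is handled correctly and is if anything more explicit than the paper's treatment.
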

\begin{proof}
  Assume that $(\P,u)$ is principal. Then, by \Cref{prop:pol-lifting-cell-ex},
  there exist an element~$(\tilde\P,\tilde u)$ and $F \co(\tilde\P,\tilde u) \to
  (\P,u)$ such that $F$ is a monomorphism of $\oPol$ and $\supp(\tilde u) =
  \poltoset{\tilde\P}$. Since $(\P,u)$ is principal, we have that $F$ is an
  isomorphism. Thus, by \Cref{lem:supp-natural}, we have that $\supp(u) =
  \poltoset\P$.

  Conversely, assume that $\supp(u) = \poltoset\P$. Let $(\Q,v)$ be an element
  and $F \co (\Q,v) \to (\P,u)$ be a morphism where $F$ is a monomorphism in
  $\oPol$. By \Cref{lem:supp-natural}, we have that $\poltoset{F(\supp(v))} =
  \supp(u) = \poltoset\P$. Thus, $F_k$ is surjective for every~$k \in \N$.
  Moreover, $F_k$ is injective by \Cref{prop:charact-ppol-mono}, so that $F_k$
  is an isomorphism for every~$k$. Since $\poltoset -$ reflects isomorphisms
  (exercise to the reader), we have that $F$ is an isomorphism. Thus, $(\P,u)$
  is principal.
\end{proof}
\noindent
Finally, as a consequence of \Cref{lem:oppol-morph-restr,lem:principal-charact},
we have:

\begin{lem}
  \label{lem:principal-parallel-eq}
  Given a pair of parallel morphisms
  \[
    \begin{tikzcd}
      (\P,u)
      \ar[r,shift left,"F"]
      \ar[r,shift right,"G"']
      &
      (\Q,v)
    \end{tikzcd}
  \]
  of $\Elt(\oPolb)$ where $(\P,u)$ is principal, then $F = G$.
\end{lem}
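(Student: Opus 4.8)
The plan is to read the statement off from \Cref{lem:oppol-morph-restr} and \Cref{lem:principal-charact} with essentially no extra work. First I would unfold what a morphism of $\Elt(\oPolb)$ is: a morphism $F \co (\P,u) \to (\Q,v)$ is a morphism of polygraphs $F \co \P \to \Q$ such that $\polconcrb F$ carries $u$ to $v$; since $\polconcrb\P = \bigsqcup_{k \in \N} \freecat\P_k$, this just means $\freecat F(u) = v$ when $u$ is regarded as a cell of $\freecat\P$. Hence both $F$ and $G$ satisfy $\freecat F(u) = v = \freecat G(u)$, so $F$ and $G$ form a pair of parallel morphisms of $\oPol$ agreeing on the cell $u$, and \Cref{lem:oppol-morph-restr} yields $F/u = G/u$; that is, $F_k$ and $G_k$ coincide on $\supp(u) \cap \P_k$ for every $k$.

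Now I would bring in principality. Since $(\P,u)$ is principal in $\Elt(\oPolb)$, \Cref{lem:principal-charact} tells us that $\supp(u) = \poltoset{\P}$, \ie $\supp(u) \cap \P_k = \P_k$ for each $k \in \N$. Together with the previous step, this shows that $F_k = G_k$ on all of $\P_k$ for every $k$. A morphism of polygraphs is determined by its underlying family of maps on generators, so two such morphisms that agree dimensionwise on generators are equal; therefore $F = G$, which is the claim.

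I do not expect any genuine difficulty here, since the lemma is precisely the combination of the two results cited just before it. The only point to keep an eye on is the bookkeeping between the two concrete structures on $\oPol$: \Cref{lem:oppol-morph-restr} and the notation $F/u$ are formulated for cells of $\freecat\P$, which is exactly the kind of datum carried by objects of $\Elt(\oPolb)$, whereas \Cref{lem:principal-charact} is stated uniformly for $\Elt(\oPol)$ and $\Elt(\oPolb)$. So the two ingredients really do fit together and can be chained without friction.
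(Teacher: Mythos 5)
Your proof is correct and is exactly the paper's argument: the paper states this lemma as an immediate consequence of \Cref{lem:oppol-morph-restr} and \Cref{lem:principal-charact}, which is precisely the combination you carry out (agreement on $\supp(u)$ plus $\supp(u)=\poltoset\P$ forces agreement on all generators). No gaps.
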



\section{Polyplexes}
\label{sec:polyplex}
We now introduce the construction of polyplexes for the cells of free
precategories. Those are polygraphs representing composition shapes such that
every such cell in a polygraph is the composite of a polyplex in a unique way.
Polyplexes are themselves composed of plexes (see next section) which are
polygraphs representing generators in a polygraph.
%
%
These notions are due to Burroni~\cite{burroni2012automates}, and were further
developed by Henry~\cite{henry2018nonunital}.

Formally, a \emph{polyplex} is an element $(\P,u)\in\Elt(\oPolb)$ which is
primitive (for the concrete structure introduced in \Cref{ex:polconcrb}). Given
an element $(\Q,v)$ in $\Elt(\oPolb)$, a \emph{polyplex lifting} is the data of
a polyplex $(\P,u)$ and a morphism of elements $F \co (\P,u) \to (\Q,v) \in
\Elt(\oPolb)$.

The construction of polyplexes will be carried out by induction on a formula
defining a cell. The inductive case of identities is handled by the following
lemma:
\begin{lem}
  \label{lem:polyplex-unit}
  Given an element $(\P,u) \in \Elt(\oPolb)$, $(\P,u)$ is a polyplex if and
  only if $(\P,\unit u)$ is a polyplex.
\end{lem}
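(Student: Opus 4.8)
The statement is an equivalence, so I would prove the two implications separately, and in both directions the key is to relate liftings of $u$ to liftings of $\unit u$. Recall that being a polyplex means being primitive in $\Elt(\oPolb)$, i.e.\ principal together with the property that every morphism into it from a principal element is an isomorphism. The first thing to observe is that $\supp(\unit u) = \supp(u)$ by the definition of the support function, so by \Cref{lem:principal-charact} the element $(\P,u)$ is principal if and only if $(\P, \unit u)$ is. Hence one of the two clauses of primitivity transfers immediately between $(\P,u)$ and $(\P,\unit u)$, and the whole content of the lemma is about transferring the \emph{second} clause (``every principal element mapping in is iso'').

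\textbf{Key construction.} The plan is to set up a bijection between morphisms of elements $(\Q,v)\to(\P,u)$ and morphisms of elements $(\Q,w)\to(\P,\unit u)$, compatible with the underlying morphisms of polygraphs. Given $F\co\Q\to\P$ with $\freecat F(v) = u$, since $\freecat F$ is a prefunctor it sends $\unit v$ to $\unit u$, so $F$ is also a morphism of elements $(\Q,\unit v)\to(\P,\unit u)$. Conversely, given $F\co\Q\to\P$ with $\freecat F(w) = \unit u$, I would use \Cref{prop:conduche} (free functors are Conduché) together with \Cref{prop:conduche-units}: since $\freecat F(w) = \unit u$ is an identity, there is a \emph{unique} $v\in\freecat\Q$ with $w = \unit v$ and $\freecat F(v) = u$, giving a morphism of elements $(\Q,v)\to(\P,u)$. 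These two passages are mutually inverse (uniqueness in \Cref{prop:conduche-units} is exactly what is needed), and crucially both directions keep the same underlying polygraph morphism $F$, hence preserve the property ``$F$ is a monomorphism / an isomorphism in $\oPolb$'' unchanged.

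\textbf{Assembling the equivalence.} With this correspondence in hand, suppose $(\P,u)$ is a polyplex; I want $(\P,\unit u)$ to be one too. It is principal by the support remark above. Let $(\Q,w)$ be principal with a morphism $F\co(\Q,w)\to(\P,\unit u)$. Transport along the correspondence: $(\Q,w) = (\Q,\unit v)$ for a unique $v$, and $(\Q,v)$ is principal (again by \Cref{lem:principal-charact}, since $\supp(v)=\supp(\unit v)=\poltoset\Q$), with $F\co(\Q,v)\to(\P,u)$. Since $(\P,u)$ is primitive, $F$ is an isomorphism; hence $F\co(\Q,w)\to(\P,\unit u)$ is an isomorphism, proving $(\P,\unit u)$ primitive. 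The converse direction is symmetric: if $(\P,\unit u)$ is a polyplex and $(\Q,v)$ is principal with $F\co(\Q,v)\to(\P,u)$, then $(\Q,\unit v)$ is principal with $F\co(\Q,\unit v)\to(\P,\unit u)$, which is forced to be an isomorphism, hence so is $F$.

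\textbf{Main obstacle.} The one delicate point is the converse passage from a lifting of $\unit u$ to a lifting of $u$: a priori a morphism $F\co\Q\to\P$ with $\freecat F(w) = \unit u$ need not have $w$ itself an identity, nor need the ``obvious'' candidate $\unit v$ exhaust all preimages. This is exactly where Conduché-ness of free functors is essential and cannot be bypassed — \Cref{prop:conduche-units} gives both the existence and, more importantly, the \emph{uniqueness} of the decomposition $w = \unit v$, $\freecat F(v)=u$, which is what makes the correspondence a genuine bijection rather than merely a pair of maps. Once that is secured, everything else is a bookkeeping check that principality and iso/mono status are preserved, which is routine given \Cref{lem:principal-charact} and the fact that the underlying polygraph morphism is unchanged throughout.
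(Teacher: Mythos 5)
Your proof is correct and follows essentially the same route as the paper: transfer principality via $\supp(\unit u)=\supp(u)$ and \Cref{lem:principal-charact}, then, given a principal $(\Q,w)\to(\P,\unit u)$, show $w=\unit v$ with $\freecat F(v)=u$ and $(\Q,v)$ still principal, and conclude by primitivity of $(\P,u)$ (and symmetrically for the converse). The only difference is the lemma you invoke for the identity-lifting step: you go through \Cref{prop:conduche} and \Cref{prop:conduche-units}, whereas the paper gets the same fact more directly from \Cref{prop:freecat-functors} (unicity of normal forms) together with compatibility of $\freecat F$ with $\csrc$ --- both are available at this point and deliver the same intermediate statement.
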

\begin{proof}
  By \Cref{lem:principal-charact}, $(\P,u)$ is principal if and only if
  $(\P,\unit u)$ is principal. So we can assume that both are principal.

  Suppose that $(\P,u)$ is primitive. Let $F \co (\Q,v) \to (\P,\unit u)$ be a
  morphism of elements where $\Q$ is principal. Then, by
  \Cref{prop:freecat-functors}, we have that $v = \unit {v'}$ for some $v' \in
  \freecat\Q$, and, by compatibility of $\freecat F$ with $\csrc$, we moreover
  have $\freecat F(v') = u$. Since $\supp(v) = \supp(v')$, $(\Q,v')$ is still a
  principal element. Thus, $F$ is an isomorphism since $(\P,u)$ is primitive.
  Hence, $(\P,\unit u)$ is primitive. The converse is similar.
\end{proof}

The lemmas and propositions until the end of this section, describing the
remaining cases characterizing polyplexes for composites and generators together
with global existence and unicity properties, are proved by \underline{mutual
  induction} on a formula defining the cell $u$ appearing in the
statements. First, the case of generators:

\begin{lem}
  \label{lem:polyplex-gen}
  Let $(U,u) \in \Elt(\oPolb)$. Then, the following are equivalent:
  \begin{enumerate}[label=(\roman*),ref=(\roman*)]
  \item \label{lem:polyplex-gen:gen} $(U,u)$ is a polyplex and there exist
    $\alpha \in U$ such that $u = \alpha$,
  \item \label{lem:polyplex-gen:pushout} there exist polyplex
    liftings
    \[
      G^\eps \co (U^\eps,u^\eps) \to (U,\csrctgt\eps(u))
    \]
    for $\eps \in \set{-,+}$, principal elements $(S,s)$ and $(T,t)$, and
    morphisms
    \begin{align*}
      F^{\eps-} \co (S,s)&\to (U^\eps,\csrc(u^\eps))
      &
      F^{\eps+} \co (T,t)&\to (U^\eps,\ctgt(u^\eps))      
    \end{align*}
    for $\eps \in \set{-,+}$, such that, considering the pushout
    \[
      \begin{tikzcd}[column sep=20mm]
        S \sqcup T
        \ar[r,"{[F^{+-},F^{++}]}"]
        \ar[d,"{[F^{--},F^{-+}]}"']
        &
        U^+
        \ar[d,"\bar G^+",dotted]
        \\
        U^-
        \ar[r,"\bar G^-"',dotted]
        &
        \partial U
      \end{tikzcd}
    \]
    $(U,u)$ is isomorphic to $(\bar U,\bar \alpha)$, where $\bar U$ is
    obtained from $\partial U$ by adding a generator
    \[
      \bar \alpha : \bar G^-(u^-) \to \bar G^+(u^+)
      \zbox.
    \]
  \end{enumerate}
\end{lem}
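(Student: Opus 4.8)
The plan is to prove the two implications separately, using the mutual induction hypothesis that polyplex liftings exist and are essentially unique for all cells of strictly lower complexity (in particular for $\csrc(u)$ and $\ctgt(u)$, which are cells of lower dimension or appear earlier in the formula defining $u$). Throughout, I would use \Cref{lem:principal-charact} to translate "principal" into the support condition $\supp(u) = \poltoset U$, and \Cref{prop:freecat-functors} to control which cells of a free precategory are generators.

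\medskip

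\textbf{From \ref{lem:polyplex-gen:gen} to \ref{lem:polyplex-gen:pushout}.} Assume $(U,u)$ is a polyplex with $u = \alpha$ for a generator $\alpha \in U$. Since $(U,\alpha)$ is principal, $\supp(\alpha) = \poltoset U$, hence by the inductive clause of $\supp$ every generator of $U$ other than $\alpha$ already occurs in $\csrc(\alpha)$ or $\ctgt(\alpha)$. I would first invoke the mutual induction hypothesis to obtain polyplex liftings $G^\eps \co (U^\eps, u^\eps) \to (U, \csrctgt\eps(\alpha))$ for $\eps \in \set{-,+}$, and then, applying the induction hypothesis once more to the cells $\csrc(u^\eps)$ and $\ctgt(u^\eps)$ (which are cells of $\freecat{U^\eps}$ of lower dimension), polyplex liftings producing principal elements $(S,s)$, $(T,t)$ together with the comparison maps $F^{\eps\pm}$; here one uses that $G^\eps$ carries $\csrc(u^\eps)$ to $\csrc\bigl(\csrctgt\eps(\alpha)\bigr)$, and the globular identities to see that $\csrc(\csrc\alpha) = \csrc(\ctgt\alpha)$ etc., so the same $(S,s)$ and $(T,t)$ work for both $\eps$. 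Forming the pushout $\partial U$ and adjoining the generator $\bar\alpha$ yields $(\bar U, \bar\alpha)$, and the universal property of the pushout produces a canonical comparison morphism $(\bar U,\bar\alpha) \to (U,\alpha)$ in $\Elt(\oPolb)$. Because $\supp(\alpha) = \poltoset U$ and, by construction, the images of $U^-$, $U^+$ and $\bar\alpha$ jointly cover $U$, this comparison is dimensionwise surjective; it is dimensionwise injective because the source/target data of $\alpha$ are faithfully recorded in $\partial U$ (using that $G^\eps$ are monomorphisms by \Cref{prop:charact-ppol-mono} together with \Cref{lem:oppol-morph-restr} to rule out collapsing), and therefore an isomorphism by \Cref{prop:charact-ppol-mono}.

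\medskip

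\textbf{From \ref{lem:polyplex-gen:pushout} to \ref{lem:polyplex-gen:gen}.} Conversely, given the data in \ref{lem:polyplex-gen:pushout}, I would first check that $(\bar U, \bar\alpha)$ is principal: its support is the union of $\supp(\bar G^-(u^-))$, $\supp(\bar G^+(u^+))$ and $\set{\bar\alpha}$, and since the $G^\eps$ are polyplex liftings the elements $(U^\eps, u^\eps)$ are principal, so their supports exhaust $U^\eps$; the pushout construction then forces $\supp(\bar\alpha) = \poltoset{\bar U}$, whence principality by \Cref{lem:principal-charact}. It remains to show $(\bar U,\bar\alpha)$ is primitive, i.e.\ that any morphism $H \co (V,v) \to (\bar U, \bar\alpha)$ with $(V,v)$ principal is an isomorphism. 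By \Cref{prop:freecat-functors}\ref{prop:freecat-functors:gen}, $v$ must itself be a generator $\beta \in V$ with $H(\beta) = \bar\alpha$; restricting $H$ to $\csrctgt\eps(\beta)$ and using that $G^\eps$ is \emph{primitive} (being a polyplex lifting onto a polyplex) together with the induction hypothesis, the restrictions of $H$ over the $U^\eps$ are forced to be the given $G^\eps$ up to the unique isomorphisms of polyplexes. Patching these along $S\sqcup T$ via \Cref{lem:principal-parallel-eq} (which guarantees the gluing data agrees on the principal pieces $(S,s)$, $(T,t)$) and sending $\beta \mapsto \bar\alpha$, one gets that $H$ is, up to isomorphism, the canonical comparison, hence an isomorphism. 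Thus $(\bar U,\bar\alpha)$, and therefore the isomorphic $(U,u)$, is a polyplex, and $u$ equals the generator $\alpha \in U$ corresponding to $\bar\alpha$.

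\medskip

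\textbf{Main obstacle.} The delicate point is the uniqueness/rigidity bookkeeping in the pushout: one must be sure that the two polyplex liftings $G^-$ and $G^+$ can be glued along a \emph{common} pair of principal boundary pieces $(S,s)$, $(T,t)$ — this rests on the globular identities $\csrc\csrc = \csrc\ctgt$ and $\ctgt\csrc = \ctgt\ctgt$ applied to $\alpha$, plus the essential uniqueness of polyplexes from the mutual induction — and that no generators get identified or created when forming $\partial U$, which is exactly where \Cref{lem:oppol-morph-restr}, \Cref{lem:principal-parallel-eq} and the Conduché property (via \Cref{prop:conduche}) do the work. Managing this coherently inside the mutual induction, rather than the individual verifications, is the real content.
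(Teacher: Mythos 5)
Your overall strategy matches the paper's: mutual induction, gluing the two boundary polyplex liftings along common principal pieces $(S,s)$, $(T,t)$ obtained from the essential uniqueness of polyplex liftings, forming the pushout, adjoining a generator, and comparing via the universal property. The direction \ref{lem:polyplex-gen:pushout}$\Rightarrow$\ref{lem:polyplex-gen:gen} is essentially right, modulo phrasing (what one really does is build an \emph{inverse} $\bar U\to V$ from the pushout property applied to polyplex liftings of $\csrctgt\eps(v)$ taken \emph{inside} $V$, rather than "restricting $H$").

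There is, however, a genuine gap in your proof of \ref{lem:polyplex-gen:gen}$\Rightarrow$\ref{lem:polyplex-gen:pushout}: you argue that the comparison $(\bar U,\bar\alpha)\to(U,\alpha)$ is an isomorphism by checking dimensionwise surjectivity and injectivity, and your injectivity argument rests on the claim that the $G^\eps$ are monomorphisms. Polyplex liftings are in general \emph{not} monomorphisms --- that is the whole point of unfolding a cell into its universal shape (e.g.\ the polyplex lifting of $f\comp_0 f$ over the polygraph with a single loop $f\co x\to x$ identifies two distinct $1$\generators), and \Cref{prop:charact-ppol-mono} is a characterization of monomorphisms, not a source of them. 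No injectivity analysis is needed here: once you know $\supp(\bar\alpha)=\poltoset{\bar U}$, the element $(\bar U,\bar\alpha)$ is \emph{principal} by \Cref{lem:principal-charact}, and since $(U,\alpha)$ is assumed \emph{primitive}, the definition of primitivity makes any morphism from a principal element into it an isomorphism. This is how the paper concludes (it also orders the implications so that \ref{lem:polyplex-gen:pushout}$\Rightarrow$\ref{lem:polyplex-gen:gen} is available first, guaranteeing $(\bar U,\bar\alpha)$ is itself a polyplex). Replacing your surjective-plus-injective argument by this one-line appeal to primitivity closes the gap.
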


\begin{proof}
  Suppose that \Cref{lem:polyplex-gen:pushout} holds. By the unicity of normal
  forms (\Cref{thm:canonical-form}), it is enough to show that
  $(\bar U,\bar \alpha)$ is primitive. First, it is principal by
  \Cref{lem:principal-charact} since
  \[
    \supp(\bar \alpha) = \set{\alpha} \cup \supp(\bar G^-(u^-)) \cup \supp(\bar G^+(u^+)) = \poltoset{\bar U}
    \zbox.
  \]
  Second, consider a morphism $H \co (V,v) \to (\bar U,\alpha)$ with $(V,v)$
  principal. By induction hypothesis on~\Cref{prop:primitive-lifting}, we have
  polyplex liftings
  \[
    H^\eps \co (\tilde U^\eps,\tilde u^\eps) \to (V,\csrctgt\eps(v))
  \]
  for $\eps \in \set{-,+}$. Since
  $\freecat H(\csrctgt\eps(v)) = \csrctgt\eps(\bar u)$,
  by~\Cref{prop:polyplex-lifting-unique}, we can assume that
  $(\tilde U^\eps,\tilde u^\eps) = (U^\eps,u^\eps)$ for $\eps \in
  \set{-,+}$. Since~$(S,s)$ is principal, we have, by
  \Cref{lem:principal-parallel-eq}
  \[
    H^- \circ F^{--} = H^+ \circ F^{+-}
    \qqtand
    H^- \circ F^{-+} = H^+ \circ F^{++}
    \zbox.
  \]
  Thus, we derive a morphism $\partial H' \co \partial U \to V$ from the
  pushout. By unicity of normal forms, $v = \beta$ for some $\beta \in V$. Thus,
  $\partial H'$ can be extended to $H' \co \bar U \to V$ by putting
  $H'(\alpha) = \beta$. Using~\Cref{lem:principal-parallel-eq}, we can easily
  verify that $H'$ is the inverse of $H$. Hence, $(\bar U,\bar u)$ is a
  polyplex.

  Now, assume that \Cref{lem:polyplex-gen:gen} holds. By induction hypothesis,
  there are polyplex liftings
  \[
    G^\eps \co (U^\eps,u^\eps) \to (\P,\csrctgt\eps(u))
  \]
  for $\eps \in \set{-,+}$. By induction hypothesis
  on~\Cref{prop:primitive-lifting}, there exists a polyplex lifting
  $F^{--} \co (S,s) \to (U^-,\csrc(u^-))$. Similarly, there is a polyplex
  lifting of $(U^+,\csrc(u^+))$ and, since
  $\freecat {(G^-)}(\csrc(u^-)) = \freecat{(G^+)}(\csrc(u^+))$,
  by~\Cref{prop:polyplex-lifting-unique}, it can be chosen to be of the form
  \[
    F^{+-} \co (S,s) \to (U^+,\csrc(u^+))
    \zbox.
  \]
  Similarly, there are polyplex liftings
  \[
    F^{-+} \co (T,t) \to (U^-,\ctgt(u^-))
    \qqtand
    F^{++} \co (T,t) \to (U^+,\ctgt(u^+))
    \zbox.
  \]
  Writing $F^\eps$ for $[F^{\eps-},F^{\eps+}]$ for $\eps \in \set{-,+}$,
  consider the pushout
  \[
    \begin{tikzcd}
      S \sqcup T
      \ar[r,"F^+"]
      \ar[d,"F^+"']
      &
      U^+
      \ar[d,"\bar G^+",dotted]
      \\
      U^-
      \ar[r,"\bar G^-"',dotted]
      &
      \partial U
    \end{tikzcd}
  \]
  and write $\bar U$ for the \opol obtained from $\partial U$ by adding a generator
  $\bar \alpha \co \bar G^-(u^-) \to \bar G^+(u^+)$ (this is well-defined, since
  the definition of $\partial U$ ensures that $\csrctgt\eps(\bar G^-(u^-)) =
  \csrctgt\eps(\bar G^+(u^+))$ for $\eps \in \set{-,+}$). By the first part,
  $(\bar U,\bar \alpha)$ is a polyplex, and we easily deduce a polyplex lifting $H
  \co (\bar U,\bar \alpha) \to (U,\alpha)$ from the above pushout. Since
  $(U,\alpha)$ is primitive, $H$ is an isomorphism.
  Thus,~\Cref{lem:polyplex-gen:pushout} holds.
\end{proof}
\noindent The next lemma deals with the case of composites of the polyplex construction:
\begin{lem}
  \label{lem:polyplex-comp}
  Let $(U,u) \in \Elt(\oPolb)$, $u_1 \in \freecat U_k$, $u_2 \in \freecat U_l$
  for some $k,l \in \N$, with $u_1$ and $u_2$ are $i$\composable for $i =
  \min(k,l)$. Then, the following are equivalent:
  \begin{enumerate}[label=(\roman*),ref=(\roman*)]
  \item \label{lem:polyplex-comp:comp} $(U,u)$ is a polyplex and $u = u_1
    \pcomp_i u_2$,
  \item \label{lem:polyplex-comp:pushout} there exist a principal element
    $(U',u')$ and polyplexes $(U_1,u_1)$ and $(U_2,u_2)$, and morphisms $F_j \co
    U'\to U_j \in \oPol$ and $G_j \co U_j \to U$ for $j \in \set{1,2}$, such
    that
    \[
      \begin{tikzcd}
        U'
        \ar[d,"F_1"']
        \ar[r,"F_2"]
        &
        U_2
        \ar[d,"G_2"]
        \\
        U_1
        \ar[r,"G_1"']
        &
        U
      \end{tikzcd}
    \]
    is a pushout diagram in $\oPol$, $\freecat F_1(u') = \ctgt_i(u_1)$, $\freecat F_2(u')
    = \csrc_i(u_2)$ and $u = G_1(u_1) \pcomp_i G_2(u_2)$.
  \end{enumerate}
\end{lem}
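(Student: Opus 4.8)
The plan is to establish this equivalence as one more case of the mutual induction — run simultaneously with \Cref{lem:polyplex-unit}, \Cref{lem:polyplex-gen}, \Cref{prop:primitive-lifting} and \Cref{prop:polyplex-lifting-unique} — on a formula defining $u$, closely following the proof of \Cref{lem:polyplex-gen}.

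For \Cref{lem:polyplex-comp:pushout} $\Rightarrow$ \Cref{lem:polyplex-comp:comp}, I would start from the data of \Cref{lem:polyplex-comp:pushout}. Since $\freecat F_1(u') = \ctgt_i(u_1)$, $\freecat F_2(u') = \csrc_i(u_2)$ and $u_1,u_2$ are $i$\composable, the morphisms $G_1 \circ F_1$ and $G_2 \circ F_2$ send $u'$ to the same cell of $\freecat U$, hence are equal by \Cref{lem:principal-parallel-eq} (as $(U',u')$ is principal); in particular $\freecat G_1(u_1) \pcomp_i \freecat G_2(u_2)$ is a well-defined cell, equal to $u$. It then remains to show that $(U,u)$ is primitive. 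It is principal by \Cref{lem:principal-charact}: by \Cref{lem:supp-natural} and the fact that $(U_1,u_1)$ and $(U_2,u_2)$ are polyplexes, hence principal, $\supp(u) = \supp(\freecat G_1(u_1)) \cup \supp(\freecat G_2(u_2))$ is the union of the images of the generators of $U_1$ and of $U_2$ in $U$, which is all of $\poltoset U$ because the two canonical maps of a pushout in $\oPol$ are jointly surjective on generators. For primitivity, let $H \co (V,v) \to (U,u)$ be a morphism of $\Elt(\oPolb)$ with $(V,v)$ principal. As $\freecat H$ is Conduché (\Cref{prop:conduche}) and $\freecat H(v) = \freecat G_1(u_1) \pcomp_i \freecat G_2(u_2)$, there are unique $i$\composable $v_1,v_2 \in \freecat V$ with $v = v_1 \pcomp_i v_2$ and $\freecat H(v_j) = \freecat G_j(u_j)$. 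By the induction hypothesis on \Cref{prop:primitive-lifting}, $(V,v_1)$ and $(V,v_2)$ admit polyplex liftings, which by \Cref{prop:polyplex-lifting-unique} may be taken to be $(U_1,u_1)$ and $(U_2,u_2)$, giving $H_j \co (U_j,u_j) \to (V,v_j)$ with $H \circ H_j = G_j$. Then $H_1 \circ F_1$ and $H_2 \circ F_2$ send $u'$ to the common cell $\ctgt_i(v_1) = \csrc_i(v_2)$, so $H_1 \circ F_1 = H_2 \circ F_2$ by \Cref{lem:principal-parallel-eq}, and the pushout yields $H' \co U \to V$ with $H' \circ G_j = H_j$. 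Finally $H \circ H'$ agrees with $1_U$ after composition with either $G_j$, hence equals $1_U$ since $G_1,G_2$ are jointly surjective on generators; symmetrically $H' \circ H = 1_V$, using that $H_1,H_2$ are jointly surjective on generators by the same support computation applied to the principal $(V,v)$. Thus $H$ is an isomorphism and $(U,u)$ is a polyplex.

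For the converse \Cref{lem:polyplex-comp:comp} $\Rightarrow$ \Cref{lem:polyplex-comp:pushout}, suppose $(U,u)$ is a polyplex with $u = u_1 \pcomp_i u_2$. The induction hypothesis applied to the subformulas $u_1$ and $u_2$ provides polyplex liftings $G_j \co (U_j,u_j) \to (U,u_j)$ for $j \in \set{1,2}$. Applying the induction hypothesis on \Cref{prop:primitive-lifting} to $\ctgt_i(u_1) \in \freecat{U_1}$ gives a polyplex lifting $F_1 \co (U',u') \to (U_1,\ctgt_i(u_1))$; since $\ctgt_i(u_1) = \csrc_i(u_2)$ in $\freecat U$, \Cref{prop:polyplex-lifting-unique} lets me choose a polyplex lifting $F_2 \co (U',u') \to (U_2,\csrc_i(u_2))$ with $G_1 \circ F_1 = G_2 \circ F_2$. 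Forming the pushout $\bar U$ of $F_1$ and $F_2$, with canonical maps $\bar G_j \co U_j \to \bar U$ and induced morphism $\bar G \co \bar U \to U$ satisfying $\bar G \circ \bar G_j = G_j$, and setting $\bar u = \bar G_1(u_1) \pcomp_i \bar G_2(u_2)$ (well defined since $\bar G_1 \circ F_1 = \bar G_2 \circ F_2$), the implication just proved, applied to this data, shows that $(\bar U,\bar u)$ is a polyplex. Since $\freecat{\bar G}(\bar u) = u$, the map $\bar G$ is a morphism of $\Elt(\oPolb)$ from the principal $(\bar U,\bar u)$ to the primitive $(U,u)$, hence an isomorphism; transporting the pushout square and all the data along $\bar G$ yields exactly \Cref{lem:polyplex-comp:pushout}.

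I expect the main obstacle to be organisational rather than computational: ensuring the mutual induction is well founded, so that each appeal to \Cref{prop:primitive-lifting}, \Cref{prop:polyplex-lifting-unique} and to this lemma itself is on a strictly smaller instance — which in particular requires a secondary induction on dimension so that $\ctgt_i(u_1)$ and $\csrc_i(u_2)$ count as smaller than $u = u_1 \pcomp_i u_2$ — together with the careful bookkeeping of cells and generators across the various pushouts and the verification, in the primitivity step, that $H' \circ H = 1_V$ from joint surjectivity of $H_1,H_2$ on generators.
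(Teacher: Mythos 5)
Your proof is correct and takes essentially the same route as the paper's: the same support computation (via \Cref{lem:supp-natural} and joint surjectivity of the pushout legs) for principality, the same Conduché-plus-lifting-uniqueness argument to build the inverse $H'$ for primitivity, and the same pushout construction transported along an isomorphism for the converse, all organized within the same mutual induction. The only cosmetic differences are that you verify $H\circ H'=1$ and $H'\circ H=1$ by joint surjectivity on generators where the paper invokes \Cref{lem:principal-parallel-eq}, and you make the well-foundedness bookkeeping explicit.
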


\begin{proof}
  Suppose that \Cref{lem:polyplex-comp:pushout} holds.
  We have
  \begin{align*}
    \supp(\freecat G_1(u_1) \pcomp_i \freecat G_2(u_2))
    &= \supp(\freecat G_1(u_1)) \cup
    \supp(\freecat G_2(u_2)) \\
    &= G_1(\supp(u_1)) \cup G_2(\supp(u_2)) \\
    & = G_1(\poltoset{U_1}) \cup G_2(\poltoset{U_2})\\
    &= \poltoset{U} 
  \end{align*}
  thus $(U,u)$ is principal by \Cref{lem:principal-charact}. Now, consider $H
  \co (\R,w) \to (U,u) \in \Elt(\oPolb)$ with $(\R,w)$ principal. We have
  \[
    H(\poltoset{\R}) = H(\supp(w)) =
    \supp(\freecat H(w)) = \supp(u) = \poltoset{U}
  \]
  so that the functions $H_j \co \R_j \to U_j$ are surjective for every $j$.
  Thus, $H$ is an epimorphism. Since $\freecat H$ is Conduché
  by~\Cref{prop:conduche} and $\freecat H(w) = \freecat G_1(u_1) \pcomp_i
  \freecat G_2(u_2)$, there exist unique $w_1,w_2$ such that $\freecat H(w_j) =
  \freecat G_j(u_j)$ for $j \in \set{1,2}$ and $w = w_1 \pcomp_i w_2$. By
  induction hypothesis on \Cref{prop:primitive-lifting}, there exist polyplex
  liftings $H'_j \co (\tilde U_j,\tilde u_j) \to (\R,w_j)$ for $j \in
  \set{1,2}$. By induction hypothesis on \Cref{prop:polyplex-lifting-unique},
  since both $(\tilde U_j,\tilde u_j)$ and $(U_j,u_j)$ are polyplex liftings of
  $(U,\freecat G_j(u_j))$, we may assume that $(\tilde U_j,\tilde u_j)
  =(U_j,u_j)$ for $j \in \set{1,2}$. By \Cref{lem:principal-parallel-eq}, we
  have $H'_1 \circ F_1 = H'_2 \circ F_2$, so that we obtain $H' \co U \to \R$
  from the pushout. We compute that
  \[
    H'(u) = H'(\freecat G_1(u_1) \pcomp_i \freecat G_2(u_2)) = H'_1(u_1)
    \pcomp_i H'_2(u_2) = w_1 \pcomp_i w_2 = w\zbox.
  \]
  Thus, using \Cref{lem:principal-parallel-eq}, we easily have that $H' \circ H
  = \unit {\R}$ and $H \circ H' = \unit{U}$. Hence, $(U,u)$ is primitive.

  Conversely, suppose that \Cref{lem:polyplex-comp:comp} holds. Then, by
  induction hypothesis on \Cref{prop:primitive-lifting}, there exist $G_k \co
  (U_k,\bar u_k) \to (\P,u_k)$ with $(U_k,\bar u_k)$ primitive for $k \in
  \set{1,2}$. By induction hypothesis on \Cref{prop:primitive-lifting}, there
  exist $\tilde F_k \co (\tilde U_k,\tilde u_k) \to
  (U_k,\csrctgt{\eps(k)}_i(u_k))$ with $(\tilde U_k,\tilde u_k)$ primitive for
  $i \in \set{1,2}$ and $\eps(1) = +$ and $\eps(2) = -$. In particular, $(\tilde
  U_1,\tilde u_1)$ and $(\tilde U_2,\tilde u_2)$ are both polyplex liftings of
  $(U,\ctgt_i(u_1))$. By induction hypothesis on
  \Cref{prop:polyplex-lifting-unique}, we can assume that $(\tilde U_1,\tilde
  u_1) = (\tilde U_2,\tilde u_2)$ and write $(\tilde U,\tilde u)$ for this
  element. By \Cref{lem:principal-parallel-eq}, we have $G_1 \circ F_1 = G_2
  \circ F_2$. Consider the pushout
  \[
    \begin{tikzcd}
      \tilde U
      \ar[d,"F_1"']
      \ar[r,"F_2"]
      &
      U_2
      \ar[d,"\bar G_2",dotted]
      \\
      U_1
      \ar[r,"\bar G_1"',dotted]
      &
      \bar U
    \end{tikzcd}
  \]
  By its universal property, we get a morphism $H \co \bar U \to U$ from $G_1$
  and $G_2$. By the first implication, $(\bar U,\freecat G_1(\bar u_1) \pcomp_i
  \freecat G_2(\bar u_2))$ is a primitive element. Moreover, $H$ induces a
  morphism
  \[
    H \co (\bar U,\freecat G_1(\bar u_1) \comp_i \freecat G_2(\bar u_2)) \to (U,u_1\pcomp_i u_2)
  \]
  of $\Elt(\oPolb)$. Thus, since $(U,u_1\pcomp_i u_2)$ is primitive, $H$
  is an isomorphism.
\end{proof}

\noindent
The previous lemmas lead to the following polyplex lifting existence property:

\begin{prop}
  \label{prop:primitive-lifting}
  Given an element $(\P,u)\in\Elt(\oPolb)$, there exists a polyplex lifting
  \[
    F \co (U,\bar u) \to (\P,u)
  \]
  where~$(U,\bar u)$ is primitive.
\end{prop}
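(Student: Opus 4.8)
The plan is to prove this by an induction carried out mutually with \Cref{lem:polyplex-gen}, \Cref{lem:polyplex-comp} and the companion uniqueness statement for polyplex liftings (\Cref{prop:polyplex-lifting-unique}). The induction parameter is the dimension $n$ of $u$ together with, at fixed $n$, an expression defining $u$ as a cell of $\freecat\P$ built from generators, identities and compositions; such an expression is available by \Cref{thm:canonical-form}. I would then distinguish three cases according to the outermost constructor of the chosen expression. When $u = \unit{u'}$, the induction hypothesis supplies a polyplex lifting $F \co (U,\bar u') \to (\P,u')$ with $(U,\bar u')$ primitive; by \Cref{lem:polyplex-unit} the element $(U,\unit{\bar u'})$ is again primitive, and since $\freecat F(\unit{\bar u'}) = \unit{u'}$ the same morphism $F$ is the required lifting.

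When $u = u_1 \pcomp_i u_2$, I would apply the induction hypothesis to $u_1$ and $u_2$ to obtain polyplex liftings $G_j \co (U_j,\bar u_j) \to (\P,u_j)$ for $j \in \set{1,2}$, and also to the common boundary cell $\ctgt_i(u_1) = \csrc_i(u_2)$, which is legitimate since this cell has strictly smaller dimension. Using \Cref{prop:polyplex-lifting-unique} together with \Cref{lem:principal-parallel-eq} (the boundary polyplex has principal domain), I would arrange that the boundary polyplex maps compatibly into $U_1$ and $U_2$, yielding a commuting square; its pushout in $\oPol$ is the candidate $(\bar U,\bar u)$, which is a polyplex by \Cref{lem:polyplex-comp}, and the universal property of the pushout applied to $G_1$ and $G_2$ produces the comparison morphism $(\bar U,\bar u) \to (\P,u)$ in $\Elt(\oPolb)$.

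The case $u = g$ a generator is entirely analogous, one dimension down: I would apply the induction hypothesis to $\csrctgt\eps(g)$ for $\eps \in \set{-,+}$ to get polyplex liftings $G^\eps \co (U^\eps,u^\eps) \to (\P,\csrctgt\eps(g))$, and, invoking the induction hypothesis once more on the iterated boundaries and using \Cref{prop:polyplex-lifting-unique}, I would choose polyplex liftings of $\csrc(u^\eps)$ and $\ctgt(u^\eps)$ factoring through common principal elements $(S,s)$ and $(T,t)$ exactly as in the hypothesis of \Cref{lem:polyplex-gen}. Forming the associated pushout $\partial U$ and adjoining a generator $\bar\alpha \co \bar G^-(u^-) \to \bar G^+(u^+)$ produces a polyplex by \Cref{lem:polyplex-gen}; sending $\bar\alpha$ to $g$ (which is well defined because $\freecat{G^\eps}(u^\eps) = \csrctgt\eps(g)$) and using the universal property of $\partial U$ gives the required lifting $(\bar U,\bar\alpha) \to (\P,g)$.

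I expect the main obstacle to be the organisation of this nested induction rather than any individual step: passing to sources and targets decreases the dimension but says nothing about the size of a defining expression, so the outer induction on dimension is indispensable and has to be threaded carefully through the mutual recursion with \Cref{lem:polyplex-gen}, \Cref{lem:polyplex-comp} and \Cref{prop:polyplex-lifting-unique}. In particular, one must verify that in the composite and generator cases uniqueness of polyplex liftings is already available for exactly the lower-dimensional boundary cells on which it is invoked when identifying the shared boundary polyplexes, so that the mutual induction is genuinely well-founded.
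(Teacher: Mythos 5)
Your proposal is correct and follows essentially the same route as the paper: a mutual induction (with \Cref{lem:polyplex-gen}, \Cref{lem:polyplex-comp} and \Cref{prop:polyplex-lifting-unique}) by case analysis on a formula for $u$, using \Cref{lem:polyplex-unit} for identities, the pushout of boundary polyplexes identified via \Cref{prop:polyplex-lifting-unique} and \Cref{lem:principal-parallel-eq} for composites, and the $\partial U$ construction of \Cref{lem:polyplex-gen} for generators. The paper's proof is exactly this argument, including the well-foundedness point you flag about invoking uniqueness only on lower-dimensional boundary cells.
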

\begin{proof}
  We reason by case analysis on a formula for~$u$.
  \begin{itemize}
  \item If $u = \unit {u'}$, then, by \Cref{lem:polyplex-unit}, the conclusion
    follows from induction hypothesis.
  \item If $u = u_1 \pcomp_i u_2$, then, by induction hypothesis, there are
    morphisms
    \[
      G^k \co (U^k,\bar u_k) \to (\P,u)
    \]
    with $(U^k,\bar u_k)$ primitive for $k \in \set{1,2}$. By induction
    hypothesis, there are polyplex liftings
    \[
      F^k \co (\tilde U^k,\tilde u_k) \to (U^k,\csrctgt{\eps(k)}_i(\bar u_k))
    \]
    with $\eps(1) = +$ and $\eps(2) = -$. By induction hypothesis on
    \Cref{prop:polyplex-lifting-unique}, we can assume that
    $(\tilde U^1,\tilde u_1) = (\tilde U^2,\tilde u_2)$ and write
    $(\tilde U,\tilde u)$ for this element. Since $(\tilde U,\tilde u)$ is
    principal, we have $G^1 \circ F^1 = G^2 \circ F^2$.  Consider the pushout
    \[
      \begin{tikzcd}
        \tilde U
        \ar[d,"F^1"']
        \ar[r,"F^2"]
        &
        U^2
        \ar[d,"\bar G^2",dotted]
        \\
        U^1
        \ar[r,"\bar G^1"',dotted]
        &
        \bar U
      \end{tikzcd}
    \]
    Then, by~\Cref{lem:polyplex-comp},
    $(\bar U,\bar G^1(\bar u_1) \pcomp_i \bar G^2(\bar u_2))$ is a polyplex, and
    the universal property of pushouts gives a polyplex lifting
    \[
      H \co (\bar U,\bar G^1(\bar u_1) \pcomp_i \bar G^2(\bar u_2)) \to (\P,u)
      \zbox.
    \]
  \item If $u = \alpha$ for some generator $\alpha \in \P$, by induction, there
    are polyplex liftings
    \[
      G^\eps \co (U^\eps,u^\eps) \to (\P,\csrctgt\eps(u))
    \]
    for $\eps \in \set{-,+}$. By induction on~\Cref{prop:primitive-lifting},
    there exists a polyplex lifting
    \[
      F^{--} \co (S,s) \to (U^-,\csrc(u^-))
      \zbox.
    \]
    Similarly, there is a polyplex lifting of $(U^+,\csrc(u^+))$ and, since
    \[
      \freecat {(G^-)}(\csrc(u^-)) = \freecat{(G^+)}(\csrc(u^+))
    \]
    by~\Cref{prop:polyplex-lifting-unique}, it can be chosen to be of the form
    \[
      F^{+-} \co (S,s) \to (U^+,\csrc(u^+))
      \zbox.
    \]
    Similarly, there are polyplex liftings
    \[
      F^{-+} \co (T,t) \to (U^-,\ctgt(u^-))
      \qqtand
      F^{++} \co (T,t) \to (U^+,\ctgt(u^+))
      \zbox.
    \]
    Writing $F^\eps$ for $[F^{\eps-},F^{\eps+}]$ for $\eps \in \set{-,+}$,
    consider the pushout
    \[
      \begin{tikzcd}
        S \sqcup T
        \ar[r,"F^+"]
        \ar[d,"F^+"']
        &
        U^+
        \ar[d,"\bar G^+",dotted]
        \\
        U^-
        \ar[r,"\bar G^-"',dotted]
        &
        \partial U
      \end{tikzcd}
    \]
    and write $U$ for the \opol obtained from $\partial U$ by adding a
    generator
    \[
      \bar \alpha : \bar G^-(u^-) \to \bar G^+(u^+)
    \]
    (this is well-defined, since the definition of $\partial U$ ensures that
    $\csrctgt\eps(\bar G^-(u^-)) = \csrctgt\eps(\bar G^+(u^+))$ for
    $\eps \in \set{-,+}$). By \Cref{lem:polyplex-gen}, $(U,\bar \alpha)$ is a
    polyplex, and we easily deduce a polyplex lifting
    $H \co (U,\bar \alpha) \to (\P,\alpha)$.\qedhere
  \end{itemize}
\end{proof}

\noindent
Finally, we have the following uniqueness property of polyplex liftings:

\begin{prop}
  \label{prop:polyplex-lifting-unique}
  Given two morphisms~$L^1\co (U^1,u_1) \to (\P,u)$ and~$L^2\co (U^2,u_2) \to (\P,u)$
  of~$\Elt(\oPolb)$ where both~$(U^1,u_1)$ and~$(U^2,u_2)$ are primitive, there is an
  isomorphism~$\Theta\co (U^1,u_1) \to (U^2,u_2)$ such that~$L^2 \circ \Theta = L^1$.
\end{prop}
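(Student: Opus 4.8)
The plan is to prove uniqueness of polyplex liftings by the same mutual induction on a formula defining the cell $u$ that underlies Lemmas~\ref{lem:polyplex-unit}, \ref{lem:polyplex-gen} and \ref{lem:polyplex-comp} together with \Cref{prop:primitive-lifting}; in each case I unpack the primitive elements $(U^1,u_1)$ and $(U^2,u_2)$ via the structural characterizations just proved, build the comparison isomorphism $\Theta$ out of the data obtained from the induction hypotheses, and check $L^2 \circ \Theta = L^1$ using the rigidity of principal elements (\Cref{lem:principal-parallel-eq}) and the Conduché property (\Cref{prop:conduche}).

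First I would treat the base-and-identity case: if $u = \unit{u'}$, then by \Cref{prop:freecat-functors}\ref{prop:freecat-functors:unit} both $u_1$ and $u_2$ are identities $\unit{u'_1}$, $\unit{u'_2}$, and by compatibility of $\freecat{L^j}$ with $\csrc$ we get morphisms $L^j \co (U^j, u'_j) \to (\P, u')$; by \Cref{lem:polyplex-unit} these $(U^j,u'_j)$ are primitive, so the induction hypothesis yields $\Theta \co (U^1,u'_1) \to (U^2,u'_2)$, and the same underlying polygraph isomorphism sends $\unit{u'_1}$ to $\unit{u'_2}$ since prefunctors preserve identities; the equation $L^2 \circ \Theta = L^1$ transfers from the lower-dimensional equation. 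For the composite case $u = u_1' \pcomp_i u_2'$: by \Cref{lem:polyplex-comp} each $(U^j, u_j)$ decomposes as a pushout of polyplexes $(U^j_1, u^j_1)$, $(U^j_2, u^j_2)$ lifting $u_1'$ and $u_2'$ respectively, glued along a principal element $(U^{j}{}', u^{j}{}')$; by the induction hypothesis (applied to \Cref{prop:polyplex-lifting-unique} in lower complexity) there are isomorphisms $\Theta_k \co (U^1_k, u^1_k) \to (U^2_k, u^2_k)$ over $(\P, u_k')$, and the gluing data ($\freecat{F_k}(u') = \ctgt_i(u_1')$ resp.\ $\csrc_i(u_2')$) together with \Cref{lem:principal-parallel-eq} forces $\Theta_1, \Theta_2$ to agree on the shared principal piece, so the universal property of the pushout assembles them into $\Theta \co (U^1,u_1)\to(U^2,u_2)$; it is an isomorphism because its inverse is assembled the same way, and $L^2\circ\Theta = L^1$ holds because it holds on $U^1_1$ and $U^1_2$ and these jointly cover $U^1$ (or again by \Cref{lem:principal-parallel-eq}, since a polyplex is principal). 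The generator case $u = \alpha$ is analogous, using \Cref{lem:polyplex-gen}: the boundary spheres $(U^{j,\eps}, u^{j,\eps})$ are polyplex liftings of $\csrctgt\eps(u)$ and hence identified by the induction hypothesis, these identifications are compatible with the attaching maps $F^{\eps\pm}$ by \Cref{lem:principal-parallel-eq}, so they descend to the pushouts $\partial U^j$ and then extend over the adjoined generator, sending $\bar\alpha^1$ to $\bar\alpha^2$.

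The main obstacle I expect is organizational rather than conceptual: making the mutual induction airtight. The statements \Cref{lem:polyplex-gen}, \Cref{lem:polyplex-comp}, \Cref{prop:primitive-lifting} and \Cref{prop:polyplex-lifting-unique} all refer to one another, so I must fix a well-founded measure — the obvious choice is the normal-form expression of the cell $u$ from \Cref{thm:canonical-form}, with identities, composites and generators each strictly decreasing the relevant data — and verify that every appeal to an induction hypothesis in the four proofs is on a strictly smaller instance. In particular I need that when I invoke the lemmas to decompose $(U^1,u_1)$ and $(U^2,u_2)$, the cell $u_1 = u_2 = u$ appearing there is the \emph{same} $u$, and it is the uniqueness statement at that same $u$ that is being proved, so the recursion must go through the \emph{sub}cells ($\csrctgt\eps(u)$, $u_1'$, $u_2'$, the contexts' $l$'s and $r$'s) that are genuinely simpler. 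A secondary point of care is that $\freecat{L^1}$ and $\freecat{L^2}$ are Conduché (\Cref{prop:conduche}), which is what guarantees that the structural decomposition of the target $u$ pulls back compatibly through both $L^1$ and $L^2$; without it the two liftings could sit over incompatible decompositions and no comparison $\Theta$ would exist. Once the bookkeeping is set up, each individual case is a short diagram chase, and uniqueness of $\Theta$ itself (were it needed) is immediate from \Cref{lem:principal-parallel-eq} since the source of $\Theta$ is principal.
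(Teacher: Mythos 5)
Your proposal follows essentially the same route as the paper: a case analysis on a formula for $u$, reducing the identity case by \Cref{lem:polyplex-unit}, and in the generator and composite cases unpacking both primitives via the pushout descriptions of \Cref{lem:polyplex-gen,lem:polyplex-comp}, identifying the pieces by the inductive uniqueness statement, gluing via \Cref{lem:principal-parallel-eq} and the pushout's universal property, and obtaining the inverse symmetrically. Your explicit attention to the well-foundedness of the mutual induction and to the role of the Conduché property in transporting the decomposition of $u$ along $L^1$ and $L^2$ only makes explicit what the paper leaves implicit.
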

\begin{proof}
  We reason by case analysis on a formula for~$u$.
  \begin{itemize}
  \item If $u = \unit {u'}$, then the conclusion follows from induction hypothesis on~$u$.
  \item If $u = \alpha$ for some $\alpha \in \P$, then,
    by~\Cref{lem:polyplex-gen}, $U^1$ and $U^2$ are obtained by adding
    respective top-level generators $\alpha^1$ and $\alpha^2$ to polygraphs
    $\partial U^1$ and $\partial U^2$, the latter being expressed as pushouts
    \[
      \begin{tikzcd}[column sep=20mm]
        S^i \sqcup T^i
        \ar[r,"{[F^{i,+-},F^{i,++}]}"]
        \ar[d,"{[F^{i,--},F^{i,-+}]}"']
        &
        U^{i,+}
        \ar[d,"\bar G^{i,+}",dotted]
        \\
        U^{i,-}
        \ar[r,"\bar G^{i,-}"',dotted]
        &
        \partial U^i
      \end{tikzcd}
    \]
    for some principal $(S^i,s^i)$, $(T^i,t^i)$ and some primitive
    $(U^{i,-},u^{i,-})$, $(U^{i,+},u^{i,+})$ for $i \in \set{1,2}$ as in the
    statement of that lemma. In particular, $(U^{i,\eps},u^{i,\eps})$ are
    polyplex liftings of $\csrctgt\eps(\alpha)$ for $i \in \set{1,2}$ and~$\eps
    \in \set{-,+}$. By induction hypothesis, for $\eps \in \set{-,+}$, there are
    isomorphisms $\Theta^\eps \co (U^{1,\eps},u^{1,\eps}) \to
    (U^{2,\eps},u^{2,\eps})$. Since $(S^1,s^1)$ and $(T^1,t^1)$ are principal,
    we can easily verify with \Cref{lem:principal-parallel-eq} that
    \[
      G^{2,-} \circ \Theta^- \circ [F^{1,--},F^{1,-+}] = G^{2,+} \circ \Theta^+
      \circ [F^{1,+-},F^{1,++}]
    \]
    so that we get a morphism $\partial\Theta \co \partial U^1 \to \partial
    U^2$, which extends to a morphism $\Theta \co U^1 \to U^2$ such that
    $\Theta(\alpha^1) = \alpha^2$. Symmetrically, a morphism $\Theta' \co
    (U^2,\alpha^2) \to (U^1,\alpha^1)$ can be built.
    Using~\Cref{lem:principal-parallel-eq}, we easily verify that $\Theta$ and
    $\Theta'$ are inverse of each other.
  \item If $u = u_1 \pcomp_i u_2$, we use the pushout description
    from~\Cref{lem:polyplex-gen} and this case is then handled just like the
    previous one.\qedhere
  \end{itemize}
\end{proof}
\begin{rem}
  \label{rem:polconcrb-fam-rep}
  %
  A consequence of the existence and unicity properties above, together with
  \Cref{lem:principal-parallel-eq}, is that the functor
  $\polconcrb-\co \nPol\omega \to \Set$ of \Cref{ex:polconcrb} is
  \emph{familially representable}~\cite{carboni1995connected}, \ie can be
  expressed as a functor of the form
  \[
    \bigsqcup_{i \in I} \Hom(U^i,-) \co \oPol \to \Set\zbox.
  \]
  Here, $I$ is a set of representatives $(U^i,u^i)$ of all polyplexes
  (considered up to isomorphism of elements in $\Elt(\oPolb)$) of any
  dimension. Those can for instance be enumerated by constructing one polyplex
  liftings for each cell of the free precategory on the terminal polygraph. A
  similar description holds for the functor $\freecat -_k$, mapping a polygraph
  to the set of $k$\cells of the associated free precategory: the family $I$ is
  now a set of representatives for the polyplexes of dimension $k$ up to
  isomorphism.
\end{rem}

\begin{rem}
  A consequence of the canonicity of a polyplex liftings given by the above
  properties is that one can define a \eq{polyplex measure} on the cells of free
  precategories. Let $\P \in \oPol$, and write $\Z\P$ for the free $\Z$-module
  on $\poltoset\P$. Given $u \in \freecat\P$, one can define $\meas[\P](u)$ as
  follows. Consider a polyplex lifting $F \co (V,v) \to (\P,u)$ and define
  $S_V \in \Z V$ by $S_V = \sum_{g\in V} g$. Then, one defines $\meas[\P](u)$ as
  $\Z F(S_V)$. The definition of $\meas[\P](u)$ does not depend on the choice of
  $(V,v)$ by \Cref{lem:principal-parallel-eq,prop:polyplex-lifting-unique}. The
  question of the existence of a similar measure for free strict categories was
  raised by Makkai in~\cite{makkai2005word}. Later, using the standard
  Eckmann--Hilton for strict categories, the non-existence of such a measure was
  proven~\cite[Proposition 2.5.2.13]{forest:tel-03155192}.
\end{rem}



\section{Polygraphs as a presheaf category}
\label{sec:presheaf}
We can now use the results of the previous section in order to conclude that
$\oPol$ is a (concrete) presheaf category on the base category (also called
shape category) of \emph{plexes}, which are the elementary shapes polygraphs are
made of. In addition to the works of Burroni~\cite{burroni2012automates} and
Henry~\cite{henry2018nonunital}, this notion was also studied by
Makkai~\cite{makkai2005word} under the name ``computopes''.

Formally, a \emph{plex} is an element $(\P,u)\in\Elt(\oPol)$ which is primitive
(for the concrete structure introduced in \Cref{ex:pol-concrete}). Given an
element $(\Q,v)$ in $\Elt(\oPol)$, a \emph{plex lifting} is the data of a plex
$(\P,u)$ and a morphism of elements $F \co (\P,u) \to (\Q,v)\in\Elt(\oPol)$.

In order to relate the properties of plexes to the ones of polyplexes proved in
the previous section, we first need to briefly discuss the link between
$\Elt(\oPol)$ and $\Elt(\oPolb)$. We write~$\poltopolb\co \Elt(\oPol) \to
\Elt(\oPolb)$ for the canonical embedding. First note that, as a consequence of
\Cref{prop:freecat-functors}\ref{prop:freecat-functors:gen}, that
\begin{lem}
  \label{lem:U-fully-faithful}
  The functor $\mfunctorb U$ is fully faithful.
\end{lem}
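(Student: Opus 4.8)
The plan is to show that $\poltopolb$ is full and faithful by exhibiting, for any two elements $(\P,u),(\Q,v)$ of $\Elt(\oPol)$, a bijection between the hom-sets $\Elt(\oPol)((\P,u),(\Q,v))$ and $\Elt(\oPolb)(\poltopolb(\P,u),\poltopolb(\Q,v))$. Since $\poltopolb$ acts as the identity on underlying polygraphs, an element of either hom-set is a morphism $F \co \P \to \Q$ of $\oPol$ satisfying a condition on the image of $u$; faithfulness is therefore immediate, and the content lies entirely in fullness. So the real task is: given $F \co \P \to \Q$ with $\freecat F(u) = v$ (the condition defining a morphism in $\Elt(\oPolb)$, where $u,v$ live in the free precategories), show that $F$ is already a morphism in $\Elt(\oPol)$, i.e.\ that $\poltoset F(u) = v$ in the naive sense --- but this only makes sense when $u \in \poltoset\P$ and $v \in \poltoset\Q$, that is, when $u$ and $v$ are \emph{generators}, which is exactly the situation for objects in the image of $\poltopolb$.

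First I would spell out the objects: an object of $\Elt(\oPol)$ is a pair $(\P,u)$ with $u$ a generator of $\P$ (an element of some $\P_k$), whereas $\poltopolb$ sends it to $(\P,\polinj k(u))$, viewing $u$ as a cell of $\freecat\P$. Then, given a morphism $F$ in $\Elt(\oPolb)$ from $\poltopolb(\P,u)$ to $\poltopolb(\Q,v)$ --- so $F \co \P \to \Q$ with $\freecat F(\polinj k(u)) = \polinj k(v)$ --- I would invoke \Cref{prop:freecat-functors}\ref{prop:freecat-functors:gen}: since $\polinj k(u)$ is (the image of) a generator of $\P$ and $\freecat F$ maps it to (the image of) the generator $v$ of $\Q$, and more precisely since $\freecat F(\polinj k(u)) = \polinj k(F_k(u))$ by naturality of the embeddings (using \Cref{rem:polinj-injective}), injectivity of $\polinj k^\Q$ forces $F_k(u) = v$. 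Hence $F$ is a morphism $(\P,u) \to (\Q,v)$ in $\Elt(\oPol)$, and its image under $\poltopolb$ is the original morphism. This establishes fullness.

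The main obstacle, such as it is, is mostly bookkeeping: keeping straight the distinction between a generator $u \in \P_k$ and its image $\polinj k(u) \in \freecat\P_k$, and between the two concretization functors $\poltoset-$ and $\polconcrb-$ that define the two categories of elements. One must also check that $\poltopolb$ is well-defined as a functor in the first place (which is routine: a morphism of $\Elt(\oPol)$ satisfies $\poltoset F(u) = v$, i.e.\ $F_k(u) = v$, hence $\freecat F(\polinj k(u)) = \polinj k(v)$ by naturality, so it is a morphism of $\Elt(\oPolb)$). Once the notational dust settles, \Cref{prop:freecat-functors}\ref{prop:freecat-functors:gen} together with \Cref{rem:polinj-injective} does all the work, and no genuinely hard step remains.
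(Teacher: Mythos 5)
Your proof is correct and takes essentially the same route as the paper, which states the lemma as an immediate consequence of \Cref{prop:freecat-functors}\ref{prop:freecat-functors:gen} without spelling out the details. Your bookkeeping — faithfulness because $\poltopolb$ is the identity on underlying polygraph morphisms, fullness via naturality of the embeddings $\polinj{k}$ together with their injectivity (\Cref{rem:polinj-injective}) — is exactly what that one-line justification leaves implicit.
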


\noindent
We then have the following.

\begin{prop}
  \label{prop:embedding-pol-polb}
  Let~$(\P,g) \in \Elt(\oPol)$. Then
  \begin{enumerate}[(1)]
  \item \label{prop:embedding-pol-polb:princ} $(\P,g)$ is principal if and only
    if~$\mfunctorb U(\P,g)$ is principal,
    
  \item \label{prop:embedding-pol-polb:plex} $(\P,g)$ is a plex if and only if~$\mfunctorb U(\P,g)$ is a polyplex.
  \end{enumerate}
\end{prop}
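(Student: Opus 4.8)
The plan is to prove both statements by exploiting the relation between the two concretization functors $\poltoset-$ and $\polconcrb-$ on $\oPol$, which differ only in that the former counts generators and the latter counts all cells of the generated free precategory. The key observation is that the embedding $\mfunctorb U\co\Elt(\oPol)\to\Elt(\oPolb)$ sends an element $(\P,g)$ with $g$ a generator (an element of some $\P_k$) to the element $(\P,\polinj k(g))$ where now $g$ is regarded as a cell of $\freecat\P$; by \Cref{rem:polinj-injective} this is injective, and by \Cref{lem:U-fully-faithful} the functor is fully faithful, so monomorphisms and isomorphisms are transported back and forth.

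First I would handle \ref{prop:embedding-pol-polb:princ}. Both notions of principality are characterized by the support condition: by \Cref{lem:principal-charact}, $(\P,g)\in\Elt(\oPolb)$ is principal if and only if $\supp(g)=\poltoset\P$. For $(\P,g)\in\Elt(\oPol)$ with $g$ a $k$-generator, principality means that every monomorphism $F\co(\Q,h)\to(\P,g)$ of $\Elt(\oPol)$ is an isomorphism. By \Cref{prop:freecat-functors}\ref{prop:freecat-functors:gen}, such an $F$ forces $h$ to be a generator too, and by \Cref{prop:charact-ppol-mono} monomorphisms of $\oPol$ are exactly dimensionwise injections, which coincide with monomorphisms of $\oPolb$; hence the lifting problems defining principality in $\Elt(\oPol)$ and in $\Elt(\oPolb)$ are the same lifting problems (using full faithfulness of $\mfunctorb U$ and \Cref{prop:freecat-functors} to see that the candidate sub-elements in $\oPolb$ are automatically generators). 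Thus $(\P,g)$ is principal in $\Elt(\oPol)$ iff $\mfunctorb U(\P,g)$ is principal in $\Elt(\oPolb)$. Alternatively, and perhaps more cleanly, one shows directly that both are equivalent to $\supp(g)=\poltoset\P$, the support of a generator being computed recursively from its iterated sources and targets.

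Next, for \ref{prop:embedding-pol-polb:plex}: a plex is a primitive element of $\Elt(\oPol)$ and a polyplex a primitive element of $\Elt(\oPolb)$; primitive means principal together with the condition that every morphism from a principal element is an isomorphism. Given \ref{prop:embedding-pol-polb:princ}, it remains to compare the second condition. Suppose $\mfunctorb U(\P,g)$ is a polyplex. To see $(\P,g)$ is a plex, take a morphism $F\co(\Q,h)\to(\P,g)$ in $\Elt(\oPol)$ with $(\Q,h)$ principal; then $\mfunctorb U(\Q,h)$ is principal by \ref{prop:embedding-pol-polb:princ}, so $\mfunctorb U(F)$ is an isomorphism in $\Elt(\oPolb)$, hence $F$ is an isomorphism in $\Elt(\oPol)$ by full faithfulness of $\mfunctorb U$ (\Cref{lem:U-fully-faithful}). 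Conversely, suppose $(\P,g)$ is a plex; I must show $\mfunctorb U(\P,g)$ is primitive. It is principal by \ref{prop:embedding-pol-polb:princ}. Now take any morphism $H\co(\R,w)\to(\P,g)$ in $\Elt(\oPolb)$ with $(\R,w)$ principal; here $w$ need not a priori be a generator, so this is where the real work lies. But $\freecat H(w)=g$ is a generator, so by \Cref{prop:freecat-functors}\ref{prop:freecat-functors:gen} (applied to $\freecat H$) $w$ is in fact a generator of $\R$; thus $H$ lies in the image of $\mfunctorb U$ up to the identification above, and $(\R,w)$ being principal in $\Elt(\oPolb)$ means (by \ref{prop:embedding-pol-polb:princ}) that it is principal in $\Elt(\oPol)$, so primitivity of $(\P,g)$ makes $H$ an isomorphism.

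The main obstacle I anticipate is precisely the last point: showing that an arbitrary principal element $(\R,w)$ of $\Elt(\oPolb)$ mapping to a \emph{generator} must itself have $w$ a generator, rather than a nontrivial composite or identity. This is exactly what \Cref{prop:freecat-functors} delivers — the image of an identity under a free functor is an identity and the image of a generator is a generator — so the obstacle dissolves once that proposition is invoked in the right direction; the remaining bookkeeping (transporting monos and isos through the fully faithful $\mfunctorb U$, and matching the support characterizations) is routine. I would therefore organize the proof as: establish \ref{prop:embedding-pol-polb:princ} via \Cref{lem:principal-charact} and the support of generators, then deduce \ref{prop:embedding-pol-polb:plex} by the two implications above, each time reducing a lifting problem over $\oPolb$ to one over $\oPol$ using \Cref{lem:U-fully-faithful} and \Cref{prop:freecat-functors}.
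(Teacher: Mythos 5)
Your proposal is correct and follows essentially the same route as the paper: part (1) via the common support characterization of \Cref{lem:principal-charact}, the forward direction of part (2) by transporting principality through $\mfunctorb U$ and using full faithfulness to reflect isomorphisms, and the converse by invoking \Cref{prop:freecat-functors}\ref{prop:freecat-functors:gen} to see that any principal element of $\Elt(\oPolb)$ mapping to a generator already lies in the image of $\mfunctorb U$. The "main obstacle" you single out is exactly the step the paper resolves the same way.
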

\begin{proof}
  By \Cref{lem:principal-charact},~\ref{prop:embedding-pol-polb:princ} holds.
  Suppose now that both~$(\P,g)$ and~$\mfunctorb U(\P,g)$ are principal. By
  \Cref{prop:freecat-functors}\ref{prop:freecat-functors:gen},~$\mfunctorb U$ is
  fully faithful, so that it reflects isomorphisms. Thus, if~$\mfunctorb
  U(\P,g)$ is a polyplex, then~$(\P,g)$ is a plex. For the converse, note that
  if~$f\co (\Q,v) \to \mfunctorb U(\P,g)$ is a morphism of~$\Elt(\oPolb)$,
  then, by \Cref{prop:freecat-functors}\ref{prop:freecat-functors:gen},~$v \in
  \Q$, so that
  \[
    (\Q,v)
    = \mfunctorb U(\Q,v)
    \qtand
    f = \mfunctorb U(f)\zbox.
  \]
  Hence, if~$(\P,g)$ is a plex, then~$\mfunctorb U(\P,g)$ is a polyplex.
\end{proof}

\begin{theo}
  \label{thm:opol-concr-ps}
  The category $\oPol$ is a concrete presheaf category.
\end{theo}
\begin{proof}
  We verify that the various conditions of Makkai's criterion
  (\cref{thm:concrete-charact}) are satisfied.
  \begin{enumerate}
  \item[\ref{thm:concrete-charact:iso}] Clear from the definition of $\oPol$.
  \item[\ref{thm:concrete-charact:colimits}] A consequence of general properties
    satisfied by categories of polygraphs derived from a globular monad (see
    Propositions 1.3.3.7 and 1.3.3.15 of \cite{forest:tel-03155192}).
  \item[\ref{thm:concrete-charact:small}] Since a primitive element~$(\P,g)$ is
    principal, the polygraph~$\P$ is finite. Thus, up to isomorphism, the sets
    $\P_i$ can be assumed to be subsets of $\N$. So
    that~\ref{thm:concrete-charact:small} holds.
  \item[\ref{thm:concrete-charact:prim-exists}] Given an element
    $(\P,g) \in \Elt(\oPol)$, by \Cref{prop:primitive-lifting}, there exists a
    polyplex lifting $F \co (U,u) \to (\P,g)$ of $\mfunctorb U (\P,g)$. By
    \Cref{prop:freecat-functors}\ref{prop:freecat-functors:gen}, we have that
    $u \in U$. Moreover, by
    \Cref{prop:embedding-pol-polb}\ref{prop:embedding-pol-polb:plex}, we have
    that $(U,u)$ is a plex, so~\ref{thm:concrete-charact:prim-exists} holds.
  \item[\ref{thm:concrete-charact:prim-equal}] Given
    $f,g\co (U,u) \to (X,x) \in \Elt(\oPol)$ with $(U,u)$ a primitive plex, then
    we have that
    $\mfunctorb U (f),\mfunctorb U(g)\co (U,u) \to (X,x) \in \Elt(\oPolb)$, so
    that $\mfunctorb U (f),\mfunctorb U(g)$ by \Cref{lem:principal-parallel-eq},
    and $f = g$ by faithfulness.
  \item [\ref{thm:concrete-charact:prim-iso}] Given two
    morphisms~$f\co (U,u) \to (X,x)$ and~$g\co (V,v) \to (X,x)$ of~$\Elt(\oPol)$
    where both~$(U,u)$ and~$(V,v)$ are primitive, we have by
    \Cref{prop:polyplex-lifting-unique} that there is an isomorphism
    \[
      \theta\co \poltopolb (U,u) \to \poltopolb (V,v) \in \Elt(\oPolb)
    \]
    such that~$\poltopolb (g) \circ \theta = \poltopolb (f)$. We conclude by the
    full faithfulness of $\poltopolb$.\qedhere
  \end{enumerate}
\end{proof}
\begin{rem}
  Following Makkai's proof of \cite[Theorem 4]{makkai2005word}, the base
  category of the presheaf category given by the above theorem is a small full
  subcategory of $\oPol$, whose objects are (the underlying polygraphs of)
  plexes, and such that every (underlying polygraph of a) plex is isomorphic to
  exactly one object of this subcategory. The objects of the latter can thus be
  easily enumerated, since they are in correspondence with the generators of the
  terminal polygraph~$\polterm$, as plex liftings.
\end{rem}

\begin{rem}
  Like the familial representability observed in \Cref{rem:polconcrb-fam-rep},
  the conditions
  \Cref{thm:concrete-charact:prim-exists,thm:concrete-charact:prim-equal,thm:concrete-charact:prim-iso}
  proved above entails a familial representability for the functor
  $\poltoset{-}$ of \Cref{ex:pol-concrete}, which can be expressed as
  \[
    \bigsqcup_{i\in I} \Hom(U^i,-) \co \oPol \to \Set
    \zbox.
  \]
  Here, $I$ is a set of representatives $(U^i,g^i)$ of all plexes (considered up
  to isomorphism of $\Elt(\oPol)$). By taking $I$ to be a set of representatives
  of all plexes of dimension $k$ for some $k \in \N$, one get a familial
  representability of the functor $(-)_k \co \oPol \to \Set$.
\end{rem}

\begin{rem}
  In~\cite{araujo2022computads}, \citeauthor{araujo2022computads} relies on
  \cite[Proposition 5.14]{dean2022computads}, which gives sufficient conditions
  for a category to be a presheaf category on a given full subcategory. The
  difference with \cite[Theorem~4]{makkai2005word} is that the latter is
  relative to a concrete presheaf structure, and is able to characterize the
  shape category as a full subcategory of primitive elements.
\end{rem}

\section{Parametric adjunction and genericic factorization}
\label{sec:parametricity}

While \Cref{rem:polconcrb-fam-rep} asserts that the cells of free precategories
on polygraphs are instances of \eq{universal shapes} (\ie polyplexes), a more
conceptual and general syntactical result can be given, which encompasses both
the existence of those universal shapes and the Conduché property of free
functors. This result relies on the existence of a parametric adjunction and an
associated generic factorization for the free functor $\freecat-$. Parametric
adjunctions and generic factorizations appear frequently in the context of
algebraic higher
category~\cite{street2000petit,weber2004generic,henry2018regular}: for example,
the free \ocat monad functor on globular sets is parametric right adjoint, and
has an associated generic factorization. While the classical parametric right
adjoints are monad functors on presheaf categories (for which characterization
criteria have been developed, for example~\cite[Theorem
2.13]{weber2007familial}), the unusual fact here is that the parametric right
adjoint $\freecat-$ is a left adjoint, whose codomain is not a presheaf
category, but the category of $n$\precategories: for us, this fact reflects and
summarize the good syntactical properties of the theory of precategories.

\newcommand\defslicecat[1][]{\ifempty{#1}{\oPCat}{\nPCat {#1}} / \freecat\polterm}%
While parametric adjunctions can easily be deduced from familial
representability properties (like \Cref{rem:polconcrb-fam-rep}) in a presheaf
setting (see \cite[Proposition 2.10]{weber2007familial}), there is no direct
criterion in our setting, so that we have to show the parametric
representability \eq{by hand}: we need to show that the functor
$\freecat-_\polterm \co \oPol \to \defslicecat$ is a right adjoint, where
$\defslicecat$ is the slice category of $\oPCat$ over the free precategory on
the terminal polygraph $\polterm$, and $\freecat-_\polterm$ the functor induced
by $\freecat-$. Since both $\oPol$ and $\defslicecat$ are locally presentable
categories, and that $\freecat-$ is a left adjoint, we are only required to show
that $\freecat-_\polterm$ preserves limits (see \cite[Theorem
1.66]{adamek1994locally}). Since $\oPol$ has a terminal object and the
computation of limits in $\defslicecat$ amounts to the computation of a
connected limit in $\oPCat$, we simply need to show that connected limits are
preserved, recovering \cite[Theorem 2.13]{weber2007familial} in our context. In
the following, given $k,n \in \N$, we write $D^{k,n}$, or simply $D^k$ for the
free $n$\precategory with one non-identity $k$\cell.
\begin{prop}
  \label{prop:freecat-pres-conn-limits}
  Given $n \in \Ninf$, the functor $\freecat - \co \nPol n \to \nPCat n$
  preserves connected limits.
\end{prop}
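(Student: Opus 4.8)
The plan is to reduce the preservation of connected limits to the special case of products and equalizers of connected diagrams, and then to use the explicit polyplex machinery — in particular familial representability of the cell functors (\Cref{rem:polconcrb-fam-rep}) together with the Conduché property (\Cref{prop:conduche}) — to verify preservation directly on the underlying globular sets. Concretely, let $J$ be a connected category and $(\P_j)_{j \in J}$ a diagram in $\nPol n$ with limit $\P = \lim_j \P_j$; I want to show the canonical comparison $\freecat\P \to \lim_j \freecat{\P_j}$ is an isomorphism of $n$\precategories. Since the forgetful functor $\ccfgff_n \co \nPCat n \to \nGlob n$ is a right adjoint (\Cref{prop:pcatn-complete-cocomplete-ra}) and $\nGlob n$ has limits computed dimensionwise in $\Set$, it suffices to check that for each $k$ the map $\freecat\P_k \to \lim_j \freecat{(\P_j)}_k$ is a bijection. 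This is exactly the statement that the functor $\freecat{-}_k \co \nPol n \to \Set$ preserves connected limits.

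The key step is then to invoke the familial representability of $\freecat{-}_k$: by \Cref{rem:polconcrb-fam-rep}, $\freecat{-}_k \cong \bigsqcup_{i \in I_k} \Hom_{\nPol n}(U^i,-)$ where $I_k$ ranges over (isomorphism classes of) polyplexes of dimension $k$. Since each representable $\Hom(U^i,-)$ preserves all limits, and since a coproduct of functors preserves a given (co)limit whenever the indexing is ``stable'' under that limit, the only thing to check is that connected limits commute with this particular coproduct — equivalently, that for a connected diagram $(\P_j)$ and a cell $u = (u_j)_j \in \lim_j \freecat{(\P_j)}_k$, all the component cells $u_j$ have the \emph{same} polyplex shape $U^i$. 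This is where connectedness is essential: given an edge $j \to j'$ in $J$ inducing $\freecat{(\P_j)} \to \freecat{(\P_{j'})}$, the images $u_j$ and $u_{j'}$ are related by a free functor, and by \Cref{prop:primitive-lifting} and \Cref{prop:polyplex-lifting-unique} a polyplex lifting of $u_j$ maps to a polyplex lifting of $u_{j'}$, so they share the same shape; propagating this equality along the connected graph $J$ shows the shape is globally constant, say $U^i$, and the family of polyplex liftings $(U^i, u_j) \to (\P_j, u_j)$ assembles — using the uniqueness from \Cref{prop:polyplex-lifting-unique} and \Cref{lem:principal-parallel-eq} to make the choices coherent — into a single morphism $U^i \to \P$, i.e.\ an element of $\Hom(U^i,\P) \subseteq \freecat\P_k$ mapping to $u$. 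Injectivity of the comparison map follows similarly: two cells of $\freecat\P_k$ with the same image in every $\freecat{(\P_j)}_k$ have the same polyplex shape and, by \Cref{lem:principal-parallel-eq}, induce the same classifying morphism out of that polyplex, hence are equal.

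The main obstacle I anticipate is the coherence of choices in assembling the cone of polyplex liftings into a genuine morphism into the limit $\P$. The naive choice of a polyplex lifting $F_j \co (U^i,u^i) \to (\P_j,u_j)$ for each $j$ need not a priori be compatible with the transition maps of the diagram; one must use \Cref{prop:polyplex-lifting-unique} to replace arbitrary liftings by canonically-matched ones, and then \Cref{lem:principal-parallel-eq} (which says morphisms out of a principal element are unique) to conclude that the resulting square commutes on the nose, so that the $F_j$ form a cone $U^i \Rightarrow (\P_j)$ and factor through $\P = \lim_j \P_j$. For $n = \omega$ one also has to be slightly careful that ``dimension $k$ polyplex'' behaves well under truncation, but this is handled by the truncability of the monad already recorded in \Cref{sec:precat}. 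A cleaner alternative, which I would mention, is to derive the result abstractly from \Cref{thm:opol-concr-ps}: since $\oPol$ is a presheaf category and $\freecat{-}$ is a left adjoint, preservation of connected limits is equivalent (by the parametric-right-adjoint criterion of \cite[Theorem 2.13]{weber2007familial}) to the familial representability of \Cref{rem:polconcrb-fam-rep}, but since that criterion is stated for monad functors between presheaf categories and here the codomain is $\nPCat n$, one still needs the hands-on verification above.
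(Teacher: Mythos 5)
Your proof is correct and follows essentially the same route as the paper: reduce to a statement about sets via a conservative, limit-preserving forgetful functor, then use the familial representability of the free-cell functors by polyplexes (\Cref{rem:polconcrb-fam-rep}) and the fact that connected limits commute with coproducts in $\Set$. The only cosmetic differences are that the paper reduces via $\cattoset-$ (conservative and itself familially representable by the $D^k$'s) rather than via $\ccfgff_n$ dimensionwise, and cites Carboni--Johnstone for the connected-limit/coproduct commutation that you re-derive by hand.
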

\begin{proof}
  First note that the functor $\cattoset- \co \nPCat n \to \Set$ is
  conservative; it is moreover familially representable by the $D^k$'s for $k <
  n+1$ (a $k$\cell of an $n$\precategory $C$ is the same thing as a functor $D^k
  \to C$) and thus preserves connected limits by \cite[Theorem
  2.5]{carboni1995connected} and \cite[Corollary 2.45]{adamek1994locally}. Since
  $\nPCat n$ is complete, it is sufficient to show that the functor
  $\polconcrb-\co \nPol n \to \Set$ preserves connected limits. But this functor
  is familially representable by \Cref{rem:polconcrb-fam-rep}, so that it
  preserves connected limits by \cite[Theorem 2.5]{carboni1995connected}.
\end{proof}
\noindent By the argument exposed earlier, we can conclude that:
\begin{theo}
  Given $n \in \Ninf$, the functor $\freecat-_\polterm \co \nPol n \to \defslicecat[n]$ is a right
  adjoint. In other words, $\freecat-$ is a parametric right adjoint.
\end{theo}
As a consequence, we have a generic factorization for the functor $\freecat-$.
We recall from~\cite{weber2004generic} the notion of generic morphism in the
present case: given $C \in \nPCat n$ and $\P \in \nPol n$, a morphism $F \co C
\to \freecat\P$ is \emph{generic} when, for any commutative square of the form
\[
  \begin{tikzcd}
    C
    \ar[r,"G"]
    \ar[d,"F"']
    &
    \freecat\Q
    \ar[d,"\freecat H"]
    \\
    \freecat\P
    \ar[r,"\freecat K"']
    \ar[ur,dotted,"\freecat L"description]
    &
    \freecat\R
  \end{tikzcd}
\]
for some $\Q,\R \in \nPol n$, $G \co C \to \freecat\Q$ in $\nPCat n$, $H \co \Q
\to \R$ and $K \co \P \to \R$ in $\nPol n$, there exists a unique $L \co \P \to
\Q$ such that $G = \freecat L \circ F$ and $K = H \circ L$. Now, given a
morphism $F \co C \to \freecat \P$, a \emph{generic factorization} is a
decomposition of $F$ as $\freecat H \circ G$ for some $\Q \in \nPol n$, some
generic $G \co C \to \freecat\Q$ and $H \co \Q \to \P \in \nPol n$. By the
universal property of generic morphisms, such a decomposition is unique up to an
isomorphism $\Q \to \Q'$.

\begin{coro}
  Given $n\in \Ninf$, $C \in \nPCat n$ and $\P \in \nPol n$, every $F \co C \to
  \freecat\P \in \nPCat n$ admits a generic factorization.
\end{coro}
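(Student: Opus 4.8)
The plan is to read the generic factorization off the parametric right adjointness of $\freecat-$ just established, following the standard mechanism of~\cite{weber2004generic}. Write $\Lambda \co \defslicecat[n] \to \nPol n$ for the left adjoint of $\freecat-_\polterm$, with unit $\eta$. Given $F \co C \to \freecat\P$, the terminal morphism $\toterm[\P] \co \P \to \polterm$ makes $\phi_F := \freecat{\toterm[\P]} \circ F \co C \to \freecat\polterm$ an object of $\defslicecat[n]$, and $F$ is then a morphism $\phi_F \to \freecat-_\polterm(\P)$ in that slice, the relevant triangle commuting on the nose. Set $\Q := \Lambda(\phi_F)$ and let $G \co C \to \freecat\Q$ be the morphism of $\nPCat n$ underlying the unit component $\eta_{\phi_F}$. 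Applying the universal property of $\eta_{\phi_F}$ to the slice morphism $F$ yields a unique $H \co \Q \to \P$ in $\nPol n$ with $\freecat H \circ G = F$. This $\freecat H \circ G$ is the candidate factorization, so only the genericity of $G$ remains to be checked.

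For genericity I would take an instance of the lifting problem: morphisms $G' \co C \to \freecat{\Q'}$, $H' \co \Q' \to \R'$, $K' \co \Q \to \R'$ with $\freecat{H'} \circ G' = \freecat{K'} \circ G$. The crucial observation is that $G'$ is automatically a morphism $\phi_F \to \freecat-_\polterm(\Q')$ in $\defslicecat[n]$: composing the square with $\freecat{\toterm[\R']}$ and invoking uniqueness of morphisms into $\polterm$ gives $\freecat{\toterm[\Q']} \circ G' = \freecat{\toterm[\Q]} \circ G = \phi_F$. The universal property of $\eta_{\phi_F} = G$ then supplies a unique $L \co \Q \to \Q'$ with $\freecat L \circ G = G'$. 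To obtain $K' = H' \circ L$, I would compare the two maps $\Q \to \R'$: precomposing each with $G$ gives the same morphism, since $\freecat{(H' \circ L)} \circ G = \freecat{H'} \circ G' = \freecat{K'} \circ G$, so they coincide by the uniqueness part of the adjunction. Uniqueness of $L$ follows from the same universal property. This establishes the existence of a generic factorization; its uniqueness up to an isomorphism $\Q \to \Q'$ is then a formal consequence of the universal property of generic morphisms.

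I do not expect a genuine obstacle: once the parametric right adjoint is in hand the argument is a diagram chase. The only point requiring care is the bookkeeping with the slice category $\defslicecat[n]$ — chiefly the recurrent use of the fact that, $\polterm$ being terminal in $\nPol n$, any morphism of $\nPCat n$ between free precategories is automatically a morphism over $\freecat\polterm$ once both sides are equipped with their canonical structure maps, so that $F$, $G'$, $K'$ and $H' \circ L$ all become slice morphisms for free. A purely expository alternative would be to simply cite~\cite[Section~2]{weber2004generic}, where generic factorizations are produced from parametric right adjoints in full generality and which applies here verbatim.
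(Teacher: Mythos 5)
Your argument is correct and is essentially the paper's proof: the paper simply cites \cite[Proposition 2.6]{weber2007familial}, which derives generic factorizations from the parametric right adjointness of $\freecat-_\polterm$, and your write-up just inlines the standard verification of that proposition (taking $G$ to be the unit of $\Lambda\dashv\freecat-_\polterm$ and checking its genericity by adjoint-transpose uniqueness). The only nitpick is the closing remark that morphisms between free precategories are ``automatically'' over $\freecat\polterm$ — that is only true for morphisms of the form $\freecat M$, not for arbitrary maps such as $G'\co C\to\freecat{\Q'}$ — but your actual computation deduces $\freecat{\toterm[\Q']}\circ G'=\phi_F$ correctly from the commuting square, so nothing is lost.
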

\begin{proof}
  By \cite[Proposition 2.6]{weber2007familial}, the existence of generic
  factorizations follows from the fact that $\freecat-_\polterm$ is a parametric
  right adjoint.
\end{proof}
\noindent Some generic morphisms are easy to identify:
\begin{prop}
  \label{prop:polyplex-generic}
  Given $n \in \N$ and $\P \in \oPol$, writing $u$ for the non-identity $n$\cell
  of $D^n$, a functor $F \co D^n \to \freecat\P$ is generic if and only if
  $(\P,F(u))$ is a polyplex.
\end{prop}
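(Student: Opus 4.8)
The plan is to reformulate both sides of the equivalence as statements about the category of elements $\Elt(\oPolb)$, using the identification $\oPCat(D^n,C) \cong C_n$ under which a functor out of $D^n$ is the same as the $n$\cell of $C$ it selects. Writing $w$ for the $n$\cell $F(u)$ of $\freecat\P$, so that $(\P,w) \in \Elt(\oPolb)$, I would first observe that a commutative square of the shape appearing in the definition of genericity — with $\Q,\R \in \oPol$, a functor $G \co D^n \to \freecat\Q$ selecting a cell $v \in \freecat\Q_n$, and morphisms $H \co \Q \to \R$ and $K \co \P \to \R$ of $\oPol$ — is the same data as a cospan $(\P,w) \to (\R,\freecat K(w)) \leftarrow (\Q,v)$ in $\Elt(\oPolb)$ with legs $K$ and $H$, and that a diagonal filler $L \co \P \to \Q$ with $\freecat L \circ F = G$ and $H \circ L = K$ is precisely a morphism of elements $L \co (\P,w) \to (\Q,v)$ satisfying $H \circ L = K$. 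Thus $F$ is generic if and only if every such cospan admits a unique such $L$.

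For the \emph{if} direction I would assume $(\P,w)$ is a polyplex and fix such a cospan. Uniqueness of $L$ is then immediate from \Cref{lem:principal-parallel-eq}, since $(\P,w)$ is principal. For existence, $K$ exhibits the polyplex $(\P,w)$ as a polyplex lifting of $(\R,\freecat K(w))$; choosing, by \Cref{prop:primitive-lifting}, a polyplex lifting $P \co (U,\bar u) \to (\Q,v)$, the composite $H \circ P$ exhibits $(U,\bar u)$ as a second polyplex lifting of $(\R,\freecat K(w))$ (here one uses $\freecat H(v) = \freecat K(w)$). By \Cref{prop:polyplex-lifting-unique} there is an isomorphism of elements $\Theta \co (\P,w) \to (U,\bar u)$ with $(H \circ P) \circ \Theta = K$, and then $L = P \circ \Theta$ is a morphism of elements $(\P,w) \to (\Q,v)$ with $H \circ L = K$, so $F$ is generic.

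For the \emph{only if} direction I would assume $F$ generic and, by \Cref{prop:primitive-lifting}, fix a polyplex lifting $P \co (U,\bar u) \to (\P,w)$; it then suffices to show $P$ is an isomorphism, for then $(\P,w)$ is isomorphic in $\Elt(\oPolb)$ to the polyplex $(U,\bar u)$, hence is itself a polyplex. To this end, I would apply genericity to the square whose top edge selects $\bar u$ in $\freecat U$, whose right edge is $\freecat P$, and whose bottom edge is the identity of $\freecat\P$ — this square commutes precisely because $\freecat P(\bar u) = w$, which is exactly what it means for $P$ to be a polyplex lifting of $(\P,w)$. Genericity then produces a morphism of elements $L \co (\P,w) \to (U,\bar u)$ with $P \circ L = \unit\P$, and then $L \circ P$ and $\unit U$ are parallel endomorphisms of the principal element $(U,\bar u)$, hence equal by \Cref{lem:principal-parallel-eq}; so $P$ is an isomorphism.

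I do not anticipate a genuine obstacle: the whole argument is bookkeeping around the equivalence $\oPCat(D^n,-) \cong (-)_n$ together with the existence (\Cref{prop:primitive-lifting}) and essential uniqueness (\Cref{prop:polyplex-lifting-unique}) of polyplex liftings. The one point demanding care is that the definition of genericity only quantifies over squares whose bottom and right legs are of the form $\freecat K$ and $\freecat H$ for polygraph morphisms; this is exactly the shape produced by the polyplex-lifting machinery in the \emph{if} direction, and exactly the shape — with legs coming from $P$ and $\unit\P$ under $\freecat-$ — that one must feed to genericity in the \emph{only if} direction.
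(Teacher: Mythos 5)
Your proof is correct and follows essentially the same route as the paper: both directions rest on the existence of polyplex liftings (\Cref{prop:primitive-lifting}), their essential uniqueness (\Cref{prop:polyplex-lifting-unique}), and \Cref{lem:principal-parallel-eq}, applied to the same squares. The only cosmetic difference is at the end of the ``only if'' direction, where you conclude that $P$ is invertible by checking $L \circ P = \unit U$ via \Cref{lem:principal-parallel-eq}, whereas the paper observes that $L$ is a (split) monomorphism into the principal element $(\Q,v)$ and hence an isomorphism.
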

\begin{proof}
  We start with the first implication. Let $H \co (\Q,v) \to (\P,F(u))$ be a
  polyplex lifting of $(\P,F(u))$. Then, writting $G \co D^n \to \freecat\Q$
  sending $u$ to $v$, we have $\freecat H \circ G = \freecat{(\unit{\P})} \circ
  F$. Thus, there exists a unique lifting $L \co \P \to \Q$ such that $\freecat
  L \circ F = G$ and $H \circ L = \unit{\P}$. In particular, we have that
  $\freecat L(F(u)) = v$ and $L$ is a monomorphism. Thus, since $(\Q,v)$ is
  principal, $L \co (\P,F(u)) \to (\Q,v)$ is an isomorphism.

  Conversely, let 
  \[
    \begin{tikzcd}
      D^n
      \ar[r,"G"]
      \ar[d,"F"']
      &
      \freecat\Q
      \ar[d,"\freecat H"]
      \\
      \freecat\P
      \ar[r,"\freecat K"']
      &
      \freecat\R
    \end{tikzcd}
  \]
  be a commutative square where $(\P,F(u))$ is assumed to be a polyplex.
  Consider a polyplex lifting $L \co (\bar \P,\bar u) \to (\Q,G(u))$. By
  applying $\freecat H$, $(\bar \P,\bar u)$ is a polyplex lifting of $\freecat
  H(G(u)) = \freecat K(F(u))$ and so is $(\P,F(u))$. By
  \Cref{prop:polyplex-lifting-unique}, we may assume $(\bar \P,\bar u) =
  (\P,F(u))$ with $H \circ L = K$. Moreover, since $\freecat L(F(u)) = G(u)$, we
  have $L \circ F = G$ by freeness of $D^n$. Finally, the unicity of the lifting
  $L$ of the above square is a consequence of \Cref{lem:principal-parallel-eq}.
\end{proof}
\begin{rem}
  In a related manner, given $n \in \N$ and $v \in \freecat\polterm_n$, the
  image of $D^n \xto{v} \freecat\polterm$, seen as an object of $\defslicecat$,
  by a left adjoint to $\freecat-_\polterm$ is the underlying polygraph of a
  polyplex lifting of $v$.
\end{rem}
\begin{rem}
  The above generic factorization can be seen as a stronger version of
  \Cref{prop:conduche}. Indeed, given $k,l > 0$ and $i = \min(k,l) - 1$, there
  exists a polygraph $\D^{k,l}$ such that $\freecat{(\D^{k,l})}$ is the free
  \opcat with one $k$\cell $u_1$ and one $l$\cell $u_2$, such that $\ctgt_i(u_1)
  = \csrc_i(u_2)$. The construction of $\D^{k,l}$ can be seen to induce a
  polyplex $(\D^{k,l},u_1\comp_i u_2)$ by \Cref{lem:polyplex-comp}. Writting $n$
  for $\max(k,l)$ and $F^{k,l} \co D^n \to \freecat{(\D^{k,l})}$ for the functor
  sending the non-trivial $n$\cell of $D^n$ to $u_1\comp_i u_2$,
  \Cref{prop:conduche} amounts to observe that the $F^{k,l}$'s are generic by
  \Cref{prop:polyplex-generic}.
\end{rem}


\section{Toward homotopical properties of precategories}
\label{sec:folk}

In this section, we report on failed attempts to study homotopical properties of
categories, leaving open questions for future works.

\paragraph{A folk model structure on precategories?}
In the setting of strict $n$-categories, the usefulness of polygraphs can be
explained by the facts that they are free objects such that every category
admits a description by such an object, and any two descriptions are suitably
equivalent. In more precise and modern terms, this was formalized by Lafont,
Métayer and Worytkiewicz~\cite{lafont2010folk}, who constructed a structure of
model category on the category $\nCat\omega$ of strict $\omega$-categories, in
which weak equivalences are the expected equivalences of $\omega$-categories and
cofibrant objects are $\omega$-categories freely generated by polygraphs.
One could expect that we could perform a similar construction on precategories,
and construct a model structure where weak equivalences are the expected ones
and cofibrant objects are polygraphs in the sense of this article. Whether this
is possible or not is left as an open question, but explain here that a direct
adaptation of the proof of~\cite{lafont2010folk} does not go through easily.

Let us first introduce some terminology. Given an \opcat~$C$, we make the
following coinductive mutual definitions:
\begin{itemize}
\item two cells $x,y \in C$ of the same dimension are \textit{equivalent},
  denoted $x\sim y$, when there exists an equivalence $u : x \to y$;
\item a cell $u : x \to y$  is an \textit{equivalence} when there exists $\bar
  u : y \to x$ such that $u \comp \bar u \sim \unit x$ and $\bar u \comp u
  \sim \unit y$.
\end{itemize}
We could then have hope for the following definition of weak equivalences. Given
an \oprefunctor $F : C \to D$, $F$ is a \textit{weak equivalence} when it is
``essentially surjective in every dimension'', \ie
\begin{itemize}
\item for every $0$\cell $y \in D_0$, there exists $x \in C_0$ such that
  $Fx \sim y$,
\item for every pair of parallel cells $u,u' \in C$ and cell
  $\bar v : F(u) \to F(u')$, there exists $v \in C$ such that
  $F(v) \sim \bar v$.
\end{itemize}

The above definitions directly generalize the ones for strict categories. The
construction of the folk model structure on strict \ocats then requires a
\emph{weak division property}~\cite[Lemma~5]{lafont2010folk}, which the authors
present as being ``crucial''. The direct generalization of it in the setting of
precategories is as follows:

\begin{property}[Weak division]
  \label{weak-division}
  Given an \opcat $C$ and an equivalence $u : x \to y \in C_1$, for any
  $1$\cells $s,t : y \to z$ and for any $2$\cell
  $w : u \comp_0 s \To u \comp_0 t$,
  \begin{enumerate}[(a)]
  \item \label{wd-e} there is a $2$\cell $v : s \To t$ such that
    $u \comp_0 v \sim w$,
    \[
      \begin{tikzcd}[row sep=1ex]
        &y\ar[dr,bend left,"s"]\\
        x\ar[ur,bend left,"u"]\ar[dr,bend right,"u"']\ar[rr,phantom,"w\Downarrow"]&&z\\
        &y\ar[ur,bend right,"t"']
      \end{tikzcd}
      \qquad\sim\qquad
      \begin{tikzcd}
        x\ar[r,"u"]&y\ar[r,bend left=50,"s"]\ar[r,bend right=50,"t"']\ar[r,phantom,"v\Downarrow"]&z
      \end{tikzcd}
    \]
  \item \label{wd-u} for any $2$\cells $v,v':s\To t$ such that
    $u\comp_0v\sim w\sim u\comp_0 v'$ we have $v\sim v'$.
  \end{enumerate}
\end{property}

\noindent
We would also need a generalization of the above property for $n$-cells, but we
will see that the proof of the stated property in dimension~1 already fails to
generalize from strict categories to precategories. Consider cells $u$ and $w$
as in the above property, with $u$ reversible, and let us try to define the
cell~$v$. Writing $r:\bar u\comp_0 u\to\id x$ for the $2$-cell witnessing that
$u$ is reversible, following~\cite{lafont2010folk}, we are tempted to define~$v$
as
\[
  v=\ol{(r\comp_0 s)}\comp_1(\bar u\comp_0w)\comp_1(r\comp_0t)
\]
If we picture $r$ and $w$ as on the left, $v$ can be pictured as on the right:
\begin{align*}
  r&=\satex{r}
  &
  w&=\satex{w}
  &
  v&=\satex{v}
\end{align*}
In particular, in the case where $w$ is of the form $w=u\comp_0 v'$ for some
$2$-cell $v':s\To t$, we should have $v\sim v'$ by \ref{wd-u}. In the case of
strict categories, this holds thanks to the interchange law:
\[
  \begin{array}{r@{\ =\ }c@{\ =\ }c@{\ \sim\ }c}
    v
    &
    \ol{(r\comp_0 s)}\comp_1(\bar u\comp_0u\comp_0 v')\comp_1(r\comp_0t)
    &
    \ol{(r\comp_0 s)}\comp_1(r\comp_0s)\comp_1v'
    &
    v'
    \\
    \satex{ex1a}
    &
    \satex{ex1b}
    &
    \satex{ex1c}
    &
    \satex{ex1d}
  \end{array}
\]
However, in the case of precategories there is no reason why this should hold.
Of course, this does not directly imply that \cref{weak-division} does not hold
or that there is no suitable model structure on precategories, but more work is
required than a mere adaptation of~\cite{lafont2010folk}. The above also suggests
that it could be interesting to investigate structures ``in between'' precategories
and strict categories, where the interchange law is only required to hold for
some morphisms (such as $r$ in the above example).


\paragraph{A cone construction?}
Another homotopy-related question one might ask is whether the underlying shape
category of the presheaf category of polygraphs of precategories is able to
model homotopy types. A now standard approach to get a positive answer is to
show that this shape category is a \emph{weak test
  category}~\cite{grothendieck2021pursuing,maltsiniotis2005theorie}, \ie a
category $C$ whose presheaf category $\hat C$ can be equipped with a canonical
class of weak equivalences $\cW$, such that the induced localization $\hat
C[\finv\cW]$ is canonically isomorphic to the homotopy category~$\Hot$, so that,
in particular, $\hat C$ models all homotopy types.

A common way to show that a category is a weak test category is to exhibit a
\emph{separating décalage}~\cite{maltsiniotis2005theorie} on this category.
Formally, a \emph{décalage} on a catégorie $C$ is given by a functor $D : C
\to C$ together with natural transformations
\[
  \begin{tikzcd}
    1_C
    \ar[r,"\alpha"]
    &
    D
    &
    \ar[l,"\beta"']
    \top
  \end{tikzcd}
\]
where $\top$ is an object of $C$ seen as a constant functor. Such a décalage is
\emph{separating} when we moreover have that
\begin{enumerate}[(a)]
\item for every $c \in C_0$, the arrow $\alpha_c : c \to D(c)$ is a monomorphism,
\item $\alpha$ is \emph{cartesian}: for every morphism $f : c \to c' \in C$, the
  diagram
  \[
    \begin{tikzcd}
      c
      \ar[r,"\alpha_c"]
      \ar[d,"f"']
      &
      D(c)
      \ar[d,"D(f)"]
      \\
      c'
      \ar[r,"\alpha_{c'}"']
      &
      D(c')
    \end{tikzcd}
  \]
  is a pullback,
\item for every $c \in C_0$, there is no commutative diagram of the form
  \[
    \begin{tikzcd}
      c'
      \ar[r,"g"]
      \ar[d,"f"']
      &
      \top
      \ar[d,"\beta_c"]
      \\
      c
      \ar[r,"\alpha_c"']
      &
      D(c)
    \end{tikzcd}
  \]
  for some $c' \in C_0$ and $f\co c' \to c$ and $g\co c' \to \top$ in $C_1$.
\end{enumerate}
Following Henry's line of proof for the case of regular
plexes~\cite{henry2018regular}, a promising choice of décalage in a polygraphic
setting is the one where $D$ is \eq{cone construction} functor, also called
\emph{expansion functor}: starting from a polygraph $\P$, this functor adds to
$\P$ a $0$\generator $o$, a $1$\generator $x \to o$ for each $x \in \P_0$, and
more generally an $(i{+}1)$\generator for each $i$\generator of $\P$, so that
$D\P$ appears as a combinatorial description of a cone over the \eq{space} defined
by $\P$. Then, continuing the definition of a décalage, one can take $\alpha$ to
be the canonical embedding of a polygraph into the base of its cone, $\top$ to
be the polygraph with only one $0$\generator~$o$, and $\beta$ to be the marking
of $o$ as the top of each constructed cone.


While Henry~\cite{henry2018regular} used the join of strict
categories~\cite{ara2016joint} to define the expansion functor on regular
plexes, a more direct description of this construction was used by
\citeauthor*{ara2022orientals}~\cite{ara2022orientals} in the case of strict
categories that we unsuccessfully tried to adapt to precategories. In the
following, we describe this attempt, hoping it can still benefit other settings.
Write $\oPCatp$ for the category of \emph{pointed \opcats}, that is, the
category whose objects are the pairs $(C,o)$ where $o \in C_0$ and the morphisms
$(C,o_C) \to (D,o_D)$ are the functor $F\co C \to D$ such that $F(o_C) = o_D$.
We have an evident adjunction
\begin{equation}
  \label{eq:cone-first-adj}
  \begin{tikzcd}
    \oPCat
    \ar[r,bend left,"\pointedfreef-"]
    \phar[r,"\bot"]
    &
    \oPCatp
    \ar[l,bend left,"\pointedfgf"]
  \end{tikzcd}
\end{equation}
where $\pointedfgf$ simply forgets the pointed $0$\cell $o$. In order to define
an expansion functor on precategories, one wants to introduce a functor
\[
  \conef \co \oPCatp \to \oPCatp
\]
such that $\conef (C,o)$ is the \opcat of \emph{$i$\cones} on $(C,o)$ for $i \in
\N$: a $0$\cone is some \eq{base} $0$\cell $\cb x \in C$ together with some
$1$\cell $\cc x \co \cb x \to o$ of $C$, a $1$\cone between $(\cb x,\cc x)$ and
$(\cb y,\cc y)$ is a \eq{base} $1$\cell $\cb f \co x \to y$ and $\cc f \co \cb f
\comp_0 \cc y \To \cc x$, and so on. There is then a natural embedding
$\gamma_{(C,o)} \co \conef (C,o) \to (C,o)$, mapping every $i$\cone to its
\eq{base} $i$\cell. If such a functor exists, one could then define the category
of \emph{conic precategories} $\oPCatc$ whose objects are the triples $(C,o,\sigma)$,
where $(C,o)$ is a pointed \opcat and $\sigma$ is a section of $\gamma_{(C,o)}$
satisfying adequate degeneracy conditions (see \cite[Definition
2.2.1]{ara2022orientals}), and whose morphisms are the ones of $\oPCatp$ which
adequately commute with the sections. In other words, an object of $\oPCatc$ is
a pointed \opcat with the data of a compatible $i$\cone for every $i$\cell,
satisfying degeneracy conditions. The forgetful functor $\conicfgf\co\oPCatc \to
\oPCatp$ should then admit a left adjoint, so that we get an adjunction
\begin{equation}
  \label{eq:cone-second-adj}
  \begin{tikzcd}
    \oPCatp
    \ar[r,bend left,"\conicfreef-",pos=0.54]
    \phar[r,"\bot"]
    &
    \oPCatc
    \ar[l,bend left,"\conicfgf"]
  \end{tikzcd}
  \zbox.
\end{equation}
The expansion functor is then the functor
\[
  \tilde D = \pointedfgf \circ \conicfgf \circ \conicfreef- \circ \pointedfreefpar-
  \co \oPCat \to \oPCat
\]
which is the underlying functor of the monad of the composition of the two
adjunctions \eqref{eq:cone-first-adj} and~\eqref{eq:cone-second-adj}. Then, one
could show that this functor restricts well to polygraphs (just like for the case
of strict categories~\cite{ara2022orientals}), so that we get $D \co \oPol \to
\oPol$, and then show that $D$ is the underlying functor of a separating
décalage.

Sadly, the definition of $\conef$ does not go through for precategories. Given a
pointed \opcat $(C,o)$, even though one can follow the concrete definition of
\cite{ara2022orientals} to get a globular set $\conef(C,o)$ equipped with
precategorical compositions operations, one can show that the latter do not
satisfy axiom~\Cref{precat:distrib} of precategories in general: the lack of
interchange law for precategories is to blame here.

While it is not formally excluded that the shape category of plexes is a weak
test category, the fact that it does not admit an expansion functor while the
one of regular plexes does is already a bad sign which suggests, in addition to
the difficulty to define a notion of weak equivalences with good properties (as
discussed at the beginning of this section), that \eq{bare} precategories are
not an adequate tool for homotopical purposes (but that does not prevent them to
be used to define other adequate tools, like Gray
categories~\cite{forest2021rewriting}). Maybe this could be linked to the fact
that the underlying operad of precategories is not contractile, and should be
better understood in future work.

\paragraph*{Acknowledgments.}
The authors would like to thank Manuel Araújo for the discussions about their
shared interest on precategories and their applications. They also would like to
thank Léonard Guetta for his useful comments about this work, in particular on
the use of strict Conduché functors in a precategorical setting.

\printbibliography
\end{document}